\numberwithin{equation}{section}
\newtheorem{theorem}{Theorem}[section]
\newtheorem{prop}[theorem]{Proposition}
\newtheorem{lem}[theorem]{Lemma}
\newtheorem{cor}[theorem]{Corollary}
\newtheorem{rem}[theorem]{Remark}
\theoremstyle{remark}
\newcommand{\R}{\mathbb{R}}
\newcommand{\N}{\mathbb{N}}
\newcommand{\F}{\mathbb{F}}
\newcommand{\Q}{\mathbb{Q}}
\newcommand{\B}{\mathcal{B}}
\newcommand{\G}{\mathbb{G}}
\renewcommand{\S}{\mathcal{S}}
\renewcommand{\bar}{\overline}
\renewcommand{\hat}{\widehat}
\author[]{Ricardo Alonso}
\address{$^1$Texas A\&M, Science Department, Education City, Doha, Qatar}
\address{$^2$PUC-Rio, Departamento de Matem\'atica, Rio de Janeiro, Brazil}
\email{\sl ricardo.alonso@qatar.tamu.edu }
\author[]{Hajer Orf}
\address{Texas A\&M, Science Department, Education City, Doha, Qatar}
\email{\sl hajer.orf@qatar.tamu.edu }
\title[Moments and integrability of monatomic gas mixtures with long range]
{Statistical moments and integrability properties of monatomic gas mixtures with long range interactions}
\begin{document}

\begin{abstract}
This document presents \textit{a priori} estimates related to statistical moments and integrability properties for solutions of systems of monatomic gas mixtures modelled with the homogeneous Boltzmann equation with long range interactions for hard potentials.  We detail the conditions for the generation and propagation of polynomial and exponential moments, and the integrability in Lebesgue spaces.
\end{abstract}

%\subjclass[2000]{35-xx, 35Bxx, 35Lxx}
%\keywords{Boltzmann Equations; }

%@@@@@@@@@@@@@@@@@@@@@@@@@@@@@@@@@@@@%@@@@@@@@@@@@@@@@@@@@@@@@@@@@@@@@@@@@%@@@@@@@@@@@@@@
\maketitle

%@@@@@@@@@@@@@@@@@@@@@@@@@@@@@@@@@@@@%@@@@@@@@@@@@@@@@@@@@@@@@@@@@@@@@@@@@%@@@@@@@@@@@@@@@

\section{Introduction}
In this paper we study \textit{a priori} estimates for the homogeneous Boltzmann system describing multi-component monatomic gas mixtures for binary interactions in three dimension.  We focus in long range and hard interactions using techniques that have been developed recently for the scalar equation combining recent result and notation presented in the works \cite{GP,CGP} for the case of cutoff angular kernels.  The Cauchy problem will be addressed elsewhere.\\

The method of proof follows a classical approach where the generation and propagation of polynomial and exponential moments use, at its core, a Povzner inequality associated to the binary interaction of different monatomic species.  The Povzner inequality for monatomic species is similar to the scalar version, yet less symmetric.  This loss of symmetry poses important difficulties that modify the classical mathematical treatment.   The mathematical analysis at the level of moments, polynomial and exponential, follows ideas from refined versions for the scalar case given in \cite{GP,TAGP,NF}, however, it is worthwhile to mention that the moment analysis requires considering the system as a whole and not component by component independently.  In other words, a global moment inequality has to be made for the system involving all species at once.  This is particularly challenging for the case of exponential moment analysis which turns out to be subtle.\\

In a second stage, once the statistical moments are understood, a theory of generation and propagation of higher integrability norms is performed.  We implement an approach given in \cite{RA} for the treatment of $L^{p}$-norm generation in the range $p\in(1,\infty]$.  Interestingly, for higher integrability generation and propagation, each species of the gas mixture can be considered independently by interpreting the rest of the components as given.  This is related to the fact that the type of scattering considered in this document is only forward; in such a case only statistical moments are needed to prove higher Lebesgue integrability generation and propagation.\\   

Let us mention here that regarding the homogeneous scalar equation, the study of moments have been addressed for the long range regime with hard and Maxwell potentials interactions in references such as \cite{Wennberg,LuMouhot, TAGP, NF,PaTa}.  The generation and propagation of higher integrability for the scalar problem have been addressed in \cite{AMUXY,RA,RA1,DM} using methods initially presented in \cite{ADVW,AMUXY}.  Boundedness of solutions for the homogeneous equation has been studied in \cite{RA,Silvestre,Taskovic} using different approaches and including polynomial and exponential weights, a probabilistic numerical method can be found in \cite{FM}.  In addition, the gas mixture for monatomic components has been addressed in the so-called cutoff with hard interactions regime in references such as \cite{BDTP,GP,CGP}, the spatially inhomogeneous setting near thermal equilibrium is addressed in \cite{BriantDaus}, the BGK approximation for multi-species can be found in \cite{BC}, hydrodynamics expansions can be referred to \cite{BBBD}, chemical reactions of multi-components is addressed in \cite{DMS}, fine mathematical properties of the linear multi-component model are addressed in \cite{DJMZ}.\\

The paper is organised in the following way: In section 2, we give some notations and preliminaries and state the main results  of this manuscript. Then, in section 3, we prove Theorem \ref{Momlem}, \ref{Mom} and give some preliminary Lemma including Povzner inequality that we need to prove our first result. Section 4, is devoted to prove the uniform coercive estimate and our two last Theorem \ref{theo2} and \ref{CorLp}. Finally, in section 5, we give some general Lemma which help us to prove ours results.\\

\noindent
\textbf{Acknowledgments.} R. Alonso gratefully acknowledges the support from Conselho Nacional de Desenvolvimento Cient\'ifico e Tecnol\'ogico (CNPq), grant Bolsa de Produtividade em Pesquisa (303325/2019-4).
     
\section{Preliminaries and Main Results}
\subsection{Preliminaries}
Let us consider a gas mixture of $I$ species where each component of the mixture, namely $A_i$ with $1\leq i\leq I$, is described by its distribution density function denoted with $f_i:= f_i(t,v)\geq 0$ depending on time $t>0$ and particle velocity $v\in \R^3$.  The evolution of the gas mixture $\F = [f_i]_{1\leq i\leq I}$ satisfies the Boltzmann system
\begin{equation}\label{Bolt1}
\partial_t \F(t,v)=\Q(\F,\F),
\end{equation}
where $\Q(\F,\F)$ is the vector of collision operator defined for each component as
\begin{equation}\label{ColOp1}
[\Q(\F,\F)]_i :=\sum_{j=1}^I Q_{ij}(f_i,f_j)\,.
\end{equation}
Here the collision Boltzmann operators $Q_{ij}(f_i,f_j)$ measure the influence that the species $f_j$ exerts on the species $f_i$.  Consequently, for any $i$ fixed the distribution function $f_i$ solves a Boltzmann equation where the collision operator takes into account the influence of all the species $A_i$ as
\begin{equation}\label{Bolt-i}
\partial_t f_i(t,v)=[\Q(\F,\F)]_i=\sum_{j=1}^I Q_{ij}(f_i,f_j)\,,\qquad 1\leq i \leq I\,.
\end{equation}

\subsubsection{Collision process}
Consider two interacting particles from the species $A_i$ with associated particle mass $m_i>0$ and $A_j$ with associated particle mass $m_{j}>0$.   The shorthand $('\!v_{ij},\, '\!v_{*,ij})$ denotes the particles velocities before collision.  The dependence of such pre-collisional velocities with respect to the post-collisional velocities $(v,v_*)$ is given by 
\begin{align}
\begin{split}\label{collision-laws}
'\!v_{ij}&=\frac{m_iv+m_jv_*}{m_i+m_j}+\frac{m_j}{m_i+m_j}|v-v_*|\sigma = v - \frac{2\,m_j\,u^{-}}{m_{i}+m_j}\\
'\!v_{*,ij}&=\frac{m_iv+m_jv_*}{m_i+m_j}-\frac{m_i}{m_i+m_j}|v-v_*|\sigma =v_* + \frac{2\,m_i\,u^{-}}{m_{i}+m_j}\,.
\end{split}
\end{align}
Here $\sigma\in\mathcal{S}^{2}$ is the scattering direction and $u^\pm = \frac{u\pm|u|\sigma}{2}$. After collision, the species remain in the  same species with the same associated mass.  The collision laws \eqref{collision-laws} conserve momentum and kinetic energy, that is, the interactions are elastic.  In this way one has that pre and post collisional velocities are interchangeable $('\!v_{ij},\, '\!v_{*,ij})= (v'_{ij},\, v'_{*,ij})$ and the conservation laws
\begin{align}
\begin{split}\label{conservation-laws}
m_i v'_{ij}+m_j v'_{*,ij}&=m_iv+m_jv_* \qquad \text{conservation of momentum}\,,\\
m_i|v'_{ij}|^2+m_j|v'_{*,ij}|^2&=m_i|v|^2+m_j|v_*|^2 \qquad \text{conservation of energy}\,,
\end{split}
\end{align}
are valid.  Now, note that in the collision laws \eqref{collision-laws} (or in the conservation laws \eqref{conservation-laws}) one can divide the laws by $\sum_{j=1}^I m_{j}$ giving normalised component masses $\tilde m_{i} = \frac{m_{i}}{\sum_{j=1}^I m_{j}}\in(0,1)$, for each species $1\leq i\leq I$.  For such reason, one can assume without loss of generality that the component masses add up to unity $\sum_{j=1}^I m_j=1.$

\begin{rem}
To simplify notation we eliminate the subindices from the shorthand $(v'_{ij},v'_{*,ij})$ and simply write $(v',v'_*)$.  Just keep in mind that the post and pre collisional relations will differ for each collision operators $Q_{ij}$.
 \end{rem}

\subsubsection{Collision operators} The collision operator $Q_{ij}$ describing the binary interaction between the particles of species  $A_i$, denoted by $f$, and the particles of species $A_j$, denoted by $g$, is explicitly defined by
\begin{equation}\label{Q1}
Q_{ij}(f,g)(v)=\int\int_{\R^3\times \S^2} \big(f(v')g(v'_*)-f(v)g(v_*)\big)B_{ij}(|u|, \widehat{u}\cdot\sigma)d\sigma dv,
\end{equation}
where $u=v-v_*$ is the relative velocity between interacting particles, $\widehat{u}=\frac{u}{|u|}$ its direction, $\sigma=\frac{u'}{|u'|}$ the scattering direction, and $B_{ij}$ the collisional cross section.  The $B_{ij}$ are positive functions satisfying the \textit{symmetric condition} $B_{ij}=B_{ji}$ and the following micro-reversibility assumption
\begin{equation}\label{B0}
B_{ij}(v,v_*,\sigma)=B_{ij}(v',v'_*, \sigma')=B_{ij}(v_*,v,-\sigma),
\end{equation}
with $\sigma'=\frac{u}{|u|}$.

In this document we concentrate our efforts in hard potential interactions without angular cutoff.  More precisely, the collision cross sections $B_{ij}$, for $1\leq i,j \leq I$, are assumed to satisfy the explicit expression
\begin{equation}\label{B1}
B_{ij}(|u|, \widehat{u}\cdot\sigma)=|u|^{\lambda_{ij}}b_{ij}(\widehat{u}\cdot\sigma),\qquad \lambda_{ij}\in(0,2]\,,
\end{equation}
where $b_{ij}$ is the angular scattering kernel.  Writing $\widehat{u}\cdot\sigma=\cos\theta$ with $\theta\in(0,\frac\pi2]$, the angular scattering satisfies the condition 
\begin{equation}\label{B2}
\exists\, \kappa^1_{ij},\kappa^2_{ij}\in (0,\infty) \qquad \kappa^1_{ij}\,\theta^{-s_{ij}-1}\leq \beta_{ij}(\theta)=\sin\theta\, b_{ij}(\cos\theta)\leq\kappa^2_{ij}\,\theta^{-s_{ij}-1},\quad s_{ij}\in(0,2).
\end{equation}
The condition on the support of $b$ implies that the regime we treat in this document is the so-called peaked forward scattering.  Note that the symmetric condition implies that
\begin{equation*}
\lambda_{ij}=\lambda_{ji} \qquad \text{and} \qquad b_{ij}=b_{ji} \quad (\text{that is} \;\, s_{ij}=s_{ji})\,.
\end{equation*}
The following parameters, related to the minimum and the maximum value of the potential rates $\lambda_{ij}$, will be important when considering the different properties of solutions,
$$\overline{\lambda}_i:=\min_{1\leq j\leq I}\lambda_{ij},\qquad \overline{\overline{\lambda}}_i:=\max_{1\leq j\leq I}\lambda_{ij}, $$
$$\overline{\lambda}:=\min_{1\leq i,j\leq I}\lambda_{ij},\qquad \overline{\overline{\lambda}}:=\max_{1\leq i,j\leq I}\lambda_{ij},$$ and $$\lambda^\natural:=\min_{1\leq i\leq I}\max_{1\leq j\leq I}\lambda_{ij}\,.$$
Additionally, we will consider the following parameters related to the angular singularities,
$$\overline{s}_i:=\min_{1\leq j\leq I}s_{ij},\qquad \overline{\overline{s}}_i:=\max_{1\leq j\leq I}s_{ij},$$ 
$$\overline{s}:=\min_{1\leq i,j\leq I}s_{ij},\qquad \overline{\overline{s}}:=\max_{1\leq i,j\leq I}s_{ij}\,,$$
and $$s^\natural:=\min_{1\leq i\leq I}\max_{1\leq j\leq I} s_{ij}.$$
Before continuing, let us mention that condition \eqref{B2} is only needed in a vicinity of $\theta=0$.  Away from this point $b$ can be assumed just integrable.  Additionally, let us remark that for the scalar Boltzmann equation the support of the angular scattering $b(\cos(\theta))$ can be assumed in the upper scattering sphere $\theta\in(0,\pi/2]$ with no loss of generality since $v'\leftrightarrow v'_{*}$ under the change $\sigma\leftrightarrow-\sigma$.  This is not the case for monatomic gas mixtures due to lack of symmetry in the collision laws.  We point out, however, that the arguments and results of Section 3 are all valid, without modification, with a kernel $b(\cos(\theta))$ defined in $(0,\pi]$.   Morevover, from a physical perspective, the most generic assumption for the backscattering is $b(\hat{e}\cdot \sigma)1_{ \{ \hat{e}\cdot \sigma\leq0 \}} \in L^{1}_{\sigma}(\mathcal{S}^{2})$.  In other words, terms associated to the backscattering are of lower order in comparison to the forward and can be included in the analysis of Section 4 at a technical price, which may slightly impact some of the arguments in this section.  For the interested reader, we refer to \cite{CGP} for the nuances of the cutoff scattering analysis. 
\subsubsection{Functional spaces}
First, we recall some notations and definitions that will be important along the manuscript.
\begin{itemize}
\item[$\cdot$] The mixture's bracket form is defined as
\begin{equation*}
\langle v\rangle_i:=\sqrt{1+\frac{m_i}{\sum_{j=1}^I m_{j}}|v|^2}= \sqrt{1+m_i\,|v|^2},\qquad\qquad v\in\R^3\,,\quad 1\leq i\leq I\,.
\end{equation*}
\item[$\cdot$] The scalar $k^{th}$-polynomial moment, with $k\geq 0$, is defined for any $0\leq f \in L^{1}_{k}$ as
\begin{equation*}
{\bf m}_{k,i}[f](t):= \int_{\R^3} f(t,v)\langle v\rangle_i^k dv\,,\qquad 1\leq i \leq I\,.
\end{equation*} 
For the gas mixture $\F=[f_i]_{1\leq i\leq I}$ we use the shorthand
\begin{equation*}
{\bf m}_{k,i}(t)={\bf m}_{k,i}[f_i](t):= \int_{\R^3} f_{i}(t,v)\langle v\rangle_i^k dv\,,
\end{equation*} 
that is, the subindex $i$ in the shorthand ${\bf m}_{k,i}(t)$ prescribes the bracket and the associate component in $\F$.   The cumulative $k^{th}$ moment of the gas mixture is defined as
 \begin{equation*}
 {\bf m}_k[\F](t):= \sum_{i=1}^I {\bf m}_{k,i}[f_i](t)=  \sum_{i=1}^I\int_{\R^3} f_{i}(t,v)\langle v\rangle_i^k dv.
\end{equation*}
\end{itemize}
The Banach $L^p_n$ spaces associated to the mixture are defined as follow, refer to \cite{CGP},
$$L^p_n=\Big\{\F=[f_i]_{1\leq i\leq I}\,\Big|\, \sum_{i=1}^I \int_{\R^3}\Big(\langle v\rangle_i^n |f_i(v)|\Big)^pdv<\infty,\; n\geq 0,\; 1\leq p <\infty \Big\}$$
with associated norm
$$\|\F\|_{L^p_n}:= \bigg(\sum_{i=1}^I\int_{\R^3}\Big(\langle v\rangle_i^n |f_i(v)|\Big)^p dv\bigg)^{\frac{1}{p}}.$$
For the special case $p=+\infty$, the space is defined as
$$L^\infty_n=\Big\{\F=[f_i]_{1\leq i\leq I} \, \Big| \, \sum_{i=1}^I {\rm ess}\,\sup \langle  v\rangle_i^n |f_i(v)| <\infty,\; n\geq 0\Big\}$$
with associated norm
$$\|\F\|_{L^\infty_n}:=\sum_{i=1}^I {\rm ess}\sup\,\langle v\rangle_i^n |f_i(v)|\,.$$
The $L\log L$ space is defined as 
$$L\log L=\Big\{\F=[f_i]_{1\leq i\leq I} \, \Big| \, \sum_{i=1}^I \int_{\R^3} |f_i(v)|\log\big(1 + |f_i(v)|\big)dv<\infty \Big\}$$
with associated norm
$$\|\F\|_{L\log L}:= \sum_{i=1}^I\int_{\R^3} |f_i(v)|\log\big(1 + |f_i(v)| \big)dv.$$
We also work with the Sobolev spaces 
$$H^k:=\Big\{\F=[f_i]_{1\leq i\leq I} \,\Big| \, \sum_{i=1}^I \int_{\R^3}\Big|\big(1+(-\Delta)\big)^\frac{k}{2} f_i(v)\Big|^2dv<\infty,\; k\geq 0\Big\}\,,$$
endowed with the norm 
$$\|\F\|_{H^k}=\Big(\sum_{i=1}^I \int_{\R^3}\Big|\big(1+(-\Delta)\big)^\frac{k}{2} f_i(v)\Big|^2dv\Big)^\frac{1}{2}=\Big(\sum_{i=1}^I \int_{\R^3}\big(1+|\xi|^2\big)^k |\mathcal{F}(f_i)(\xi)|^2d\xi\Big)^\frac{1}{2}\,.$$
\subsubsection{Weak form of the Boltzmann collision operator} Testing the collision operator against a suitable test vector function $[\psi_i]_{1\leq i\leq I}$ it holds that
\begin{equation*}
\int_{\R^3}Q_{ij}(f_i,f_j)(v)\psi_i(v)dv=\int\int\int_{\R^3\times\R^3\times \S^2}f_i(v)f_j(v_*) \big(\psi_i(v')-\psi_i(v)\big)B_{ij}(|u|,\widehat{u}\cdot\sigma)d\sigma dv_*dv\,,
\end{equation*}
and
\begin{equation*}
\int_{\R^3}Q_{ji}(f_j,f_i)(v)\psi_j(v)dv=\int\int\int_{\R^3\times\R^3\times \S^2}f_j(v)f_i(v_*) \big(\psi_j(v')-\psi_j(v)\big)B_{ji}(|u|,\widehat{u}\cdot\sigma)d\sigma dv_*dv.
\end{equation*}
Thus, splitting the sum in the sets $\{i\geq j\}$ and $\{i<j\}$ we can interchange indexes $i\leftrightarrow j$ to obtain that
\begin{align*}
2\sum^{I}_{i=1}\int_{\R^3}\big[\Q(\F,\F)\big]_i \psi_i(v)\, dv &= 2\sum_{i=1}^I\sum_{j=1}^I \int_{\R^3}Q_{ij}(f_i,f_j)(v)\psi_i(v)dv \\
&= \sum_{i=1}^I\sum_{j=1}^I \int_{\R^3}\Big(Q_{ij}(f_i,f_j)(v)\psi_i(v) + Q_{ji}(f_{j},f_{i})\psi_{j}(v)\Big)dv\,.
\end{align*}
Recalling that $B_{ji}=B_{ij}$ and the micro reversibility condition \eqref{B0}, we can interchange variables $(v,v_*,\sigma)\rightarrow(v_*,v,-\sigma)$ in the term associated to $Q_{ji}(f_j,f_i)$ (it holds that $v'_{ji} = v'_{*,ij}$) to obtain that
\begin{equation}\label{Q4}
\begin{split}
2\sum^{I}_{i=1}\int_{\R^3}\big[\Q(\F,\F)\big]_i \psi_i(v)\,& dv = \sum_{i=1}^I\sum_{j=1}^I\int\int\int_{\R^3\times\R^3\times \S^2}f_i(v)f_j(v_*)\\
&\times \big(\psi_i(v')+\psi_j(v'_*)-\psi_i(v)-\psi_j(v_*)\big)B_{ij}(|u|,\widehat{u}\cdot\sigma)d\sigma dv_*dv\,.
\end{split}
\end{equation}
The weak form of Boltzmann collision operator imply the conservation properties of the system. In particular, $$\int_{\R^3}Q_{ij}(f_i, f_j)(v)dv=0\,,\qquad 1\leq i,\,j\leq I\,,$$
which leads to conservation of mass for each single component and for the system as a whole
$$\int_{\R^3}\big[\Q(\F,\F)\big]_i dv = 0 \qquad\text{and}\qquad \sum_{i=1}^I\int_{\R^3}\big[\Q(\F,\F)\big]_i dv=0\,.$$
Moreover, when we choose the test functions $\psi_i=m_i\, v$ and $\psi_i=m_i\, |v|^2$ in the weak form \eqref{Q4} we obtain the conservation of momentum and energy for the system as a whole (but not for each single species)
 $$\sum_{i=1}^I \int_{\R^3}\big[\Q(\F,\F)\big]_i \,m_i \,v\,dv=0\qquad\text{and}\qquad\sum_{i=1}^I \int_{\R^3}\big[\Q(\F,\F)\big]_i\,m_i\, |v|^2\,dv=0\,.$$
More precisely, if $\F$ is a solution of Boltzmann gas mixture system then 
$$ \int_{\R^3}f_i(v)dv= \int_{\R^3}f_{0,i}(v)dv\,,\qquad 1\leq i \leq I\,,\qquad \text{conservation of mass per species}\,,$$
and
$$\sum_{i=1}^I\int_{\R^3}m_i \,v \,f_i(v)dv=\sum_{i=1}^I\int_{\R^3}m_i \,v\,f_{0,i}(v)dv\,,\qquad\text{conservation of total momentum}\,,$$
and
$$\sum_{i=1}^I\int_{\R^3}m_i |v|^2f_i(v)dv=\sum_{i=1}^I\int_{\R^3}m_i |v|^2f_{0,i}(v)dv\,,\qquad \text{conservation of total energy}\,.$$
This latter yields
$$\int_{\R^3}m_i |v|^2f_i(v)dv\leq \sum_{i=1}^I\int_{\R^3}m_i |v|^2f_{0,i}(v)dv\,, \qquad 1\leq i \leq I\,.$$
Also, we recall the gas mixture entropy
$$\mathcal{H}[\F]:=\sum_{i=1}^I\int_{\R^3}f_i(v)\log(f_i)(v)dv\,,$$
and the gas mixture entropy production, given by 
$$D[\F]:=\sum_{i=1}^I \int_{\R^3}\big[\Q(\F,\F)\big]_i(v) \log(f_i)(v)dv.$$
It is possible to deduce from the weak form \eqref{Q4} that $D[\F]\leq 0$, refer to \cite{DMS}, which yields that 
\begin{equation}\label{log}
\mathcal{H}[\F](t)\leq \mathcal{H}[\F](0)=\mathcal{H}_0, \qquad \forall\, t>0.
\end{equation}
Consequently,
\begin{align*}
\| \F \|_{L\log L} &= \sum_{i=1}^I\int_{\R^3}f_{i}(v)\log(1 +  f_{i}(v))dv=\sum_{i=1}^I\int_{\{f_i\leq 1\}} f_{i}(v)\log(1+f_{i}(v))dv\\
&\hspace{6cm} +\sum_{i=1}^I\int_{\{f_i\geq 1\}} f_{i}(v)\log(1+f_{i}(v))dv\\
&\leq \log(2)\sum^{I}_{i=1}\int_{\R^3} f_{i}(v)dv  + \sum^{I}_{i=1}\int_{\R^3} f_{i}(v)\log(f_i(v))dv - \sum^{I}_{i=1}\int_{\{f_i\leq1\}} f_{i}(v)\log(f_i(v))dv\,.
\end{align*}
Note that
\begin{equation*}
-\sum^{I}_{i=1}\int_{\{f_i\leq1\}} f_{i}(v)\log(f_i(v))dv \leq \tfrac32\sum^{I}_{i=1}\int_{\R^3} |f_{i}(v)|^{\frac34}dv \leq C\bigg(\sum^{I}_{i=1}\int_{\R^3} f_{i}(v)\langle v \rangle^\frac43_idv\bigg)^{\frac34}
\end{equation*}
for $C=\frac32\big(\sum^{I}_{i=1}\int \langle v \rangle^{-4}_{i}\big)^\frac14$.  Therefore, by conservation of total mass, energy, and dissipation of entropy, it holds that
\begin{align}
\begin{split}\label{LlogL}
\| \F \|_{L\log L}\leq \log(2)\sum^{I}_{i=1}&\int_{\R^3} f_{0,i}(v)dv \\
&+ \sum^{I}_{i=1}\int_{\R^3} f_{0,i}(v)\log(f_{0,i}(v))dv +C\bigg(\sum^{I}_{i=1}\int_{\R^3} f_{0,i}\langle v \rangle^{2}_{i}\,dv\bigg)^\frac34\,.
\end{split}
\end{align}
Thus, we consider initial data satisfying that
\begin{equation}
\begin{split}\label{Cond1}
{\bf m}_0 &= \sum_{i=1}^I\int_{\R^3} f_{0,i}(v) dv<\infty\,,\qquad {\bf m}_{1} = \sum_{i=1}^I\int_{\R^3} m_i \,v \, f_{0,i}(v) dv= 0\,,\\
{\bf m}_{2} &= \sum_{i=1}^I\int_{\R^3}f_{0,i}(v) \langle v \rangle^{2}_{i}dv<\infty\,,\qquad \mathcal{H}_0=\sum_{i=1}^I\int_{\R^3} f_{0,i}(v)\,\log(f_{0,i})(v)dv<\infty\,,
\end{split}
\end{equation}
so that $\| \F \|_{L\log L} \leq \log(2){\bf m}_0 + \mathcal{H}_0 + C\,{\bf m}^{\frac34}_{2}<\infty$ as argued in \eqref{LlogL}.  In this way, solutions to the gas mixture will formally lie in the space 
$$U(D_0, E_0)=\big\{ \G\in (L^1_2\cap L\log L)\,,\, \G\geq 0 \, \big| \, \|g_i\|_{L^1}\geq D_0,\; \|\G\|_{L^1_2}+\|\G\|_{L\log L}\leq E_0 \big\},$$
where $D_0,\,E_0>0$. In particular, we take initial data $\F_0\in U(D_0,E_0)$.
\subsection{Main Results}
Recall that we address \textit{a priori} estimates related to statistical moments and higher integrability for a system of spatially homogeneous Boltzmann equations describing a gas mixture of monatomic components $A_1, . . . , A_I$, where the component $A_{i}$ has associated particle mass $m_{i}>0$
\begin{equation}
\label{B-E1}
\begin{cases}
\partial_t \F(t,v)=\Q(\F,\F)(t,v),\qquad t>0,\;\;v\in\R^3,\\
\F(0,v)=\F_0(v).
\end{cases}
\end{equation}
The $i^{th}$ entry of the vector $\F:=\F(t,v) = [f_i(t,v)]_{1\leq i\leq I}$ is the velocity distribution function $f_i:=f_{i}(t,v)\geq0$ associated to each component $A_i$ of the mixture.  Recall that
$$\Q(\F,\F)=\Big[\sum_{j=1}^I Q_{ij}\big(f_i,f_j\big)\Big]_{1\leq i\leq I}$$ 
was defined in \eqref{ColOp1}.  Also recall in the following that $\lambda_{ij}\in (0,2]$ and $s_{ij}\in (0,2)$ with $1\leq i, j \leq I$.  Keep in mind that the following are \textit{a priori} estimates, so we consider solutions $\F(t)$ to \eqref{E1} with sufficient regularity so that all computations performed are allowed.  Finally, in the results is explicit that constants depend of the parameters of the model, that is, on the $\lambda_{ij}$, $s_{ij}$, $\kappa^1_{ij}$, $\kappa^2_{ij}$, $m_{i}$, and $\| \theta^2 b_{ij}\|_{L^{1}(\mathcal{S}^2)}$.   Such constant will depend on the mass, enegy and entropy $D_0$ and $E_0$ of the gas mixture initial configuration as well.
\begin{prop}\label{Momlem}
Take $\lambda^\natural>0$ and $\F_0\in U(D_0,E_0)$.  For any integer $n\geq 2,$ the statistical moments satisfy the equation
$${\bf m}'_{2n}[\F(t)]=\sum^{I}_{i=1}\sum^{I}_{j=1} \partial_t{\bf m}^{ij}_{2n}(t)\,,\qquad t>0\,,$$
where, for each $1\leq i,j\leq I$,
\begin{align*}
\begin{split}
\partial_t{\bf m}^{ij}_{2n}:=\frac{1}{2}\int_{\mathbb{R}^3}\int_{\mathbb{R}^3}\int_0^\pi\int_0^{2\pi}\Big(\langle v'\rangle_i^{2n}+ \langle v_*'\rangle_j^{2n} - \langle v&\rangle_i^{2n} - \langle v_*\rangle_j^{2n}\Big)d\varphi\, \beta_{ij}(\theta)\,d\theta\\
&\times\,|v-v_*|^{\lambda_{ij}}\,f_i(v)\,f_j(v_*)\,dv_* \,dv\,.
\end{split}
\end{align*}
Furthermore, the following estimates hold for all $t>0$.

\medskip
\begin{itemize}
\item[(i)] For any fixed $1\leq i,j\leq I$ there exist positive constants $C^1_{ij}$ and $C^2_{ij}$ depending on the parameters of the model, $D_0$ and $E_0$, such that the pair $(f_i(t),f_j(t))$ satisfies
$$\partial_t{\bf m}^{ij}_{2n}(t)\leq -C^1_{ij}\,n^{s_{ij}/2}\big({\bf m}_{2n+\lambda_{ij},i}(t)+{\bf m}_{2n+\lambda_{ij},j}(t)\big)+C_{ij}^2\,S^{ij}_{n}(t)\,,$$
where
\begin{align*}
\mathcal{S}^{ij}_{n}(t) := \sum^{\lfloor n/2 \rfloor}_{a=1}\Big(\begin{matrix}n\\a\end{matrix}\Big)\frac{n^{s_{ij}/2}}{a^{s_{ij}/2+1}}\Big({\bf m}_{2a,j}(t){\bf m}_{2(n-a)+\lambda_{ij},i}(t) +{\bf m}_{2a,i}(t){\bf m}_{2(n-a)+\lambda_{ij},j}\Big)\,.
\end{align*} 
\item[(ii)] In addition, the system $\F(t)$ satisfies the inequality
$${\bf m}'_{2n}[\F(t)]\leq - C_1\;n^{\frac{\bar s}{2}}\;  {\bf m}_{2n+{\lambda}^\natural}[\F(t)]+\Big(C_2\,2^{\frac n2}\,n^{\frac{\overline{\overline{s}}}{2}-\frac{\theta\overline{s}}{2}}\Big)^{\frac{1}{1-\theta}}\,,\qquad \theta=\frac{2n+\overline{\overline{\lambda}}-4}{2n+\lambda^\natural-2}<1.
$$
\end{itemize}
The positive constants $C_1$ and $C_2$ have the aforementioned dependence. 
\end{prop}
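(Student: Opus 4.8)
The plan is to obtain the moment identity from the weak form \eqref{Q4}, to localise the estimate to a single binary interaction through a Povzner-type inequality, and then to sum over species together with an interpolation--Young argument.

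\emph{Moment identity.} Differentiating ${\bf m}_{2n}[\F]=\sum_i{\bf m}_{2n,i}[f_i]$ in time and using \eqref{Bolt-i} gives ${\bf m}'_{2n}[\F]=\sum_{i,j}\int_{\R^3}Q_{ij}(f_i,f_j)\langle v\rangle_i^{2n}\,dv$; inserting the test vector $\psi_i=\langle v\rangle_i^{2n}$ in \eqref{Q4} and writing $b_{ij}(\widehat u\cdot\sigma)\,d\sigma=\beta_{ij}(\theta)\,d\theta\,d\varphi$ on $(0,\pi]\times(0,2\pi]$, one obtains exactly the claimed decomposition ${\bf m}'_{2n}[\F]=\sum_{i,j}\partial_t{\bf m}^{ij}_{2n}$, the prefactor $\tfrac12$ coming from the factor $2$ on the left of \eqref{Q4}. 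Everything reduces to the angular average
\[
\mathcal A^{ij}_n(v,v_*):=\int_0^\pi\!\!\int_0^{2\pi}\Big(\langle v'\rangle_i^{2n}+\langle v'_*\rangle_j^{2n}-\langle v\rangle_i^{2n}-\langle v_*\rangle_j^{2n}\Big)\,d\varphi\,\beta_{ij}(\theta)\,d\theta .
\]

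\emph{Povzner inequality and part (i).} For $\mathcal A^{ij}_n$ I would use the monatomic Povzner lemma proved among the preliminary results of Section~3. Its derivation starts from the elastic identity $\langle v'\rangle_i^2+\langle v'_*\rangle_j^2=\langle v\rangle_i^2+\langle v_*\rangle_j^2$ and from $|u^-|^2=|u|^2\sin^2(\theta/2)$, so that $\langle v'\rangle_i^2=\langle v\rangle_i^2-\sin^2(\tfrac\theta2)\,P_{ij}(v,v_*)+(\varphi\text{-oscillating})$, the oscillating part integrating to zero in $d\varphi$; expanding the $n$-th power, the diagonal contribution near $\theta=0$ behaves like $\langle v\rangle_i^{2n}\big(e^{-c\,n\theta^2}-1\big)$, whose $\beta_{ij}$-weighted $\theta$-integral is $\asymp -n^{s_{ij}/2}\langle v\rangle_i^{2n}$ after rescaling $t=\sqrt n\,\theta$ (using $s_{ij}<2$), while the off-diagonal part contributes the monomials $\langle v\rangle_i^{2(n-a)}\langle v_*\rangle_j^{2a}$ and $\langle v\rangle_i^{2a}\langle v_*\rangle_j^{2(n-a)}$ with coefficients $\binom na$ times angular integrals $\int_0^\pi\sin^{2a}(\tfrac\theta2)\cos^{2(n-a)}(\tfrac\theta2)\beta_{ij}(\theta)\,d\theta\lesssim n^{s_{ij}/2}a^{-s_{ij}/2-1}$. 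Multiplying the resulting pointwise bound by $|v-v_*|^{\lambda_{ij}}f_i(v)f_j(v_*)$ and integrating in $(v,v_*)$: in the off-diagonal terms $|v-v_*|^{\lambda_{ij}}\lesssim\langle v\rangle_i^{\lambda_{ij}}+\langle v_*\rangle_j^{\lambda_{ij}}$ produces (after using $\binom na=\binom n{n-a}$ to fold the sum onto $a\le\lfloor n/2\rfloor$) the products ${\bf m}_{2a,j}{\bf m}_{2(n-a)+\lambda_{ij},i}$ and ${\bf m}_{2a,i}{\bf m}_{2(n-a)+\lambda_{ij},j}$ that make up $S^{ij}_n$; in the coercive term, the lower bound $\int_{\R^3}|v-v_*|^{\lambda_{ij}}f_j(v_*)\,dv_*\ge c\,\langle v\rangle_i^{\lambda_{ij}}$ (with $c=c(D_0,E_0)$, via a Section~5 lemma resting on conserved mass/energy and the uniform $L\log L$ bound \eqref{LlogL}) and its symmetric companion produce $-C^1_{ij}n^{s_{ij}/2}\big({\bf m}_{2n+\lambda_{ij},i}+{\bf m}_{2n+\lambda_{ij},j}\big)$, up to a lower-order remainder absorbed in $C^2_{ij}S^{ij}_n$. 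This proves (i).

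\emph{Summation and part (ii).} I would sum (i) over $1\le i,j\le I$. In the negative part, for each $i$ retain only the index $j^\natural(i)$ with $\lambda_{i\,j^\natural(i)}=\overline{\overline\lambda}_i\ge\lambda^\natural$ (all other terms being negative) and use $s_{ij}\ge\overline s$ together with the monotonicity $k\mapsto{\bf m}_{k,i}$ (since $\langle v\rangle_i\ge1$) to bound it from above by $-C_1\,n^{\overline s/2}\,{\bf m}_{2n+\lambda^\natural}[\F]$. In the positive part, dominate $s_{ij}\le\overline{\overline s}$, $\lambda_{ij}\le\overline{\overline\lambda}$ and collect the moment products into cumulative moments: $\sum_{i,j}C^2_{ij}S^{ij}_n\lesssim\sum_{a=1}^{\lfloor n/2\rfloor}\binom na\tfrac{n^{\overline{\overline s}/2}}{a^{\overline s/2+1}}\,{\bf m}_{2a}[\F]\,{\bf m}_{2(n-a)+\overline{\overline\lambda}}[\F]$. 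The key algebraic observation is that, interpolating both factors against ${\bf m}_{2n+\lambda^\natural}[\F]$ and ${\bf m}_2[\F]\le E_0$,
\[
{\bf m}_{2a}[\F]\le{\bf m}_{2n+\lambda^\natural}[\F]^{\rho_a}E_0^{1-\rho_a},\qquad {\bf m}_{2(n-a)+\overline{\overline\lambda}}[\F]\le{\bf m}_{2n+\lambda^\natural}[\F]^{\theta_a}E_0^{1-\theta_a},
\]
with $\rho_a=\tfrac{2a-2}{2n+\lambda^\natural-2}$ and $\theta_a=\tfrac{2(n-a)+\overline{\overline\lambda}-2}{2n+\lambda^\natural-2}$, one has $\rho_a+\theta_a=\theta$ \emph{independently of} $a$; hence the positive part is $\le C(n)\,{\bf m}_{2n+\lambda^\natural}[\F]^{\theta}$. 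Young's inequality with conjugate exponents $1/\theta,\,1/(1-\theta)$ then absorbs $\varepsilon\,{\bf m}_{2n+\lambda^\natural}[\F]$ into the coercive term (choosing $\varepsilon\sim C_1 n^{\overline s/2}$) and leaves the additive constant $\big(C(n)(C_1 n^{\overline s/2})^{-\theta}\big)^{1/(1-\theta)}$. Collecting the powers of $n$ ($n^{\overline{\overline s}/2}$ from $C(n)$ and $n^{-\theta\overline s/2}$ from the absorption) and bounding the weighted binomial sum — using for $a\le n/2$ the refined estimate $\binom na\!\int_0^\pi\sin^{2a}(\tfrac\theta2)\cos^{2(n-a)}(\tfrac\theta2)\beta_{ij}\,d\theta\lesssim 2^a a^{-s_{ij}/2-1}n^{s_{ij}/2}$ (from $\binom na(n-a)^{-a}\le 2^a$ and a $\Gamma$-function estimate), whose sum over $a$ is $\lesssim 2^{n/2}n^{s_{ij}/2}$ — brings the constant to the stated form $\big(C_2\,2^{n/2}\,n^{\overline{\overline s}/2-\theta\overline s/2}\big)^{1/(1-\theta)}$.

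\emph{Main obstacle.} The genuinely delicate ingredients are the Povzner lemma for the \emph{asymmetric} mixture collision, where the loss of the scalar symmetry $v'\leftrightarrow v'_*$ forces one to keep the brackets $\langle\cdot\rangle_i,\langle\cdot\rangle_j$ separate and to verify that the coercive gain is still $\asymp n^{s_{ij}/2}$ and the off-diagonal weights $\lesssim n^{s_{ij}/2}a^{-s_{ij}/2-1}$; and the combinatorial bookkeeping yielding the factor $2^{n/2}$ (rather than $2^{n}$), which hinges on the finer $(n-a)^{-a}$ shape of the off-diagonal angular integrals and on the restriction $a\le\lfloor n/2\rfloor$. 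The relative-velocity lower bound used in the coercive term and the monotonicity/interpolation of the cumulative moments are routine consequences of the conservation laws and of \eqref{LlogL}.
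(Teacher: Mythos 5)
Your proposal follows essentially the same route as the paper's proof: the weak form with $\psi_i=\langle v\rangle_i^{2n}$ for the identity, the asymmetric Povzner lemma with coercive gain of order $n^{s_{ij}/2}$ and off-diagonal weights $\binom{n}{a}\,n^{s_{ij}/2}a^{-s_{ij}/2-1}$, the lower bound $\int|v-v_*|^{\lambda_{ij}}f_j\,dv_*\gtrsim\langle v\rangle_i^{\lambda_{ij}}$ for the coercive term, and for (ii) the interpolation against ${\bf m}_{2n+\lambda^\natural}$ with total exponent $\theta$ independent of $a$ followed by Young's inequality. The only step you compress is the reduction of the ``crossed'' products ${\bf m}_{2a+\lambda_{ij},i}\,{\bf m}_{2(n-a),j}$ to the two forms appearing in $S^{ij}_n$ --- the relabelling $a\mapsto n-a$ alone does not achieve this, and the paper uses the mixed interpolation Lemma \ref{App-interp} for it --- but this is the routine interpolation you defer to and does not affect the argument.
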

Based on Proposition \ref{Momlem} item (ii) one can prove the following results about higher algebraic/exponential tail integrability.
\begin{theorem}[Polynomial Moments]\label{Mom}
Take $\lambda^\natural>0$ and $\F_0\in U(D_0,E_0)$.  Then, for any $r>2$  there exist positive constants $C_1$ and $C_2$ depending on $r$, the parameters of the model, $D_0$ and $E_0$, such that
$${\bf m}_r[\F](t)\leq C_1\Big(1+t^{-\frac{r-2}{{\lambda}^\natural}}\Big)\,,\qquad t>0\,.$$
Furthermore, in the particular case of even integers $r=2n\geq 4$ if ${\bf m}_{2n}(0)<\infty$ then
$$\sup_{t\geq 0}{\bf m}_{2n}[\F](t)\leq \max \big\{{\bf m}_{2n}(0), C_2 \big\}\,.$$
\end{theorem}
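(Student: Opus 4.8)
The plan is to use the ODE differential inequality from Proposition \ref{Momlem} item (ii) as the sole analytic input, and to extract both the moment generation bound and the propagation bound by elementary ODE comparison arguments adapted to the summability structure of the system. Throughout, write $y(t) := {\bf m}_{2n}[\F(t)]$ for the cumulative even moment of order $2n$; by interpolation of Lebesgue norms and the mass/energy control from \eqref{Cond1}, one has the reverse-H\"older type estimate ${\bf m}_{2n+\lambda^\natural}[\F] \gtrsim \big({\bf m}_{2n}[\F]\big)^{1+\lambda^\natural/(2n-2)}$, using that ${\bf m}_2[\F]$ is bounded by $E_0$ and ${\bf m}_0[\F] \ge D_0$. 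Substituting this into item (ii) produces a closed scalar differential inequality of the shape $y' \le -K_1 n^{\bar s/2}\, y^{1+\lambda^\natural/(2n-2)} + B_n$, where $B_n := \big(C_2\, 2^{n/2}\, n^{\overline{\overline s}/2 - \theta \bar s/2}\big)^{1/(1-\theta)}$ is the explicit source term, and one checks from the formula for $\theta$ that $1/(1-\theta) = (2n+\lambda^\natural-2)/(\lambda^\natural - \overline{\overline\lambda}+2)$ stays bounded in $n$, so that $B_n$ grows at most like $C^n$ for some constant.

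For the first assertion (generation for arbitrary real $r > 2$), I would first treat the even integer case $r = 2n$ and then recover general $r$ by the log-convexity / interpolation inequality ${\bf m}_r[\F] \le {\bf m}_{2n}[\F]^{\alpha}\, {\bf m}_{2n'}[\F]^{1-\alpha}$ for suitable even integers bracketing $r$, or more directly by applying the same ODE argument with the continuous Povzner estimate. The key ODE lemma is the standard one: if $y' \le -a\, y^{1+\delta} + b$ with $a,b,\delta>0$, then $y(t) \le \max\{(2b/a)^{1/(1+\delta)},\, (a\delta t)^{-1/\delta}\}$ for all $t>0$ — the first term handling large time (balance of the two right-hand terms) and the second handling small time (comparison with the solution of $y' = -a y^{1+\delta}$ ignoring the source, which is the Bernoulli ODE with explicit solution decaying like $t^{-1/\delta}$, and which dominates near $t=0$ regardless of initial data). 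With $\delta = \lambda^\natural/(2n-2)$ this yields $y(t) \lesssim 1 + t^{-(2n-2)/\lambda^\natural}$; writing $r-2 = 2n-2$ gives the stated exponent. One must track that the constant $(2B_n/a_n)^{1/(1+\delta_n)}$ and the prefactor in the $t^{-1/\delta}$ term remain bounded uniformly — here the $n^{\bar s/2}$ gain in $a_n$ exactly compensates for (indeed beats) any polynomial-in-$n$ loss, and the geometric growth $C^n$ of $B_n$ is tamed because it is raised to the power $1/(1+\delta_n) \to 1$ but multiplied against the favorable power of $a_n^{-1}$; this bookkeeping is the one genuinely delicate point.

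For the second assertion (propagation: uniform-in-time bound when ${\bf m}_{2n}(0)<\infty$), the same differential inequality $y' \le -a_n y^{1+\delta_n} + B_n$ immediately gives a barrier: the constant function $M := \max\{ y(0),\, (B_n/a_n)^{1/(1+\delta_n)}\}$ is a supersolution, since at any time where $y = M \ge (B_n/a_n)^{1/(1+\delta_n)}$ we have $-a_n M^{1+\delta_n} + B_n \le 0$, so $y$ cannot cross above $M$; hence $\sup_{t\ge 0} y(t) \le \max\{y(0), C_2\}$ with $C_2 := (B_n/a_n)^{1/(1+\delta_n)}$ absorbing the model constants. I would present this as a short maximum-principle argument for ODEs rather than solving anything explicitly.

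I expect the main obstacle to be the uniform-in-$n$ control of the constants in the generation estimate: one must verify that $(B_n / a_n)^{1/(1+\delta_n)}$ does not blow up with $n$, which requires the exponential growth in $B_n$ to be dominated after taking the $1/(1+\delta_n)$ power and combining with $a_n^{-1/(1+\delta_n)} \sim n^{-\bar s/(2(1+\delta_n))}$; since $1+\delta_n \to 1$, the power of $2^{n/2}$ does \emph{not} get damped, so in fact the "$1$" in $C_1(1+t^{-(r-2)/\lambda^\natural})$ hides a constant that a priori could depend on $r=2n$ — one resolves this by noting that for the \emph{generation} statement only finitely many (in fact, a single) value of $n$ needs to be used to interpolate down to a given $r$, or alternatively by a more careful Povzner bookkeeping that replaces the crude $2^{n/2}$ by the sharper binomial-sum estimate; getting this clean is where most of the real work lies. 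The propagation statement, by contrast, is essentially immediate from the supersolution argument once item (ii) is in hand.
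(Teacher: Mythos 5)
Your proposal is correct and follows essentially the same route as the paper: it closes the differential inequality of Proposition \ref{Momlem}(ii) via the interpolation ${\bf m}_{2n}\le {\bf m}_2^{1-\theta}{\bf m}_{2n+\lambda^\natural}^{\theta}$, applies the standard ODE comparison to obtain generation with rate $t^{-(2n-2)/\lambda^\natural}$ and the supersolution/barrier bound for propagation, and then interpolates down to general $r>2$ (the paper interpolates against the conserved ${\bf m}_2$, which is a special case of your bracketing). The only slip is the side remark that $1/(1-\theta)=(2n+\lambda^\natural-2)/(\lambda^\natural-\overline{\overline{\lambda}}+2)$ ``stays bounded in $n$'' --- it grows linearly in $n$ --- but this is immaterial here since, as you yourself observe, a single fixed $n$ suffices for each $r$ and the constants in the statement are allowed to depend on $r$.
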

Using the two aforementioned results, we deduce the following theorem.
\begin{theorem}[Exponential Moments]\label{theo1} 
Let  $\lambda^\natural>0$, $s^\natural>0$, and $\F_0\in U(D_0,E_0)$.
\begin{itemize}
\item[(i)] Let $\rho=\min\big\{\frac{2\lambda^\natural}{2-s^\natural}, 2\big\}$.  There exist constants $T>0$ and $\sigma>0$ depending only on the parameters of the model, $D_0$ and $E_0$, such that
$$\sup_{[0,T]}\sum_{i=1}^I\int_{\R^3}\exp\Big[\frac{\sigma t^{\rho/\lambda^\natural}\langle v\rangle_i^{\rho}}{2}\Big]f_i(v)dv\leq 4\, {\bf m}_0\qquad \text{(Generation of Moments)}\,.$$
\item[(ii)] For any $A>0$, $\sigma_0>0$ and $\rho\in(0,2]$, there exists a (computable) constant $\sigma>0$ depending on the parameters of the model, $D_0$, $E_0$, $\rho$, $\sigma_0$, and $A$, such that if
$$\sum_{i=1}^I\int_{\R^3}\exp\big[ \sigma_0 \langle v\rangle_i^\rho \big] \, f_i(0,v)dv\leq A,$$
then
$$\sup_{t\geq 0}\sum_{i=1}^I\int_{\R^3}\exp[\sigma \langle v\rangle_i^{\rho}]f_i(t,v)dv\leq 6\,I\,e\,\big({\bf m}_{0}+1\big)\qquad \text{(Propagation of Moments)}\,.$$
\end{itemize}

\end{theorem}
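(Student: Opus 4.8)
The strategy is to recast both statements as estimates on a single renormalised generating function of even polynomial moments, and to run on it the differential inequalities of Proposition~\ref{Momlem} together with the polynomial moment bounds of Theorem~\ref{Mom}. Fix $\rho\in(0,2]$ as in the statement and set, for a solution $\F(t)$,
$$z_n(t):=\frac{{\bf m}_{2n}[\F](t)}{\Gamma\!\big(\tfrac{2n}{\rho}+1\big)},\qquad \E(t,x):=\sum_{n\ge 0}z_n(t)\,x^{n}\,.$$
Since the mixture brackets are comparable ($c_m\LR{v}_j\le\LR{v}_i\le C_m\LR{v}_j$, with constants depending only on $\min_i m_i$), interpolation between the moments ${\bf m}_{2n,i}$ is available up to harmless constants, and the elementary bound $\LR{v}_i^{2n}\le\big(\tfrac{2n}{\rho}+1\big)\Gamma\!\big(\tfrac{2n}{\rho}+1\big)\sigma_0^{-2n/\rho}e^{\sigma_0\LR{v}_i^\rho}$ together with H\"older's inequality (valid because $\rho\le2$) give, on the one hand, $\sum_i\int_{\R^3}e^{\sigma\LR{v}_i^\rho}f_i\,dv\lesssim\E(t,C\sigma^{2/\rho})$ (a Mittag-Leffler function of order $2/\rho$ dominating the exponential) and, on the other, ${\bf m}_{2n}[\F](0)\le A\,\big(\tfrac{2n}{\rho}+1\big)\Gamma\!\big(\tfrac{2n}{\rho}+1\big)\sigma_0^{-2n/\rho}$ under the hypothesis of part (ii), so that $\E(0,x)$ is finite and controlled for $x<\sigma_0^{2/\rho}$. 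The generation part of Theorem~\ref{Mom} makes $\E(t,x)$ well defined for every $t>0$ in part (i), even though $\E(0,\cdot)$ is only defined at $x=0$. The combinatorial engine is the Beta-function bound $\binom na\,\Gamma\!\big(\tfrac{2a}{\rho}+1\big)\Gamma\!\big(\tfrac{2(n-a)}{\rho}+1\big)\big/\Gamma\!\big(\tfrac{2n}{\rho}+1\big)\le C^{n}n$, which lets the convolution sums $\mathcal{S}^{ij}_n$ of Proposition~\ref{Momlem}(i) be rewritten, inside $\E$, as products of two generating series.

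For part (i) (generation) I would differentiate $t\mapsto\E(t,x(t))$ along a time-dependent coefficient $x(t)=c\,t^{2/\lambda^\natural}$ with $c$ small, chosen so that the coefficient of $\LR{v}_i^\rho$ in the exponential grows like $t^{\rho/\lambda^\natural}$; thus $\tfrac{d}{dt}\E(t,x(t))=\partial_t\E+x'(t)\,\partial_x\E$. Inserting Proposition~\ref{Momlem} summed over $i,j$ into $\partial_t\E$ produces a loss term $-c_0\sum_n n^{\bar s/2}\,{\bf m}_{2n+\lambda^\natural}[\F]\big/\Gamma\!\big(\tfrac{2n}{\rho}+1\big)\,x^n$ — with, after an optimisation in the pairwise estimates of item (i) over how much of the gains $n^{s_{ij}/2}$ and of the moment shifts $\lambda_{ij}$ to use, the exponent $\bar s$ upgraded to $s^\natural$ — and a gain term built from the $\mathcal{S}^{ij}_n$; using $\Gamma\!\big(\tfrac{2(n+\lambda^\natural/2)}{\rho}+1\big)\big/\Gamma\!\big(\tfrac{2n}{\rho}+1\big)\simeq n^{\lambda^\natural/\rho}$ and the identity $\tfrac{s^\natural}{2}+\tfrac{\lambda^\natural}{\rho}\ge1$, which is exactly $\rho\le\tfrac{2\lambda^\natural}{2-s^\natural}$, one arranges that the loss term dominates both the gain term and the extra term $x'(t)\partial_x\E$ whenever $\E$ is of size $\gtrsim2{\bf m}_0$. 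A continuity/bootstrap argument on $[0,T]$ — with $T$ the first time at which the increasing coefficient $x(t)$ could make the right-hand side change sign — then yields $\E(t,x(t))\le4{\bf m}_0$ and hence the claim. The cap $\rho\le2$ is the Gaussian barrier: H\"older fails for $\rho>2$ and no exponential moment of order exceeding that of the Maxwellian equilibrium is generated.

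For part (ii) (propagation) the coefficient $x$ is kept fixed and small. As before $\partial_t\E(t,x)\le-c_0\sum_n n^{\bar s/2}\,\tfrac{{\bf m}_{2n+\lambda^\natural}[\F]}{\Gamma(2n/\rho+1)}x^n+C\sum_n\tfrac{1}{\Gamma(2n/\rho+1)}\big(\sum_{i,j}\mathcal{S}^{ij}_n\big)x^n$. One splits each $\mathcal{S}^{ij}_n$ into the bulk $k_0\le a\le n-k_0$ and the two endpoint blocks $a<k_0$: the bulk is handled by the Beta-function bound and summability of $\sum_a a^{-s_{ij}/2-1}$, while the endpoint blocks — where ${\bf m}_{2(n-a)+\lambda_{ij},i}$ sits next to the top moment — are re-expressed, by interpolation, in terms of ${\bf m}_{2n+\lambda^\natural}[\F]$ and low moments (uniformly bounded by the propagation part of Theorem~\ref{Mom}), and the fact that the sum runs over $a\ge1$ makes the associated "low" generating series $O(x)$ as $x\to0$. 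Hence for $x=x_0$ small enough (depending on $\rho,\sigma_0,A$ and the model) the gain is absorbed and $\partial_t\E(t,x_0)\le0$, so $\E(t,x_0)\le\E(0,x_0)$ for all $t\ge0$; isolating the $n=0$ term (equal to ${\bf m}_0$) and absorbing the $I$-fold sum and the geometric tail into the numerical constant $6\,I\,e$ gives the stated bound with $\sigma\simeq x_0^{\rho/2}$.

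The main obstacle is precisely this absorption of the gain terms $\mathcal{S}^{ij}_n$ by the loss term, uniformly in $n$. Unlike the scalar case, the monatomic-mixture collision laws \eqref{collision-laws} lack the symmetry $v'\leftrightarrow v'_*$, so $\mathcal{S}^{ij}_n$ cannot be symmetrised in $(i,j)$: the two cross products ${\bf m}_{2a,i}\,{\bf m}_{2(n-a)+\lambda_{ij},j}$ and ${\bf m}_{2a,j}\,{\bf m}_{2(n-a)+\lambda_{ij},i}$ must be treated together and the single loss budget $n^{\bar s/2}{\bf m}_{2n+\lambda^\natural}[\F]$ shared across all species at once — one genuinely needs one inequality for $\sum_i\int_{\R^3} f_i e^{\sigma\LR{v}_i^\rho}\,dv$ rather than one per component, as announced in the introduction. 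The delicate point is matching the powers of $n$: the weight $n^{s_{ij}/2}a^{-s_{ij}/2-1}$ in $\mathcal{S}^{ij}_n$ and the mismatch between $\lambda_{ij}$ and $\lambda^\natural$ must be controlled, uniformly over the pairs $(i,j)$, by the $n^{\bar s/2}$ from the loss together with the $n^{\lambda^\natural/\rho}$ coming from the moment shift, and it is exactly this balance that dictates the value $\rho=\min\{\tfrac{2\lambda^\natural}{2-s^\natural},2\}$ (and, in part (i), the exponent $2/\lambda^\natural$ in $x(t)$, equivalently $\rho/\lambda^\natural$ in the exponential). The remaining difficulty is the routine but lengthy bookkeeping of the $n$-dependence of all constants through the interpolation and ODE comparison underlying Theorem~\ref{Mom}.
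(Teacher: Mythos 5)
Your architecture coincides with the paper's: a renormalised series of even moments with normalisation $(n!)^{2/\rho}\simeq\Gamma(\tfrac{2n}{\rho}+1)$, a time-dependent weight $t^{2n/\lambda^\natural}$ for generation and a fixed small weight for propagation, the convolution structure of the $\mathcal{S}^{ij}_n$ turned into products of two series, absorption of the gain by the loss for small $\sigma$ via a continuity argument, the system-level (not componentwise) balance, and the final conversion between moment sequences and exponential integrals (the paper's Lemma \ref{lemexp1}). Two steps, however, are not correct as written.

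First, you work directly with the untruncated series $\E(t,x)$ and assert that the generation part of Theorem \ref{Mom} makes it well defined for $t>0$. It does not: the constants $(B_n/A_n)^{1/(1+c_n)}$ and $(c_nA_n)^{-1/c_n}$ in that theorem grow superexponentially in $n$, so summability of $\sum_n z_n(t)x^n$ is not known a priori, and differentiating the series term by term is not justified. This is exactly why the paper runs the whole argument on partial sums $E_p$, proves a lower bound on the exit time $T_p$ that is \emph{uniform in $p$}, and only then lets $p\to\infty$. Your bootstrap needs the same truncation to be rigorous.

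Second, in the propagation part the claim that for $x_0$ small one gets $\partial_t\E(t,x_0)\le 0$, hence $\E(t,x_0)\le\E(0,x_0)$, is too strong. After the convolution estimate the gain has the schematic form $\epsilon\,F\,\E+B\sigma^{c}F+\sigma^{c}\mathcal{D}$ with $\mathcal{D}>0$ an irreducible additive constant coming from the low-order blocks ($a\le N_\epsilon$); it is small for small $\sigma$ but it is not proportional to the dissipation $F$, which can itself be arbitrarily small. So the sign of $\partial_t\E$ cannot be controlled pointwise. The missing ingredient is a coercivity statement for the dissipation in terms of $\E$ itself, namely the analogue of Lemma \ref{Prop2}: $\mathcal{P}^{ij}_p+\mathcal{P}^{ji}_p\ge\sigma^{-\lambda^\natural/2}\big[\mathcal{E}^i_p+\mathcal{E}^j_p-2e\max\{{\bf m}_0[f_i],{\bf m}_0[f_j]\}\big]$. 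With it the differential inequality becomes $\tfrac{d}{dt}\E\le-\kappa\sigma^{-\lambda^\natural/2}(\E-C_0)+\sigma^{c}\mathcal{D}$, which yields $\sup_t\E\le\max\{\E(0),\,C_0+O(\sigma^{c+\lambda^\natural/2})\}$ --- a threshold bound, not monotonicity --- and this is what produces the stated constant $6\,I\,e\,({\bf m}_0+1)$. Both repairs are standard, but as stated these two steps do not go through.
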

Once statistical moments are understood for the system, it is possible to obtain higher integrability generation and propagation as stated in the following results.  We remark that the results now refer to individualised components.  

\begin{theorem}[Higher integrability]\label{theo2}
Fix $1\leq i \leq I$, and let $\lambda^\natural>0$ and $\bar{\bar{s}}_{i}>0$.   Assume also that  $\F_0\in U(D_0,E_0)$. Then, it holds for any $p\in(1,\infty)$ and $t_0>0$ that
\begin{equation}\label{theo2-e1}
\|f_i(t)\|_{L^p}\leq C_{t_0}, \qquad t\geq t_0\,.
\end{equation}
The constant $C_{t_0}\lesssim t^{-\beta}_0+1$ (for some $\beta>0$) depends only on $t_0$, the parameters of the model, $D_0$ and $E_0$.  Furthermore, if $f_{i,0}\in L^1_{2n} \cap L^p$ with sufficiently large number of moments $n\geq2$, then
\begin{equation}\label{theo2-e2}
\sup_{t\geq  0}\,\|f_i(t)\|_{L^p}\leq \max\Big\{\|f_{i,0}\|_{L^p}\,,\, C\Big\}\,.
\end{equation}
The positive constant $C$ depends, in addition, on the $L^{1}_{2n}$ norm of $f_{i,0}$.  The number of moments can be taken as $2n\geq r_s:=\frac{3\bar{\bar{\lambda}}_i}{\bar{\bar s}_i}$.
\end{theorem}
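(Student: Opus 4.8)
The proof follows the $L^{p}$-generation scheme of \cite{RA}. Fix $i$ and set $Y(t):=\|f_i(t)\|_{L^p}^p=\|f_i^{p/2}(t)\|_{L^2}^2$. The plan is to derive a scalar differential inequality
$$Y'(t)\le -c_0\,Y(t)^{1+\de}+G_i(t),\qquad \de=\de(p)>0,$$
in which $c_0>0$ depends only on the model parameters, $D_0$, $E_0$, $\|f_{i,0}\|_{L^1}$, and $G_i(t)$ is a fixed positive power of a polynomial moment of $f_i$ of order at most $r_s$; both assertions then follow from an elementary superlinear-ODE comparison of the kind gathered in Section~5. Indeed, such an inequality forces, for every $\t_0>0$,
$$Y(t)\le\max\Big\{\big(c_0^{-1}\sup\nolimits_{[\t_0,\I)}G_i\big)^{1/(1+\de)},\ \big(c_0\de(t-\t_0)\big)^{-1/\de}\Big\},\qquad t>\t_0,$$
with no reference whatsoever to the size of $Y$ near $\t_0$. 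Applied on $[t_0/2,\I)$ this gives \eqref{theo2-e1} with $C_{t_0}\lec t_0^{-\be}+1$; if moreover $Y(0)<\I$ and $G_i$ is bounded on $[0,\I)$, it gives $\sup_{t\ge0}Y(t)\le\max\{Y(0),(c_0^{-1}\sup G_i)^{1/(1+\de)}\}$, which is \eqref{theo2-e2}.

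\textbf{Reduction to the dissipation.} Since only forward scattering is present, $f_i$ solves \eqref{Bolt-i} with the remaining species entering merely as prescribed coefficients. Testing \eqref{Bolt-i} against $p\,f_i^{p-1}$ and using the weak form of each $Q_{ij}$ with $\psi_i=f_i^{p-1}$ yields
$$Y'(t)=p\sum_{j=1}^I\int_{\R^3}\!\int_{\R^3}\!\int_{\S^2}f_i(v)\,f_j(v_*)\big(f_i^{p-1}(v')-f_i^{p-1}(v)\big)B_{ij}(|u|,\hat u\cdot\si)\,d\si\,dv_*\,dv.$$
I would then invoke the uniform coercive estimate established in Section~4 (Carleman/cancellation decomposition of the non-cutoff operator, the hypothesis $\bar{\bar{s}}_i>0$ furnishing the fractional gain), keep in the dissipative part only the index $j$ realising $s_{ij}=\bar{\bar{s}}_i$, and bound the rest crudely, to arrive at
$$Y'(t)\le -c_1\,\big\|f_i^{p/2}\big\|_{\dot H^{\bar{\bar{s}}_i/2}(\R^3)}^2+C_2\Big(\sum_{j=1}^I{\bf m}_{\la_{ij},j}(t)\Big)\int_{\R^3}\langle v\rangle_i^{\bar{\bar{\la}}_i}f_i^p\,dv.$$
The structural point, valid precisely because the scattering is forward — so that $f_j$, $j\neq i$, never appears raised to the power $p-1$, and the swept collision directions already fill $\mathbb S^2$ once $f_j$ merely carries mass in a ball — is that the companion species enter only through the conserved quantities: $c_1$ depends only on a lower bound $D_0$ for the mass and an upper bound $E_0$ for the energy of the $f_j$'s, and the error prefactor through ${\bf m}_{\la_{ij},j}\le{\bf m}_{2,j}\le E_0$. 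Hence no pointwise lower bound, and in particular no entropy, of the companion species is needed, and the $L^{p}$ analysis genuinely decouples across species.

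\textbf{Interpolation and the moment threshold.} The Sobolev embedding $\dot H^{\bar{\bar{s}}_i/2}(\R^3)\hookrightarrow L^q(\R^3)$, $q=\tfrac{6}{3-\bar{\bar{s}}_i}>2$, together with $\|f_i\|_{L^p}\le\|f_i\|_{L^1}^{1-a}\|f_i\|_{L^{pq/2}}^{a}$, $a=\tfrac{p-1}{p-2/q}\in(0,1)$ (the conserved mass $\|f_i\|_{L^1}=\|f_{i,0}\|_{L^1}$ providing the endpoint), gives $\|f_i^{p/2}\|_{\dot H^{\bar{\bar{s}}_i/2}}^{2}\gec c\,Y^{1/a}$, so the dissipation dominates a superlinear power of $Y$ with exponent $1+\de=1/a$. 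For the error term, splitting $\langle v\rangle_i^{\bar{\bar{\la}}_i}f_i^p=\langle v\rangle_i^{\bar{\bar{\la}}_i}(f_i^{p/2})^{2\ga}(f_i^{p/2})^{2(1-\ga)}$ and applying H\"older with $\big(\tfrac q{2\ga},\tfrac q{q-2\ga}\big)$, with $\ga=\ga(p)\in(0,1)$ tuned so the second factor reduces to a \emph{linear} moment of $f_i$, produces
$$\int_{\R^3}\langle v\rangle_i^{\bar{\bar{\la}}_i}f_i^p\,dv\le\big\|f_i^{p/2}\big\|_{L^q}^{2\ga}\,{\bf m}_{N_p,i}(t)^{1/r(p)},\qquad N_p=\bar{\bar{\la}}_i\,r(p),\quad \sup_{p>1}N_p=\bar{\bar{\la}}_i\cdot\tfrac{3}{\bar{\bar{s}}_i}=r_s,$$
using $\tfrac{q}{q-2}=\tfrac{3}{\bar{\bar{s}}_i}$. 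Since $2\ga<2$ and $\|f_i^{p/2}\|_{L^q}\lec\|f_i^{p/2}\|_{\dot H^{\bar{\bar{s}}_i/2}}$, Young's inequality absorbs this into $\tfrac{c_1}{2}\|f_i^{p/2}\|_{\dot H^{\bar{\bar{s}}_i/2}}^2$ at the price of a fixed power of ${\bf m}_{r_s,i}(t)$; this is exactly where the requirement $2n\ge r_s=3\bar{\bar{\la}}_i/\bar{\bar{s}}_i$ comes from, and because $r_s=\sup_{p>1}N_p$ one choice of $n$ works for all $p\in(1,\I)$. Collecting the two bounds yields the ODE of the first paragraph with $G_i(t)=C(1+{\bf m}_{r_s,i}(t))^{\mu}$, $\mu>0$. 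For generation one inserts ${\bf m}_{r_s,i}(t)\le{\bf m}_{r_s}[\F](t)\lec 1+t^{-(r_s-2)/\la^\natural}$ from Theorem~\ref{Mom}, so on $[t_0/2,\I)$ the forcing is bounded by a constant depending on $t_0$ of size $\lec t_0^{-\be}+1$; for propagation one uses the uniform bound $\sup_{t\ge0}{\bf m}_{2n}[\F](t)<\I$ of Theorem~\ref{Mom}, valid once the initial mixture carries $2n\ge r_s$ moments, which makes $G_i$ bounded on $[0,\I)$.

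\textbf{Main obstacle.} The delicate point is the interpolation step: the H\"older exponents must be chosen so that (i) the residual of the error term is a \emph{linear} polynomial moment of $f_i$ — controlled by Proposition~\ref{Momlem} and Theorem~\ref{Mom} — and not a higher $L^q$-norm that would feed back into the estimate, and (ii) the required moment order stays, uniformly in $p$, below the single value $r_s$, which pins down the dependence $r_s=3\bar{\bar{\la}}_i/\bar{\bar{s}}_i$ on the dimension and on the extreme potential/singularity rates of the $i$-th species. Together with verifying that the Section~4 coercivity constant really does depend on the companion species only through their conserved low-order moments (the forward geometry being essential here), this is where the work concentrates; everything else is the classical regularising-ODE mechanism.
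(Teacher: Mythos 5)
Your proposal is correct and follows essentially the same route as the paper: the weak form tested against $p\,f_i^{p-1}$, the uniform coercive estimate of Section~4 in its $L^p$ incarnation (Corollary \ref{LemLp}), Sobolev embedding with $q_s=\tfrac{6}{3-\bar{\bar s}_i}$ plus interpolation against mass and a polynomial moment of order at most $\tfrac{3\bar{\bar\lambda}_i}{\bar{\bar s}_i}$, Young absorption, and the superlinear ODE comparison of \cite[Lemma 18]{AG} for both generation and propagation. One small caveat: the coercivity constant does in fact use the $L\log L$ bound of the companion species (via Lemma \ref{Kij} and \cite[Lemma 3]{ADVW}), so your remark that ``no entropy of the companion species is needed'' is inaccurate, though harmless since that bound is part of $E_0$.
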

In light of Theorem \ref{theo2}, one has the following result as a consequence.
\begin{cor}[Baseline regularity]\label{CorLp}
Fix $1\leq i \leq I$.  Assume $\bar{\lambda}_i$, $\bar{\bar{s}}_{i}>0$ and $\F_0\in U(D_0,E_0)$.  Then, 
$$
\int^{t}_{t_0}\|\langle v\rangle_i^{\frac{\overline{\lambda}_i}{2}}f_i(\tau)\|_{H^{\frac{\overline{\overline{s}}_{i}}{2}}}^2\,d\tau \leq C_{t_0}(1+t)\,,\qquad  t\geq t_0>0\,,
$$
where the constant $C_{t_0}\lesssim t^{-\beta}_0+1$ (for some $\beta>0$) depends only on $t_0$, the parameters of the model, $D_0$ and $E_0$.

\smallskip
\noindent
Furthermore, if $f_{i,0}\in L^1_{2n} \cap L^2$ with sufficiently large number of moments $2n\geq \frac{3\bar{\bar{\lambda}}_i}{\bar{\bar s}_i}$, then
$$
\int^{t}_{0}\|\langle v\rangle_i^{\frac{\overline{\lambda}_i}{2}}f_i(\tau)\|_{H^{\frac{\overline{\overline{s}}_{i}}{2}}}^2\,d\tau \leq C\big(\| f_{i,0} \|_{L^{2}} + t\big)\,,\qquad  t\geq0\,,
$$
where $C>0$ depends, in addition, on the $L^{1}_{2n}$ norm of $f_{i,0}$.
\end{cor}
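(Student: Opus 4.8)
The plan is to derive the $L^2$ energy identity for the $i$-th equation, to bound the collision contribution from below by the uniform coercive estimate established in Section 4, and then to integrate in time, the quantity $\int_{t_0}^{t}\|\langle v\rangle_i^{\bar\lambda_i/2}f_i(\tau)\|_{H^{\bar{\bar s}_i/2}}^2\,d\tau$ being exactly the dissipation that is produced --- and immediately absorbed --- when proving Theorem \ref{theo2} in the case $p=2$. So the first step is to test the $i$-th component of \eqref{B-E1} against $f_i$, which gives
$$\tfrac12\,\tfrac{d}{dt}\|f_i(t)\|_{L^2}^2=\sum_{j=1}^{I}\int_{\R^3}Q_{ij}(f_i,f_j)(v)\,f_i(t,v)\,dv .$$
By conservation of mass per species together with conservation of total momentum and energy and the entropy bound \eqref{log}--\eqref{LlogL}, the mixture $\F(t)$ stays, uniformly in $t\ge0$, in a set of the form $U(D_0,E_0')$; in particular each frozen background $f_j(t)$ keeps mass bounded below by $D_0$ and has bounded energy and $L\log L$ norm. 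Hence the uniform coercive estimate of Section 4 applies with $t$-independent constants and yields, for every $1\le j\le I$, a bound of the form
$$\int_{\R^3}Q_{ij}(f_i,f_j)\,f_i\,dv\ \le\ -\,c_{ij}\,\big\|\langle v\rangle_i^{\lambda_{ij}/2}f_i\big\|_{H^{s_{ij}/2}}^2\ +\ C_{ij}\,\mathcal R_{ij}[f_i],$$
where the remainder $\mathcal R_{ij}[f_i]$ consists of weighted $L^1$ and $L^2$ norms of $f_i$ of weight at most $\lambda_{ij}/2\le1$.

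The next step is to dispose of $\mathcal R_{ij}$. Its $L^1$ contribution is bounded by $\|\F(t)\|_{L^1_2}$, hence by a constant; its $L^2$ contribution, being of weight $\le\lambda_{ij}/2$, is interpolated between $\|\langle v\rangle_i^{\lambda_{ij}/2}f_i\|_{H^{s_{ij}/2}}$ --- the \emph{same} weight, using the Sobolev embedding $H^{s_{ij}/2}(\R^3)\hookrightarrow L^{6/(3-s_{ij})}(\R^3)$ --- and $\|\langle v\rangle_i^{\lambda_{ij}/2}f_i\|_{L^1}\le\|\F(t)\|_{L^1_2}$, so that Young's inequality absorbs $C_{ij}\mathcal R_{ij}[f_i]$ into $\tfrac12 c_{ij}\|\langle v\rangle_i^{\lambda_{ij}/2}f_i\|_{H^{s_{ij}/2}}^2$ up to a constant depending only on the model, $D_0$ and $E_0$. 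Among the remaining (nonpositive) dissipative terms I keep only the index $j^{\ast}$ with $s_{ij^{\ast}}=\bar{\bar s}_i$, drop the rest, and use $\langle v\rangle_i\ge1$ together with $\lambda_{ij^{\ast}}\ge\bar\lambda_i$ --- multiplication by the smooth bounded weight $\langle v\rangle_i^{(\bar\lambda_i-\lambda_{ij^{\ast}})/2}$ being continuous on $H^{\bar{\bar s}_i/2}$, since $\bar{\bar s}_i/2<1$ --- to obtain the differential inequality
$$\tfrac12\,\tfrac{d}{dt}\|f_i(t)\|_{L^2}^2\ \le\ -\,c\,\big\|\langle v\rangle_i^{\bar\lambda_i/2}f_i(t)\big\|_{H^{\bar{\bar s}_i/2}}^2\ +\ C_\ast ,$$
with $c,C_\ast>0$ depending only on the parameters of the model, $D_0$ and $E_0$.

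Integrating this on $[t_0,t]$ and discarding the nonnegative term $\tfrac12\|f_i(t)\|_{L^2}^2$ gives
$$c\int_{t_0}^{t}\big\|\langle v\rangle_i^{\bar\lambda_i/2}f_i(\tau)\big\|_{H^{\bar{\bar s}_i/2}}^2\,d\tau\ \le\ \tfrac12\|f_i(t_0)\|_{L^2}^2+C_\ast(t-t_0) .$$
For $t_0>0$, Theorem \ref{theo2} applied with $p=2$ bounds $\|f_i(t_0)\|_{L^2}^2$ by $C_{t_0}\lesssim t_0^{-\beta}+1$, which is the first assertion. For the propagation statement one takes $t_0=0$: the hypotheses $f_{i,0}\in L^1_{2n}\cap L^2$ with $2n\ge 3\bar{\bar\lambda}_i/\bar{\bar s}_i$ are exactly those under which Theorem \ref{theo2} with $p=2$ guarantees $f_i(t)\in L^2$ for all $t\ge0$, so that the energy identity is legitimate from the initial time, and the same integration now produces $\int_0^{t}\|\langle v\rangle_i^{\bar\lambda_i/2}f_i(\tau)\|_{H^{\bar{\bar s}_i/2}}^2\,d\tau\le C(\|f_{i,0}\|_{L^2}^2+t)$, which is the stated bound up to renaming constants (using in addition the uniform $L^2$ bound of Theorem \ref{theo2} to replace $\|f_{i,0}\|_{L^2}^2$ by $\|f_{i,0}\|_{L^2}$ on the right).

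The step I expect to be the main obstacle is the disposal of the coercive remainder $\mathcal R_{ij}$ with a favourable constant. It must be absorbed term by term in $j$, so that the weight $\lambda_{ij}/2$ of the error is matched against the dissipation carrying precisely that weight, rather than against the common lowest weight $\bar\lambda_i/2$ to which the dissipation is downgraded only at the very end; this matching, and the fact that one must isolate $j^{\ast}$ because the exponents $\lambda_{ij}$ and $s_{ij}$ genuinely differ from species to species, are exactly the places where the lack of symmetry of the collision laws stressed in the introduction makes the argument nontrivial. If the precise form of the Section 4 estimate should produce a remainder of weight larger than $\lambda_{ij}/2$, one would absorb it instead at the cost of a polynomial moment of $f_i$; the numerical threshold $2n\ge 3\bar{\bar\lambda}_i/\bar{\bar s}_i$, together with $\bar{\bar s}_i<2$, is what keeps such a moment within the order furnished by Theorems \ref{Mom} and \ref{theo2}, and the dependence of the constant in the second part of the statement on the $L^1_{2n}$ norm of $f_{i,0}$ reflects this.
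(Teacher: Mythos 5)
Your proposal is correct and follows essentially the same route as the paper: the $L^2$ energy inequality from the coercive estimate of Corollary \ref{LemLp} (i.e.\ \eqref{EqLp} with $p=2$), absorption of the weighted $L^2$ remainder into the dissipation via the interpolation \eqref{EqLp2} together with the Sobolev embedding and Young's inequality, and then time integration using Theorem \ref{theo2} to control $\|f_i(t_0)\|_{L^2}$ and Theorem \ref{Mom} for the moments. The only caveat is that the remainder in the paper's coercive estimate carries the weight $\bar{\bar{\lambda}}_i/2$ (the maximum over $j$) rather than $\lambda_{ij}/2$, so it is your anticipated fallback --- interpolating against an $L^1$ moment of order up to $3\bar{\bar{\lambda}}_i/(2\bar{\bar{s}}_i)$ --- that is the route actually taken, which is exactly why the constants depend on moment generation (first part) or propagation (second part).
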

The case $p=+\infty$ is treated in our last result.
\begin{theorem}[Boundedness]\label{theo3}
Fix $1\leq i \leq I$, and let $\lambda^\natural>0$ and $\bar{\bar{s}}_{i}>0$.  Assume also that $\F_0\in U(D_0,E_0)$. Then, given $t_0>0$ there exists a constant $C_{t_0}>0$ depending on the parameters of the model, $D_0$, $E_0$, and $t_0$, such that
$$
\|f_i(t)\|_{L^\infty}\leq C_{t_0},\qquad t\geq t_0\,.
$$
The constant is controlled as $C_{t_0}\lesssim(t^{-\beta}_0+1)$ (for some $\beta>0$).  Furthermore, if additionally $f_{i,0}\in L^{1}_{2n}\cap L^\infty$ with sufficiently large number of moments $2n\geq \frac{18\bar{\bar{\lambda}}_i}{\bar{\bar s}_i}$, then
$$
\sup_{t\geq 0}\|f_i(t)\|_{L^\infty} \leq \max\Big\{2\,\|f_{i,0}\|_{L^\infty}, C\Big\}\,,
$$
for a positive constant $C$ depending additionally on the $L^{1}_{2n}$ norm of $f_{i,0}$.
\end{theorem}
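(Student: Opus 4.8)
The plan is to reduce to the scalar equation $\partial_t f_i=\sum_{j=1}^I Q_{ij}(f_i,f_j)$ for the fixed index $i$, viewing the other densities $f_j$ as \emph{given} coefficients whose statistical moments are controlled by Theorem~\ref{Mom} and whose $L^q$ norms are controlled by Theorem~\ref{theo2}. Following \cite{RA}, I would first record the non-cutoff splitting of each bilinear term: with the cancellation lemma written in the (less symmetric) mixture geometry, $Q_{ij}(f_i,f_j)=\mathcal L_{ij}[f_i]-\nu_{ij}f_i+\mathcal R_{ij}[f_i]$, where $\mathcal L_{ij}$ is nonlocal of order $s_{ij}$ carrying a $|v|^{\lambda_{ij}}$ weight, the collision frequency obeys $c\,\langle v\rangle_i^{\overline{\lambda}_i}\le\sum_j\nu_{ij}(v)\le C\,\langle v\rangle_i^{\overline{\overline{\lambda}}_i}$ (the lower bound using $D_0$ and $E_0$), and $\mathcal R_{ij}$ gathers bounded, integrable-kernel remainders. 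Summing over $j$ and testing against $f_i^{p-1}$, the uniform coercive estimate of Section~4 yields, for every $p\in[2,\infty)$,
\[
\tfrac{1}{p}\,\tfrac{d}{dt}\|f_i(t)\|_{L^p}^p+c_0\,\big\|\langle v\rangle_i^{\overline{\lambda}_i/2}f_i^{p/2}\big\|_{\dot H^{\overline{\overline{s}}_i/2}}^2+c_1\,\big\|\langle v\rangle_i^{\overline{\lambda}_i/p}f_i\big\|_{L^p}^p\ \le\ C_p\,\Theta_p(t),
\]
where $\Theta_p$ is a polynomial in weighted $L^1$ moments of $\F$ and in lower $L^q$ norms of $f_i$, all finite by Theorems~\ref{Mom}--\ref{theo2}.

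The $L^\infty$ bound is then obtained from a De~Giorgi--Moser iteration on level sets. For $K>0$ put $f_K:=(f_i-K)_+$ and decompose $f_i=f_K+(f_i\wedge K)$ inside the first slot of the bilinear operator: the part $Q_{ij}(f_K,f_j)$ reproduces the coercive and damping terms for $f_K$, while $Q_{ij}(f_i\wedge K,f_j)$ is bounded pointwise on $\{f_i>K\}$ by $C\,K\,\langle v\rangle_i^{\lambda_{ij}}$ and hence contributes an error that, by Chebyshev applied to the $L^q$ bounds of Theorem~\ref{theo2}, is a negative power of $K$ times weighted moments. Using $\dot H^{\overline{\overline{s}}_i/2}(\R^3)\hookrightarrow L^{6/(3-\overline{\overline{s}}_i)}$ to upgrade the dissipation, interpolating against the weighted $L^1$ moments of Theorem~\ref{Mom} to restore the $\langle v\rangle_i$-weight, and tracking the polynomial weight lost at each step, one arrives at a nonlinear recursion $E_{k+1}\le C^{k}E_k^{1+\delta}$ for energies $E_k=\sup_{t\ge t_k}\|f_{K_k}(t)\|_{L^2}^2+c\!\int_{t_k}^\infty\!\|\langle v\rangle_i^{\overline{\lambda}_i/2}f_{K_k}\|_{\dot H^{\overline{\overline{s}}_i/2}}^2\,d\tau$ along levels $K_k=K(2-2^{-k})$ and times $t_k\downarrow t_0/2$. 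The recursion forces $E_\infty=0$, i.e. $f_i\le 2K$ for $t\ge t_0$, once $K$ exceeds a threshold fixed by the generation bounds of Theorems~\ref{Mom}--\ref{theo2} (hence $\lesssim t_0^{-\beta}+1$), the parameters of the model, $D_0$ and $E_0$; this is exactly the claimed bound $\|f_i(t)\|_{L^\infty}\le C_{t_0}$.

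For the propagation statement one runs the same iteration anchored at $t=0$, with the base level chosen as $\max\{\|f_{i,0}\|_{L^\infty},C\}$; the factor $2$ in $\sup_{t\ge0}\|f_i\|_{L^\infty}\le\max\{2\|f_{i,0}\|_{L^\infty},C\}$ is the supremum $\sup_k K_k$ of the self-improving level sequence. The hypothesis $2n\ge 18\,\overline{\overline{\lambda}}_i/\overline{\overline{s}}_i$ is precisely the total polynomial-weight budget the iteration consumes: each step loses a fixed fraction of $\langle v\rangle_i$-weight through the weighted Sobolev interpolation, and when the iteration is pushed all the way to $p=\infty$ in dimension three this budget is a finite (here six-fold) multiple of the $2n\ge 3\,\overline{\overline{\lambda}}_i/\overline{\overline{s}}_i$ that Theorem~\ref{theo2} needs for finite exponents; propagation of these moments is guaranteed by Theorem~\ref{Mom} because $2n$ is an even integer. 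That the scattering is purely forward is used implicitly: no lower-order backscattering contribution has to be carried through the iteration.

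The main obstacle is two-fold. First, the asymmetry of the collision laws \eqref{collision-laws} under $\sigma\mapsto-\sigma$ forces the Carleman-type representation and the cancellation lemma behind both the coercive estimate and the bound on $\mathcal R_{ij}$ to be set up pair by pair; this is the bulk of the technical work, but it is already packaged into the uniform coercive estimate of Section~4 and into Corollary~\ref{CorLp}, on which I rely. Second, and more delicate within the iteration itself, is the weight accounting: one must check that the $\langle v\rangle_i^{\lambda_{ij}}$ growth produced by the collision frequency and the kernel, once processed through the weighted Sobolev embedding, is absorbed --- with a gap decaying geometrically in $k$ --- by the finite moment budget $2n\ge 18\,\overline{\overline{\lambda}}_i/\overline{\overline{s}}_i$, and that the truncation errors from $f_i\wedge K$ are genuinely small, uniformly in $k$, powers of $1/K$. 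Since the coupling to the other species enters only through their moments and $L^1$ mass, never through their higher integrability, the argument --- and therefore the statement --- is genuinely componentwise.
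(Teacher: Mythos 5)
Your proposal follows essentially the same route as the paper: an $L^2$ level-set energy inequality (the paper's Lemma \ref{Linfty}), a De Giorgi iteration over increasing levels and shrinking time cut-offs, closed by the Sobolev embedding of the $H^{\overline{\overline{s}}_i/2}$ dissipation together with a Chebyshev/moment interpolation, then time-translation invariance to remove the dependence on the horizon $T$, and the same iteration anchored at $t=0$ for the propagation part. One caveat: your claimed pointwise bound $|Q_{ij}(f_i\wedge K,f_j)(v)|\lesssim K\langle v\rangle_i^{\lambda_{ij}}$ on $\{f_i>K\}$ does not hold in the non-cutoff setting (the operator applied to a merely bounded truncation is not pointwise controlled because of the angular singularity); the $K$-linear contribution must instead be estimated in weak form after testing against $f^+_K$, via the cancellation lemma, exactly as in the paper's Lemma \ref{Linfty}, which produces the required term $K\,\|\langle\cdot\rangle_i^{\overline{\overline{\lambda}}_i}f^+_{K}\|_{L^1}$.
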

Let us remark that the fact that we can infer properties of higher integrability/regularity emergence and propagation for individualised components in Theorem \ref{theo2}, Corollary \ref{CorLp}, and Theorem \ref{theo3} is a consequence of the forward scattering hypothesis in the $b^{ij}$.   As previously mentioned, including the backscattering in the interactions adds lower order terms that can be managed at a technical price.  We refer to \cite{CGP} for a detailed analysis on the handling of ``cutoff" terms.

\section{Generation and propagation of statistical moments}
In this section, the proof are inspired from classical approaches, in particular we extend the Povzner lemma given in \cite{NF} to this setting of system of equations.
\subsection{Povzner Lemma for long range interactions}
\begin{lem}\label{Povlem}
Let the cross section $\B_{ij}$ satisfied \eqref{B1} and \eqref{B2}. Let $v'$ and $v'_*$ the collision laws defined in \eqref{collision-laws} with $m_i, m_j>0$.   Then, for all integers $n\geq 2$ the following estimate holds \begin{align*}
\int_0^\pi\int_0^{2\pi}\Big(\langle v'\rangle_i^{2n} + &\langle v_*'\rangle_j^{2n}-\langle v\rangle_i^{2n}- \langle v_*\rangle_j^{2n}\Big)d\varphi \, \beta_{ij}(\theta) \, d\theta\leq -\lambda^1_{ij}n^{s_{ij}/2}\Big(\langle v\rangle_i^{2n}+\langle v_*\rangle_j^{2n}\Big)\\
&+ \lambda^2_{ij}\sum^{n-1}_{a=1}\Big(\begin{matrix}n\\a\end{matrix}\Big)\Big[\frac{n^{s_{ij}/2}}{(n-a)^{s_{ij}/2+1}}+\frac{1}{a}\Big] \times \Big(\langle v\rangle_i^{2a}\langle v_*\rangle_j^{2(n-a)}+\langle v\rangle_i^{2(n-a)}\langle v_*\rangle_j^{2a}\Big),
\end{align*}
where 
$$\lambda^1_{ij}=\frac{\kappa^1_{ij}}{2-s_{ij}}\frac{m_im_j(m_i^2+m_j^2+m_im_j)}{(m_i+m_j)^4}\,,\qquad \lambda^2_{ij}=\zeta \frac{(m_i+m_j)^2}{m_im_j}.$$
\end{lem}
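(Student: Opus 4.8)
The plan is to reduce the four--term angular average to a pointwise (in $v,v_*$) bound on the sphere via the bracket energy identity, and then to prove that bound by cutting the polar integral at the scattering scale $\theta\sim n^{-1/2}$, where the singular kernel produces the announced gain $n^{s_{ij}/2}$. Since \eqref{collision-laws} is elastic, $m_i|v'|^2+m_j|v_*'|^2=m_i|v|^2+m_j|v_*|^2$, hence the weighted brackets satisfy the \emph{exact} identity $\langle v'\rangle_i^2+\langle v_*'\rangle_j^2=\langle v\rangle_i^2+\langle v_*\rangle_j^2=:R^2$, with moreover $\langle v'\rangle_i^2,\langle v_*'\rangle_j^2\le R^2-1$. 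Writing $\langle v'\rangle_i^2=R^2X$, $\langle v_*'\rangle_j^2=R^2(1-X)$ with $X=X(\theta,\varphi)\in(0,1)$, and noting that at $\theta=0$ (where $v'=v$, $v_*'=v_*$) one has $X=X_0:=\langle v\rangle_i^2/R^2$, the integrand in the statement equals $R^{2n}\big(g_n(X)-g_n(X_0)\big)$ with $g_n(x):=x^n+(1-x)^n$, because $R^{2n}g_n(X_0)=\langle v\rangle_i^{2n}+\langle v_*\rangle_j^{2n}$. Since $R^{2n}=\sum_{a=0}^n\binom na\langle v\rangle_i^{2a}\langle v_*\rangle_j^{2(n-a)}$, I split $R^{2n}$ into its ``pure'' part $\langle v\rangle_i^{2n}+\langle v_*\rangle_j^{2n}$, which will carry the negative gain, and its ``mixed'' part $M:=\sum_{a=1}^{n-1}\binom na\langle v\rangle_i^{2a}\langle v_*\rangle_j^{2(n-a)}$, which will absorb all positive remainders; everything then reduces to an angular estimate for $g_n(X)-g_n(X_0)$.

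For that estimate, a direct computation from \eqref{collision-laws}, with $V=\tfrac{m_iv+m_jv_*}{m_i+m_j}$ the conserved centre--of--mass velocity, $u=v-v_*$, and $e_\perp(\varphi)\perp\widehat u$ a unit vector with $\int_0^{2\pi}(w\cdot e_\perp(\varphi))\,d\varphi=0$ for all $w$, yields
\[
\langle v'\rangle_i^2=\langle v\rangle_i^2-\frac{4m_im_j}{m_i+m_j}\,(V\cdot u)\,\sin^2\tfrac\theta2+\frac{4m_im_j}{m_i+m_j}\,|u|\,\big(v\cdot e_\perp(\varphi)\big)\,\sin\tfrac\theta2\cos\tfrac\theta2,
\]
together with the mirror formula for $\langle v_*'\rangle_j^2$ under $i\leftrightarrow j$, $v\leftrightarrow v_*$ and a sign change, their sum being $R^2$ because $u\cdot e_\perp=0$. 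I then split $\int_0^\pi=\int_0^{\theta_n}+\int_{\theta_n}^\pi$ with $\theta_n=n^{-1/2}$. On $(\theta_n,\pi)$ the mixing of the two bracket energies gives, after $\varphi$--averaging, an integrand $\le -c\,g_n(X_0)$ whenever $X_0$ is away from $\tfrac12$, plus a positive remainder that is a finite combination of the monomials $\langle v\rangle_i^{2a}\langle v_*\rangle_j^{2(n-a)}$ (handled via the expansion of $g_n$ in $\cos\varphi$ and the upper bound in \eqref{B2}); the lower bound in \eqref{B2} gives $\int_{\theta_n}^\pi\beta_{ij}\gtrsim\kappa^1_{ij}\,\theta_n^{-s_{ij}}=\kappa^1_{ij}\,n^{s_{ij}/2}$. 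On $(0,\theta_n)$ a second--order expansion of $g_n$, combined with the vanishing of the $\varphi$--mean of the odd term above (so that it contributes only at order $\theta^2$, not $\theta$, which is essential since $\int_0^{\theta_n}\theta\,\beta_{ij}$ may diverge for $s_{ij}\ge1$), yields $\langle g_n(X)-g_n(X_0)\rangle_\varphi\le -c'\,n\,\theta^2\,g_n(X_0)+(\text{positive remainder})$, and $\int_0^{\theta_n}\theta^2\beta_{ij}\lesssim\tfrac{\kappa^2_{ij}}{2-s_{ij}}\,\theta_n^{2-s_{ij}}$ so that $n\,\theta_n^{2-s_{ij}}=n^{s_{ij}/2}$: this is the origin of the factor $\tfrac1{2-s_{ij}}$ in $\lambda^1_{ij}$, and carrying the mass weight $\tfrac{4m_im_j}{m_i+m_j}$ through the $\varphi$--averages of both outgoing particles (it enters once through the $\sin^2\tfrac\theta2$ term and once squared through the odd term) produces $\tfrac{m_im_j(m_i^2+m_j^2+m_im_j)}{(m_i+m_j)^4}=\tfrac{m_im_j}{(m_i+m_j)^2}\big(1-\tfrac{m_im_j}{(m_i+m_j)^2}\big)$.

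Multiplying the angular bound by $R^{2n}$ and recombining completes the proof: the term $R^{2n}g_n(X_0)=\langle v\rangle_i^{2n}+\langle v_*\rangle_j^{2n}$ carries the factor $-\lambda^1_{ij}n^{s_{ij}/2}$, while re--expanding $R^{2n}$ binomially in the positive remainders turns the coarse integrals $\int_0^\pi\theta^a\beta_{ij}\,d\theta\lesssim\kappa^2_{ij}/a$ into the weights $\tfrac1a$ and the matching of the binomial expansion of $X^n$ against the scale $\theta_n$ into the weights $\tfrac{n^{s_{ij}/2}}{(n-a)^{s_{ij}/2+1}}$; symmetrising $a\leftrightarrow n-a$ assembles the combination $\langle v\rangle_i^{2a}\langle v_*\rangle_j^{2(n-a)}+\langle v\rangle_i^{2(n-a)}\langle v_*\rangle_j^{2a}$, with $\lambda^2_{ij}=\zeta\,\tfrac{(m_i+m_j)^2}{m_im_j}$ absorbing the remaining universal and mass constants.

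The main obstacle, and the point where this departs from the scalar argument of \cite{NF}, is the loss of the symmetry $v'\leftrightarrow v_*'$ under $\sigma\to-\sigma$: one cannot estimate a single quantity, so $\langle v'\rangle_i^{2n}$ and $\langle v_*'\rangle_j^{2n}$ carry distinct mass coefficients and must be tracked separately before being recombined, which is why $\lambda^1_{ij},\lambda^2_{ij}$ end up depending on $(m_i,m_j)$ only through the symmetric combinations above. A related subtlety is that $g_n(X)-g_n(X_0)$ is \emph{not} sign--definite: its $\varphi$--average is positive, e.g., when $\langle v\rangle_i^2\approx\langle v_*\rangle_j^2$ (i.e.\ $X_0\approx\tfrac12$, where the ``pure'' part is exponentially small in $n$ and the mixed part dominates) and also for angles where the odd term is active, so the mixed part $M$ together with the weights in $\lambda^2_{ij}$ must be kept generous enough to absorb these positive contributions uniformly in $(v,v_*)$, which is what forces the coarse estimates of the previous paragraph. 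Finally, the $\varphi$--integral of $(\cdot)^n$ cannot be controlled by Jensen's inequality (which points the wrong way for an upper bound): one writes $\langle v'\rangle_i^2=p(\theta)+q(\theta)\cos\varphi$ (up to an irrelevant shift of $\varphi$), expands $(p+q\cos\varphi)^n$ in powers of $\cos\varphi$, and uses $\tfrac1{4^j}\binom{2j}{j}\le1$ for the even $\varphi$--moments; this is the computational core inherited from \cite{NF}.
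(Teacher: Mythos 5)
Your computational skeleton agrees with the paper's: the parametrization of $v'$ and the resulting expansion of $\langle v'\rangle_i^2$ (your formula in terms of $V\cdot u$ and $v\cdot e_\perp$ is a correct rewriting of the paper's expansion in $\langle v\rangle_i^2,\langle v_*\rangle_j^2, v\cdot v_*, v\cdot\Gamma$), the vanishing of the $\varphi$-average of odd powers of the oscillating term together with $\tfrac{1}{4^j}\binom{2j}{j}\le 1$ for the even ones, the cutoff at $\theta\sim n^{-1/2}$ producing the factor $n^{s_{ij}/2}/(2-s_{ij})$, and the mass identity behind $\lambda^1_{ij}$ are exactly the ingredients of the paper's proof. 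The gap lies in the organizing decomposition. You reduce everything to the angular estimate $\langle g_n(X)-g_n(X_0)\rangle_\varphi\le -c'\,n\,\theta^2 g_n(X_0)+(\text{absorbable remainder})$, with $X$ the velocity- and $\varphi$-dependent fraction of the total bracket energy carried by $v'$. That estimate is false uniformly in $(v,v_*)$: since $x=\tfrac12$ is the global \emph{minimizer} of $g_n$, whenever $X_0=\tfrac12$ one has $g_n(X)-g_n(X_0)\ge 0$ for every $(\theta,\varphi)$ while $g_n(X_0)=2^{1-n}>0$; more generally the deterministic first-order Taylor term $g_n'(X_0)(1-A)(1-2X_0)$ degenerates near $X_0=\tfrac12$ and the second-order term is \emph{positive} of size $n^2\theta^2$ there. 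Consequently the ``pure part'' of $R^{2n}$ cannot carry the negative gain near $X_0=\tfrac12$; the entire term $-\lambda^1_{ij}n^{s_{ij}/2}\big(\langle v\rangle_i^{2n}+\langle v_*\rangle_j^{2n}\big)$ must then be manufactured out of the positive slack in the mixed monomials, which is precisely the content of the lemma in that regime and which your sketch only gestures at (``must be kept generous enough to absorb these positive contributions uniformly'').

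The paper sidesteps this by a different decomposition: it writes $\langle v'\rangle_i^2=A(\theta)\langle v\rangle_i^2+(1-A(\theta))\langle v_*\rangle_j^2+(\text{cross})+(\text{osc})$ with $A(\theta)=\frac{m_i^2+m_j^2+2m_im_j\cos\theta}{(m_i+m_j)^2}$ \emph{independent of} $(v,v_*,\varphi)$, and isolates in the multinomial expansion the two diagonal terms, whose total coefficient in front of $\langle v\rangle_i^{2n}+\langle v_*\rangle_j^{2n}$ is exactly $A(\theta)^n+(1-A(\theta))^n$. The gain is then the pointwise, velocity-independent inequality $1-A^n-(1-A)^n\ge c\,n\,\theta^2$ for $\theta\le n^{-1/2}$, and the remainder $b_n$ contains only mixed monomials by construction, so no absorption of pure terms is ever needed. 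If you insist on the $g_n(X)$ formulation you must split into the cases $|X_0-\tfrac12|$ large/small and, in the latter, extract the negative term from the mixed part --- at which point you are effectively redoing the multinomial bookkeeping. A second, related omission: the passage from ``positive remainder'' to the precise weights $\binom{n}{a}\big[\tfrac{n^{s_{ij}/2}}{(n-a)^{s_{ij}/2+1}}+\tfrac1a\big]$ is asserted rather than derived; in the paper this is the Beta-function evaluation of $J_{n,a}=\int_0^\pi A^a(1-A)^{n-a}\beta_{ij}\,d\theta$ combined with Stirling's formula, and without that computation the specific form of the upper bound in the statement is not obtained.
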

\begin{proof}
In this proof we need to parametrize the collision laws \eqref{collision-laws}, see for example \cite{NF} or \cite{FM}. For every $X\in \mathbb{R}^3\setminus\{0\}$, we introduce $I(X),\; J(X)\in \mathbb{R}^3$ such that $\big(\frac{X}{|X|}, \frac{I(X)}{|X|}, \frac{J(X)}{|X|}\big)$ is an orthonormal basis of $\mathbb{R}^3.$ We also put $I(0)=J(0)=0$.  For $X, v, v_*\in \mathbb{R}^3,\; \theta \in [0, 2\pi)$ and $\varphi\in [0, 2\pi)$, we set
$$
\begin{cases}
\Gamma (X,\varphi)= \cos\varphi I(X)+ \sin \varphi J(X)\,,\\
\sigma= \cos\theta\cdot\frac{(v-v_*)}{|v-v_*|}+\sin\theta \frac{\Gamma(v-v_*,\varphi)}{|v-v_*|},\\
v'= v- \frac{m_j}{m_i+m_j}(1-\cos \theta)(v-v_*) + \frac{m_j}{m_i+m_j}\sin \theta\Gamma(v-v_*, \varphi),\\
v'_*= v_*+ \frac{m_i}{m_i+m_j}(1-\cos \theta)(v-v_*) - \frac{m_i}{m_i+m_j}\sin \theta\Gamma(v-v_*, \varphi)\,,
\end{cases}
$$
where $|\Gamma(v-v_*,\varphi)|=|v-v_*|$ and $(v-v_*)\cdot\Gamma(v-v_*, \varphi)=0$.  Then, we find that 
\begin{align*}
|v'|^2&= |v|^2+ \Big(\frac{m_j}{m_i+m_j}\Big)^2(1-\cos\theta)^2|v-v_*|^2+\Big(\frac{m_j}{m_i+m_j}\Big)^2\sin^2\theta |v-v_*|^2\\
&\qquad-2\frac{m_j}{m_i+m_j}(1-\cos\theta)v\cdot(v-v_*)+2\frac{m_j}{m_i+m_j}\sin\theta\, v\cdot\Gamma(v-v_*,\varphi)\\
&= |v|^2\Big[1+2\Big(\frac{m_j}{m_i+m_j}\Big)^2(1-\cos\theta)-2\frac{m_j}{m_i+m_j}(1-\cos\theta)\Big]+|v_*|^2\Big[2\Big(\frac{m_j}{m_i+m_j}\Big)^2(1-\cos\theta)\Big]\\
&\qquad+v\cdot v_*\Big[-4\Big(\frac{m_j}{m_i+m_j}\Big)^2(1-\cos\theta)+2\frac{m_j}{m_i+m_j}(1-\cos\theta)\Big]+2\frac{m_j}{m_i+m_j}\sin\theta v\cdot \Gamma(v-v_*,\varphi)\\
&=\Big[\frac{m_i^2+m_j^2+2m_im_j\cos\theta}{(m_i+m_j)^2}\Big] |v|^2 +\Big[2\Big(\frac{m_j}{m_i+m_j}\Big)^2(1-\cos\theta)\Big]|v_*|^2\\
&\qquad+\Big[\frac{2m_im_j-2m_j^2}{(m_i+m_j)^2}(1-\cos\theta)\Big]v\cdot v_*+2\frac{m_j}{m_i+m_j}\sin\theta\, v\cdot \Gamma(v-v_*,\varphi).
\end{align*}
Multiplying the above equality by $\frac{m_i}{\sum_{l'=1}^Im_{l'}}$ we get that
\begin{align*}
\frac{m_i}{\sum m_{l'}}|v'|^2&=\Big[\frac{m_i^2+m_j^2+2m_im_j\cos\theta}{(m_i+m_j)^2}\Big]\frac{m_i}{\sum m_{l'}} |v|^2 +\frac{m_i}{\sum m_{l'}}\Big[2\Big(\frac{m_j}{m_i+m_j}\Big)^2(1-\cos\theta)\Big]|v_*|^2\\
&+\frac{m_i}{\sum m_{l'}}\Big[\frac{2m_im_j-2m_j^2}{(m_i+m_j)^2}(1-\cos\theta)\Big]v\cdot v_*+2\frac{m_i}{\sum m_{l'}}\frac{m_j}{m_i+m_j}\sin\theta v\cdot \Gamma(v-v_*,\varphi)\\
&=\Big[\frac{m_i^2+m_j^2+2m_im_j\cos\theta}{(m_i+m_j)^2}\Big]\frac{m_i}{\sum m_{l'}} |v|^2 +\Big[2\frac{m_im_j}{(m_i+m_j)^2}(1-\cos\theta)\Big]\frac{m_j}{\sum m_{l'}}|v_*|^2\\
&+\Big[\frac{2m_i^2m_j-2m_im_j^2}{\sum m_{l'}(m_i+m_j)^2}(1-\cos\theta)\Big]v\cdot v_*+2\frac{m_im_j}{\sum m_{l'}(m_i+m_j)}\sin\theta v\cdot \Gamma(v-v_*,\varphi),
\end{align*}
which yields,
\begin{align*}
\langle v'\rangle_i^2&= \Big[\frac{m_i^2+m_j^2+2m_im_j\cos\theta}{(m_i+m_j)^2}\Big]\langle v\rangle_i^2 +\Big[2\frac{m_im_j}{(m_i+m_j)^2}(1-\cos\theta)\Big]\langle v_*\rangle_j^2\\
&+\Big[\frac{2m_i^2m_j-2m_im_j^2}{\sum m_{l'}(m_i+m_j)^2}(1-\cos\theta)\Big]v\cdot v_*+2\frac{m_im_j}{\sum m_{l'}(m_i+m_j)}\sin\theta v\cdot \Gamma(v-v_*,\varphi)\\
&+\underset{=0}{\underbrace{1-\frac{m_i^2+m_j^2+2m_im_j\cos\theta}{(m_i+m_j)^2}-2\frac{m_im_j}{(m_i+m_j)^2}(1-\cos\theta)}}\\
&= \Big[\frac{m_i^2+m_j^2+2m_im_j\cos\theta}{(m_i+m_j)^2}\Big]\langle v\rangle_i^2 +\Big[2\frac{m_im_j}{(m_i+m_j)^2}(1-\cos\theta)\Big]\langle v_*\rangle_j^2\\
&+\Big[\frac{2m_i^2m_j-2m_im_j^2}{\sum m_{l'}(m_i+m_j)^2}(1-\cos\theta)\Big]v\cdot v_*+2\frac{m_im_j}{\sum m_{l'}(m_i+m_j)}\sin\theta v\cdot \Gamma(v-v_*,\varphi)\,.
\end{align*}
Fixing $n\geq 2$ and letting $B_n=\{(p,q,k,l)\in\N^4: p+q+k+l=n\}$, we obtain using Newton's expansion that
\begin{equation}\label{Vn1}
\begin{split}
\langle v'\rangle_i^{2n}&= \sum_{p,q,k,l}\frac{n!}{p!q!k!l!}\Big[\frac{m_i^2+m_j^2+2m_im_j\cos\theta}{(m_i+m_j)^2}\Big]^p\\
&\qquad\times\Big[2\frac{m_im_j}{(m_i+m_j)^2}(1-\cos\theta)\Big]^q\Big[\frac{2m_i^2m_j-2m_im_j^2}{\sum m_{l'}(m_i+m_j)^2}(1-\cos\theta)\Big]^k\\
&\qquad\quad\times\Big[2\frac{m_im_j}{\sum m_{l'}(m_i+m_j)}\sin\theta\Big]^l\langle v\rangle_i^{2p}\langle v_*\rangle_j^{2q}(v\cdot v_*)^k\big[v\cdot \Gamma(v-v_*,\varphi)\big]^l.
\end{split}
\end{equation}
Also, we have that
\begin{align*}
\langle v_*'\rangle_j^2&=\Big[\frac{m_i^2+m_j^2+2m_im_j\cos\theta}{(m_i+m_j)^2}\Big]\langle v_*\rangle_j^2 +\Big[2\frac{m_im_j}{(m_i+m_j)^2}(1-\cos\theta)\Big]\langle v\rangle_i^2\\
&\quad -\Big[\frac{2m_i^2m_j-2m_im_j^2}{\sum m_{l'}(m_i+m_j)^2}(1-\cos\theta)\Big]v\cdot v_*-2\frac{m_im_j}{\sum m_{l'}(m_i+m_j)}\sin\theta v\cdot \Gamma(v-v_*,\varphi),
\end{align*}
which implies that
\begin{equation}\label{Vn2}
\begin{split}
\langle v'_*\rangle_j^{2n}&= \sum_{p,q,k,l}\frac{n!}{p!q!k!l!}\Big[2\frac{m_im_j}{(m_i+m_j)^2}(1-\cos\theta)\Big]^p\\
&\qquad\times\Big[\frac{m_i^2+m_j^2+2m_im_j\cos\theta}{(m_i+m_j)^2}\Big]^q\Big[-\frac{2m_i^2m_j-2m_im_j^2}{\sum m_{l'}(m_i+m_j)^2}(1-\cos\theta)\Big]^k\\
&\qquad\quad\times\Big[-2\frac{m_im_j}{\sum m_{l'}(m_i+m_j)}\sin\theta\Big]^l\langle v\rangle_i^{2q}\langle v_*\rangle_j^{2p}(v\cdot v_*)^k\big[v\cdot \Gamma(v-v_*,\varphi)\big]^l\,.
\end{split}.
\end{equation}
Then, combining \eqref{Vn1}) and \eqref{Vn2} we get the following equality 
$$\frac{1}{2\pi}\int_0^\pi\int_0^{2\pi} \big(\langle v'\rangle_i^{2n}+\langle v'_*\rangle_j^{2n}-\langle v\rangle_i^{2n}-\langle v_*\rangle_j^{2n}\big)d\varphi \, \beta_{ij}(\theta) \,d\theta= -a_n+b_n$$
where
$$a_n:=\int_0^\pi\Big(1-\Big[\frac{m_i^2+m_j^2+2m_im_j\cos\theta}{(m_i+m_j)^2}\Big]^n-\Big[2\frac{m_im_j}{(m_i+m_j)^2}(1-\cos\theta)\Big]^n\Big)\Big(\langle v\rangle_i^{2n}+\langle v_*\rangle_j^{2n}\Big)\beta_{ij}(\theta) d\theta\,,$$
and
$$
b_n:=\int_0^\pi\int^{2\pi}_0 \Big(\Theta(v,v_*,\theta)+ \Theta(v_*,v,-\theta) \Big)d\varphi\, \beta_{ij}(\theta)\, d\theta,
$$
with
\begin{align*}
\int^{2\pi}_0 \Theta(v,v_*,\theta)d\varphi&= \frac{1}{2\pi}\sum_{(p,q,k,l)\in A_n}\frac{n!}{p!q!k!l!}\Big[\frac{m_i^2+m_j^2+2m_im_j\cos\theta}{(m_i+m_j)^2}\Big]^p\Big[2\frac{m_im_j}{(m_i+m_j)^2}(1-\cos\theta)\Big]^q\\
&\qquad\times\Big[\frac{2m_i^2m_j-2m_im_j^2}{\sum m_{l'}(m_i+m_j)^2}(1-\cos\theta)\Big]^k\Big[2\frac{m_im_j}{\sum m_{l'}(m_i+m_j)}\sin\theta\Big]^l\\
&\qquad\quad\times\langle v\rangle_i^{2p}\langle v_*\rangle_j^{2q}(v\cdot v_*)^k\int^{2\pi}_0\big[v\cdot \Gamma(v-v_*,\varphi)\big]^ld\varphi\,,
\end{align*} 
such that $A_n=\big\{(p,q,k,l)\in \mathbb{N}^4; p+q+k+l=n/\{(n,0,0,0),(0,n,0,0)\}\big\}$.\\

We proceed in two steps.\\

\noindent
{\bf Step 1.} We show that there exists a constant $\lambda^1_{ij}>0$ such that
 $$a_n\geq \lambda^1_{ij}\,n^{\frac{s_{ij}}{2}}\Big(\langle v\rangle_i^{2n}+\langle v_*\rangle_j^{2n}\Big).$$
We remark that the integral in $a_n$ is nonnegative because for $X=\frac{m_i^2+m_j^2+2m_im_j\cos\theta}{(m_i+m_j)^2}\in [0,1]$ we have that $$X^n+(1-X)^n\leq 1\,.$$
Then, using \eqref{B2} we get that
$$a_n\geq \kappa^1_{ij} \int_0^{n^{-\frac{1}{2}}}\bigg(1-\Big[\frac{m_i^2+m_j^2+2m_im_j\cos\theta}{(m_i+m_j)^2}\Big]^n-\Big[2\frac{m_im_j}{(m_i+m_j)^2}(1-\cos\theta)\Big]^n\Big)\theta^{-s_{ij}-1}d\theta\Big(\langle v\rangle_i^{2n}+\langle v_*\rangle_j^{2n}\bigg). $$
Furthermore, for all $\theta\in [0,n^{-\frac{1}{2}}]$ we have that
$$\cos\theta\leq 1-\frac{\theta^2}{2}\,,$$
which yields
$$\frac{m_i^2+m_j^2+2m_im_j\cos\theta}{(m_i+m_j)^2}\leq 1-\frac{m_im_j}{(m_i+m_j)^2}\theta^2\,,$$
whence 
$$\Big[\frac{m_i^2+m_j^2+2m_im_j\cos\theta}{(m_i+m_j)^2}\Big]^n\leq \Big[1-\frac{m_im_j}{(m_i+m_j)^2}\theta^2\Big]^n\leq e^{-n\frac{m_im_j}{(m_i+m_j)^2}\theta^2}.$$
Then
$$1-\Big[\frac{m_i^2+m_j^2+2m_im_j\cos\theta}{(m_i+m_j)^2}\Big]^n\geq n\frac{m_im_j}{(m_i+m_j)^2}\theta^2.$$
Next, still for $\theta\in[0, n^{-\frac{1}{2}}]$ we have that
$$1-\cos\theta \leq \frac{\theta^2}{2}\,,$$
which implies that
$$\Big[2\frac{m_im_j}{(m_i+m_j)^2}(1-\cos\theta)\Big]^n\leq \Big[\frac{m_im_j}{(m_i+m_j)^2}\theta^2\Big]^n,$$
whence, recalling that $n\geq 2$,
\begin{align*}
1-&\Big[\frac{m_i^2+m_j^2+2m_im_j\cos\theta}{(m_i+m_j)^2}\Big]^n-\Big[2\frac{m_im_j}{(m_i+m_j)^2}(1-\cos\theta)\Big]^n \\
&\quad\geq n\frac{m_im_j}{(m_i+m_j)^2}\theta^2-\Big[\frac{m_im_j}{(m_i+m_j)^2}\theta^2\Big]^n\geq  n\frac{m_im_j}{(m_i+m_j)^2}\theta^2-\Big(\frac{m_im_j}{(m_i+m_j)^2}\Big)^2\theta^2\\
&\quad\geq \frac{m_im_j}{(m_i+m_j)^2}\Big[\frac{n}{2}-\frac{n m_im_j}{2(m_i+m_j)^2}\Big]\theta^2\geq \frac{m_im_j(m_i^2+m_j^2+m_im_j)}{2(m_i+m_j)^4}n\theta^2\geq c\, n\, \theta^2.
\end{align*}
Consequently, 
\begin{align*}
a_n&\geq \kappa^1_{ij}\,c\, n\int^{n^{-\frac{1}{2}}}_0 \theta^{1-s_{ij}}d\theta\Big(\langle v\rangle_i^{2n}+\langle v_*\rangle_j^{2n}\Big) \geq \lambda^1_{ij}n^{\frac{s_{ij}}{2}}\Big(\langle v\rangle_i^{2n}+\langle v_*\rangle_j^{2n}\Big)
\end{align*}
with $\lambda^1_{ij}=\frac{\kappa^1_{ij}}{2-s_{ij}}\frac{m_im_j(m_i^2+m_j^2+m_im_j)}{(m_i+m_j)^4}$.\\

\noindent
{\bf Step 2.} We estimate $b_n$. We start by recalling that 
\begin{align*}
\int^{2\pi}_0 \Theta(v,v_*,\theta)d\varphi&= \frac{1}{2\pi}\sum_{(p,q,k,l)\in A_n}\frac{n!}{p!q!k!l!}\Big[\frac{m_i^2+m_j^2+2m_im_j\cos\theta}{(m_i+m_j)^2}\Big]^p\Big[2\frac{m_im_j}{(m_i+m_j)^2}(1-\cos\theta)\Big]^q\\
&\qquad\times\Big[\frac{2m_i^2m_j-2m_im_j^2}{\sum m_{l'}(m_i+m_j)^2}(1-\cos\theta)\Big]^k\Big[2\frac{m_im_j}{\sum m_{l'}(m_i+m_j)}\sin\theta\Big]^l\\
&\qquad\quad\times\langle v\rangle_i^{2p}\langle v_*\rangle_j^{2q}(v\cdot v_*)^k\int^{2\pi}_0\big[v\cdot \Gamma(v-v_*,\varphi)\big]^ld\varphi
\end{align*} 
such that $A_n=\big\{(p,q,k,l)\in \mathbb{N}^4; p+q+k+l=n/\{(n,0,0,0),(0,n,0,0)\}\big\}$. For the last term we have that 
$$\frac{1}{2\pi}\int^{2\pi}_0\big[v\cdot \Gamma(v-v_*,\varphi)\big]^ld\varphi= {\bf 1}_{\{l\in2\mathbb{N}\}}\frac{l!}{2^l[(\frac{l}{2})!]^2}\Big(|v|^2|v_*|^2-(v\cdot v_*)^2\Big)^\frac{l}{2}.$$
Let $C_n=\{(p,q,k,l)\in A_n; l\in 2\mathbb{N}\}$, we compute that
\begin{align*}
\int^{2\pi}_0 \Theta_1(v , & \,v_*,\theta)d\varphi= \!\!\!\!\sum_{(p,q,k,l)\in C_n}\frac{n!}{p!q!k![(\frac{l}{2})!]^2}\Big[\frac{m_i^2+m_j^2+2m_im_j\cos\theta}{(m_i+m_j)^2}\Big]^p\Big[2\frac{m_im_j}{(m_i+m_j)^2}(1-\cos\theta)\Big]^q\\
&\qquad\times\Big[\frac{2m_i^2m_j-2m_im_j^2}{\sum m_{l'}(m_i+m_j)^2}(1-\cos\theta)\Big]^k
\langle v\rangle_i^{2p}\langle v_*\rangle_j^{2q}(v\cdot v_*)^k\\
&\qquad\quad\times\Big[\frac{m_im_j}{\sum m_{l'}(m_i+m_j)}\sin\theta\Big]^l\big[|v|^2|v_*|^2-(v\cdot v_*)^2\big]^\frac{l}{2}\\
&\leq \sum_{(p,q,k,l)\in C_n}\frac{n!}{p!q!k![(\frac{l}{2})!]^2}\Big[\frac{m_i^2+m_j^2+2m_im_j\cos\theta}{(m_i+m_j)^2}\Big]^p\Big[2\frac{m_im_j}{(m_i+m_j)^2}(1-\cos\theta)\Big]^q\\
&\qquad\times\langle v\rangle_i^{2p}\langle v_*\rangle_j^{2q}
\Big[\frac{2m_i^\frac{3}{2}m_j^\frac{1}{2}-2m_i^\frac{1}{2}m_j^\frac{3}{2}}{(m_i+m_j)^2}(1-\cos\theta)\Big]^k
\Big[\Big(\frac{m_i^\frac{1}{2}}{(\sum m_{l'})^\frac{1}{2}}|v|\frac{m_j^\frac{1}{2}}{(\sum m_{l'})^\frac{1}{2}}|v_*|\Big)^2\Big]^\frac{k}{2}\\
&\qquad\quad\times\Big[\frac{m^{\frac{1}{2}}_im^\frac{1}{2}_j}{(m_i+m_j)}\sin\theta\Big]^l\Big[\frac{m_i}{\sum m_{l'}}|v|^2\frac{m_j}{\sum m_{l'}}|v_*|^2-\frac{m_im_j}{\sum m_{l'}}(v\cdot v_*)^2\Big]^\frac{l}{2}\\
&\leq \sum_{(p,q,k,l)\in C_n}\frac{n!}{p!q!k![(\frac{l}{2})!]^2}\Big[\frac{m_i^2+m_j^2+2m_im_j\cos\theta}{(m_i+m_j)^2}\Big]^p\Big[2\frac{m_im_j}{(m_i+m_j)^2}(1-\cos\theta)\Big]^q\\
&\qquad\times \Big[\frac{m_i^\frac{3}{2}m_j^\frac{1}{2}-m_i^\frac{1}{2}m_j^\frac{3}{2}}{(m_i+m_j)^2}(1-\cos\theta)\Big]^k\Big[\frac{m^{\frac{1}{2}}_im^\frac{1}{2}_j}{(m_i+m_j)}\sin\theta\Big]^l \langle v\rangle_i^{2p+l+k}\langle v_*\rangle_j^{2q+l+k}.
\end{align*}
Then,
\begin{align*}
\int_0^\pi &\int_0^{2\pi}\Big(\Theta_1(v,v_*,\theta)+\Theta_2(v_*,v,-\theta)\Big)d\varphi\, \beta_{ij}(\theta)\,d\theta\\
&\leq  \sum_{(p,q,k,l)\in C_n}\frac{n!}{p!q!k![(\frac{l}{2})!]^2}\int_0^\pi\Big[\frac{m_i^2+m_j^2+2m_im_j\cos\theta}{(m_i+m_j)^2}\Big]^p\Big[2\frac{m_im_j}{(m_i+m_j)^2}(1-\cos\theta)\Big]^q\\
&\qquad\times \Big[\frac{m_i^\frac{3}{2}m_j^\frac{1}{2}-m_i^\frac{1}{2}m_j^\frac{3}{2}}{(m_i+m_j)^2}(1-\cos\theta)\Big]^k
\Big[\frac{m^{\frac{1}{2}}_im^\frac{1}{2}_j}{(m_i+m_j)}\sin\theta\Big]^l\,\beta_{ij}(\theta)\,d\theta\\
&\qquad\quad\times\Big(\langle v\rangle_i^{2p+l+k}\langle v_*\rangle_j^{2q+l+k}+\langle v\rangle_i^{2q+l+k}\langle v_*\rangle_j^{2p+l+k}\Big)
\\
&\leq  \sum_{(p,q,k,l)\in C_n}\frac{n!}{p!q!k![(\frac{l}{2})!]^2} I_{p,q,k,l}\times\Big(\langle v\rangle_i^{2p+l+k}\langle v_*\rangle_j^{2q+l+k}+\langle v\rangle_i^{2q+l+k}\langle v_*\rangle_j^{2p+l+k}\Big)
\end{align*}
where 
\begin{align*}
I_{p,q,k,l}&=\int_0^\pi\Big[\frac{m_i^2+m_j^2+2m_im_j\cos\theta}{(m_i+m_j)^2}\Big]^p\Big[2\frac{m_im_j}{(m_i+m_j)^2}(1-\cos\theta)\Big]^q \\
&\qquad\times\Big[\frac{m_i^\frac{3}{2}m_j^\frac{1}{2}-m_i^\frac{1}{2}m_j^\frac{3}{2}}{(m_i+m_j)^2}(1-\cos\theta)\Big]^k\times\Big[\frac{m^{\frac{1}{2}}_im^\frac{1}{2}_j}{(m_i+m_j)}\sin\theta\Big]^l\,\beta_{ij}(\theta)\, d\theta \,.
\end{align*}
This can be rewritten as
$$\int_0^\pi \int_0^{2\pi}\Big(\Theta_1(v,v_*,\theta)+\Theta_2(v_*,v,-\theta)\Big)d\varphi\, b_{ij}(\cos\theta)\,d\theta \leq \sum_{a=0}^n K_{n,a} \Big(\langle v\rangle_i^{2a}\langle v_*\rangle_j^{2(n-a)}+\langle v\rangle_i^{2(n-a)}\langle v_*\rangle_j^{2a}\Big), $$
where, setting $A_{n,a}=\big\{ (p,q,k,l)\in C_n; p+\frac{k}{2}+\frac{l}{2}=a\;({\rm whence\;} q+\frac{k}{2}+\frac{l}{2}=n-a )\big\}$, one has that
$$K_{n,a}= \sum_{(p,q,k,l)\in A_{n,a}}\frac{n!\;I_{p,q,k,l}}{p!q!k![(\frac{l}{2})!]^2}\,.$$
Fix $a\in [1,n-1]$, then $(p,q,k,l)\in A_{n,a}$ if and only if there exist $r,s\in \mathbb{N}$ such that $k=2r$, $l=2s$, $p=a-r-s$, and $q=n-a-r-s$.  Consequently, 
$$K_{n,a}=\sum_{r,s}\frac{n!\;I_{a-r-s,n-a-r-s,2r,2s}}{(a-r-s)!(n-a-r-s)!(2r)!s!}\,.$$
Using the fact that
$$\Big[\frac{m_i^\frac{3}{2}m_j^\frac{1}{2}-m_i^\frac{1}{2}m_j^\frac{3}{2}}{(m_i+m_j)^2}(1-\cos\theta)\Big]^{2}\leq \Big[\frac{m_i^2+m_j^2+2m_im_j\cos\theta}{(m_i+m_j)^2}\Big]\Big[2\frac{m_im_j}{(m_i+m_j)^2}(1-\cos\theta)\Big]$$
and
$$\Big[\frac{m^{\frac{1}{2}}_im^\frac{1}{2}_j}{(m_i+m_j)}\sin\theta\Big]^{2}\leq \Big[\frac{m_i^2+m_j^2+2m_im_j\cos\theta}{(m_i+m_j)^2}\Big]\Big[2\frac{m_im_j}{(m_i+m_j)^2}(1-\cos\theta)\Big]$$
we deduce that
 \begin{align*}
I_{a-r-s,n-a-r-s,2r,2s}&=\int_0^\pi\Big[\frac{m_i^2+m_j^2+2m_im_j\cos\theta}{(m_i+m_j)^2}\Big]^{a-r-s}\Big[2\frac{m_im_j}{(m_i+m_j)^2}(1-\cos\theta)\Big]^{n-a-r-s} \\
&\qquad\times\Big[\frac{m_i^\frac{3}{2}m_j^\frac{1}{2}-m_i^\frac{1}{2}m_j^\frac{3}{2}}{(m_i+m_j)^2}(1-\cos\theta)\Big]^{2r}\times\Big[\frac{m^{\frac{1}{2}}_im^\frac{1}{2}_j}{(m_i+m_j)}\sin\theta\Big]^{2s}\beta_{ij}(\theta) \,d\theta \\
&\hspace{-1cm}\leq \int^\pi_0\Big[\frac{m_i^2+m_j^2+2m_im_j\cos\theta}{(m_i+m_j)^2}\Big]^{a}\Big[2\frac{m_im_j}{(m_i+m_j)^2}(1-\cos\theta)\Big]^{n-a}\,\beta_{ij}(\theta)\,d\theta = :J_{n,a}\,.
\end{align*}
Thus,
$$K_{n,a}=J_{n,a}\sum_{r,s}\frac{n!}{(a-r-s)!(n-a-r-s)!(2r)!s!}\leq C_n\,J_{n,a}\,.$$
From the definition of $J_{n,a}$ one has that
$$J_{n,a}\leq \kappa^1_{ij} K_{s_{ij},n,a}+\kappa^2_{ij} L_{s_{ij},n,a},$$
where
$$ K_{s_{ij},n,a}=\int^{\pi/2}_0 \Big[\frac{m_i^2+m_j^2+2m_im_j\cos\theta}{(m_i+m_j)^2}\Big]^{a}\Big[2\frac{m_im_j}{(m_i+m_j)^2}(1-\cos\theta)\Big]^{n-a} \theta^{-s_{ij}-1}d\theta\,,$$
and $$L_{s_{ij},n,a}= \Big(\frac{2}{\pi}\Big)^{s_{ij}+1}\int^\pi_{\pi/2}\Big[\frac{m_i^2+m_j^2+2m_im_j\cos\theta}{(m_i+m_j)^2}\Big]^{a}\Big[2\frac{m_im_j}{(m_i+m_j)^2}(1-\cos\theta)\Big]^{n-a}d\theta.$$
Let us show that $L_{s_{ij},n,a}$ is estimated in terms of $K_{0,n,n-a}$.  Indeed, using the substitution $\theta\rightarrow \pi-\theta$ we see that
$$\frac{m_i^2+m_j^2-2m_im_j\cos\theta}{(m_i+m_j)^2}=2\frac{m_im_j}{(m_i+m_j)^2}(1-\cos\theta)\,,$$
and 
$$2\frac{m_im_j}{(m_i+m_j)^2}(1+\cos\theta)=\frac{m_i^2+m_j^2+2m_im_j\cos\theta}{(m_i+m_j)^2}\,.$$
Consequently, we deduce that
\begin{align*}
L_{s_{ij},n,a}&= \Big(\frac{2}{\pi}\Big)^{s_{ij}+1}\int^{\pi/2}_0\Big[\frac{m_i^2+m_j^2+2m_im_j\cos\theta}{(m_i+m_j)^2}\Big]^{n-a}\Big[2\frac{m_im_j}{(m_i+m_j)^2}(1-\cos\theta)\Big]^{a}d\theta\\
&\leq \Big(\frac{2}{\pi}\Big)^{s_{ij}}\int^{\pi/2}_0\Big[\frac{m_i^2+m_j^2+2m_im_j\cos\theta}{(m_i+m_j)^2}\Big]^{n-a}\Big[2\frac{m_im_j}{(m_i+m_j)^2}(1-\cos\theta)\Big]^{a}\theta^{-1}d\theta \\
&\leq\Big(\frac{2}{\pi}\Big)^{s_{ij}} K_{0,n,n-a}\,.
\end{align*} 
Now, we estimate $K_{s_{ij},n,a}$.   For $\theta\in(0,\pi/2]$ we have that 
$$\theta \leq 2\sin\theta\qquad\text{and}\qquad\theta^{-1}\leq \Big[2\frac{m_im_j}{(m_i+m_j)^2}(1-\cos\theta)\Big]^{-1/2}\,,$$
so that 
$$\theta^{-s_{ij}-1}\leq 2\theta^{-s_{ij}-2}\sin\theta \leq 2 \Big[2\frac{m_im_j}{(m_i+m_j)^2}(1-\cos\theta)\Big]^{-s_{ij}/2-1}\,,$$
and thus
\begin{align*}
K_{s_{ij},n,a}&\leq 2 \int^{\pi/2}_0 \Big[\frac{m_i^2+m_j^2+2m_im_j\cos\theta}{(m_i+m_j)^2}\Big]^{a}\Big[2\frac{m_im_j}{(m_i+m_j)^2}(1-\cos\theta)\Big]^{n-a-s_{ij}/2-1}\sin\theta \,d\theta\\
&= \frac{2(m_i+m_j)^2}{m_im_j}\int^1_{1/2} x^a(1-x)^{n-a-s_{ij}/2-1}dx\,, 
\end{align*}
where we used the change of variables $x=\frac{m_i^2+m_j^2+2m_im_j\cos\theta}{(m_i+m_j)^2}$ in the latter.  Hence
$$K_{s_{ij}, n, a}\leq \frac{2(m_i+m_j)^2}{m_im_j}\int^1_{1/2} x^a(1-x)^{n-a-s_{ij}/2-1}dx= \frac{(m_i+m_j)^2}{2m_im_j}\frac{\Gamma(a+1)\Gamma(n-a-s_{ij}/2)}{\Gamma(n+1-s_{ij}/2)}$$
where $\Gamma(\cdot)$ is Euler's Gamma function.  Consequently,
$$\Big(\begin{matrix}
n\\a
\end{matrix}
\Big) K_{s_{ij},n,a}\leq \frac{2(m_i+m_j)^2}{m_im_j} \frac{u_{s_{ij},n}}{(n-a-s_{ij}/2)u_{s_{ij},n-a}}$$
with $u_{s_{ij},k}=\frac{\Gamma(k+1)}{\Gamma(k+1-s_{ij}/2)}$.  By  Sirling's  Formula $\Gamma(x+1)\sim \sqrt{2\pi x}(\frac{x}{e})^x$ as $x\rightarrow \infty$; one can verify that $u_{s_{ij},k}\sim k^{s_{ij}/2}$ as $k\rightarrow \infty$ so that there exists $A_{s_{ij}}\in(0,\infty)$ such that
$$\Big(\begin{matrix}
n\\a
\end{matrix}\Big) K_{s_{ij},n,a}\leq a\,A_{s_{ij}}^2\frac{(m_i+m_j)^2}{m_im_j} \frac{n^{s_{ij}/2}}{(1-s_{ij}/2)(n-a)^{s_{ij}/2+1}}\,,$$
and 
$$\Big(\begin{matrix}
n\\a
\end{matrix}\Big) L_{s_{ij},n,a}\leq (2/\pi)^{s_{ij}}\Big(\begin{matrix}
n\\a
\end{matrix}\Big) K_{0,n,n-a}\leq (2/\pi)^{s_{ij}} a^{-1}A_0\frac{(m_i+m_j)^2}{m_im_j}.$$
Then 
$$K_{n,a}\leq \zeta \frac{(m_i+m_j)^2}{m_im_j}\Big[\frac{n^{s_{ij}/2}}{(n-a)^{s_{ij}/2+1}}+\frac{1}{a}\Big] = \lambda^2_{ij}\Big[\frac{n^{s_{ij}/2}}{(n-a)^{s_{ij}/2+1}}+\frac{1}{a}\Big], $$
where $\lambda^2_{ij}:=\zeta \frac{(m_i+m_j)^2}{m_im_j}$ and
$$\zeta= \sum_{r,s}\frac{(n-a)! a!}{(a-r-s)!(n-a-r-s)!(2r)!s!}\Big(\frac{2A^2_{s_{ij}}}{1-\frac{s_{ij}}{2}}+\big(\frac{2}{\pi}\big)^{s_{ij}}A_0\Big)\frac{(m_i+m_j)^2}{m_im_j}\,.$$
This concludes the proof.
\end{proof}
\subsection{Generation and Propagation of polynomial moments}
The purpose of Proposition \ref{Momlem} is to derive from the Boltzmann system \eqref{B-E1} an ordinary differential inequality for polynomial moment of order $2n$, with $n\geq2$, based on the Povzner estimate from Lemma \ref{Povlem} to implement statistical moment analysis.  In particular, the key idea in regard of \textit{exponential tails} generation and propagation is to realise that the mixture system's control inequality is obtained by individualising the $ij$-scalar components satisfying the Povzner lemma, Lemma \ref{Povlem}, specifically described in Proposition \ref{Momlem} item ${\rm (i)}$. %
\begin{proof}[Proof of Proposition \ref{Momlem}]
Let $n\geq 2$ and $1\leq i,j\leq I$, and recall that
\begin{align}
\begin{split}\label{componetij}
\partial_t{\bf m}^{ij}_{2n}=\frac{1}{2}\int_{\mathbb{R}^3}\int_{\mathbb{R}^3}\int_0^\pi\int_0^{2\pi}\Big(\langle v'\rangle_i^{2n}+ \langle v_*'\rangle_j^{2n} - \langle v&\rangle_i^{2n} - \langle v_*\rangle_j^{2n}\Big)d\varphi\, \beta_{ij}(\theta)\,d\theta\\
&\times\,|v-v_*|^{\lambda_{ij}}\,f_i(v)\,f_j(v_*)\,dv_* \,dv\,.
\end{split}
\end{align}
Consequently, a direct consequence of the weak formulation of the Boltzmann mixture equation \eqref{Q4} with $\psi_i=\langle v\rangle_i^{2n}$ is that
\begin{equation}\label{Diffmom}
{\bf m}'_{2n}(t)=\sum^{I}_{i=1}\sum^{I}_{j=1} \partial_t{\bf m}^{ij}_{2n}(t),\qquad t>0\,.
\end{equation} 
Using Povzner Lemma \ref{Povlem}, we get that $\partial_t{\bf m}^{ij}_{2n}(t)\leq -{\bf A}_{n}^{ij}(t)+{\bf B}_{n}^{ij}(t)$ where
$${\bf A}_{n}^{ij}(t)=\frac{1}{2}\lambda_{ij}^1n^{\frac{s_{ij}}{2}}\int_{\mathbb{R}^3}\int_{\mathbb{R}^3}\Big(\langle v\rangle_i^{2n}+\langle v_*\rangle_j^{2n}\Big)|v-v_*|^{\lambda_{ij}}f_i(v)f_j(v_*)dv_*dv,$$
and
\begin{align*}{\bf B}_{n}^{ij}(t)=\frac{1}{2}\lambda^2_{ij}\sum^{n-1}_{a=1}\Big(\begin{matrix}n\\a\end{matrix}\Big)\Big[\frac{n^{s_{ij}/2}}{(n-a)^{s_{ij}/2+1}} + \frac{1}{a}\Big]&\int_{\R^3}\int_{\R^3}\Big(\langle v\rangle_i^{2a}\langle v_*\rangle_j^{2(n-a)}+\langle v\rangle_i^{2(n-a)}\langle v_*\rangle_j^{2a}\Big)\\
&\times |v-v_*|^{\lambda_{ij}}\,f_i(v)\,f_j(v_*)\,dv_*\,dv\,.
\end{align*}
Let us begin proving the first result by proceeding in two steps.

\medskip
\noindent
{\bf Step 1.} Finding a lower bound for $A_n$.  Since $\F \in U(D_0, E_0)$, one can use a slight modification of the proof of \cite[Lemma 9]{AG} to obtain an explicit constant $c:=c(D_0,E_0)>0$ such that
\begin{equation*}
\int_{\mathbb{R}^3}|v-z|^{\lambda}f_i(z)dz\geq c\,\langle v \rangle^{\lambda}\,,\qquad\quad \lambda\in[0,2]\,,\quad 1\leq i \leq I\,.
\end{equation*} 
Consequently,
\begin{align*}
{\bf A}_{n}^{ij}(t)&= 2\lambda^1_{ij}n^{\frac{s_{ij}}{2}}\int_{\mathbb{R}^3}\int_{\mathbb{R}^3}\big(\langle v\rangle_i^{2n}+\langle v_*\rangle_j^{2n}\big)|v-v_*|^{\lambda_{ij}}f_i(v)f_j(v_*)dv_*dv\\
&\geq 2\lambda^1_{ij}n^{\frac{s_{ij}}{2}}\,c\,\Big( {\bf m}_{2n+\lambda_{ij},i}(t) + {\bf m}_{2n+\lambda_{ij},j}(t) \Big)\,.
\end{align*}
{\bf Step 2.} Finding a convenient upper bound for ${\bf B}_n^{ij}$.  Using that, see for example \cite{GP}, 
$$|v-v_*|^{\lambda_{ij}}\leq (m_im_j)^{\frac{\lambda_{ij}}{2}}\big(\langle v\rangle_i^{\lambda_{ij}}+\langle v_*\rangle_j^{\lambda_{ij}}\big),$$
we get that 
\begin{align*}
{\bf B}_{n}^{ij}(t)&\leq 2  (m_im_j)^{\frac{\lambda_{ij}}{2}}\lambda^2_{ij}\sum^{n-1}_{a=1}\Big(\begin{matrix}n\\a\end{matrix}\Big)\Big[\frac{n^{s_{ij}/2}}{(n-a)^{{s}_{ij}/2+1}}+\frac{1}{a}\Big]\Big({\bf m}_{2a+\lambda_{ij},i}(t){\bf m}_{2(n-a),j}(t) \\
&\qquad+ {\bf m}_{2a,j}(t){\bf m}_{2(n-a) +\lambda_{ij},i}(t) +{\bf m}_{2a+\lambda_{ij},j}(t){\bf m}_{2(n-a),i}(t)+{\bf m}_{2a,i}(t){\bf m}_{2(n-a)+\lambda_{ij},j}\Big)\\
 &= 2 (m_im_j)^{\frac{\lambda_{ij}}{2}}\lambda^2_{ij}\sum^{n-1}_{a=1}\Big(\begin{matrix}n\\a\end{matrix}\Big)\Big[\frac{n^{s_{ij}/2}}{a^{s_{ij}/2+1}}+\frac{1}{a}\Big]\Big({\bf m}_{2a+\lambda_{ij},i}(t){\bf m}_{2(n-a),j}(t)\\
&\qquad+{\bf m}_{2a,j}(t){\bf m}_{2(n-a)+\lambda_{ij},i}(t) +{\bf m}_{2a+\lambda_{ij},j}(t){\bf m}_{2(n-a),i}(t)+{\bf m}_{2a,i}(t){\bf m}_{2(n-a)+\lambda_{ij},j}\Big)\\
&\leq 4 (m_im_j)^{\frac{\lambda_{ij}}{2}}\lambda^2_{ij}\sum^{n-1}_{a=1}\Big(\begin{matrix}n\\a\end{matrix}\Big)\frac{n^{s_{ij}/2}}{a^{s_{ij}/2+1}}\Big({\bf m}_{2a+\lambda_{ij},i}(t){\bf m}_{2(n-a),j}(t)\\
&\qquad+{\bf m}_{2a,j}(t){\bf m}_{2(n-a)+\lambda_{ij},i}(t) + {\bf m}_{2a+\lambda_{ij},j}(t){\bf m}_{2(n-a),i}(t)+{\bf m}_{2a,i}(t){\bf m}_{2(n-a)+\lambda_{ij},j}\Big)\,,
 \end{align*}
where we performed the change of index $a\rightarrow n-a$ in the intermediate step, and the fact that $\frac{1}{a}\leq \frac{n^{s_{ij}/2}}{a^{s_{ij}/2+1}}$ for $a\leq n$.  Furthermore, the terms in parenthesis are symmetric with respect to $a$, that is
\begin{align*}
{\bf m}_{2a+\lambda_{ij},i}(t){\bf m}_{2(n-a),j}(t) &\quad \text{for}\quad a=1,2,3,\cdots \quad\text{equals}\\
{\bf m}_{2a,j}(t){\bf m}_{2(n-a)+\lambda_{ij},i}(t) &\quad \text{for}\quad a=n-1,n-2,n-3,\cdots\,.
\end{align*}
And similarly for the second group of moments.  Note, in addition, that the first half of weights controls the second half because $a\rightarrow \frac{n}{a}$ is decreasing.  Hence,
\begin{align*}
{\bf B}_{n}^{ij}(t)&\leq 8 (m_im_j)^{\frac{\lambda_{ij}}{2}}\lambda^2_{ij}\sum^{\lfloor n/2 \rfloor}_{a=1}\Big(\begin{matrix}n\\a\end{matrix}\Big)\frac{n^{s_{ij}/2}}{a^{s_{ij}/2+1}}\Big({\bf m}_{2a+\lambda_{ij},i}(t){\bf m}_{2(n-a),j}(t)\\
&\qquad+{\bf m}_{2a,j}(t){\bf m}_{2(n-a)+\lambda_{ij},i}(t) + {\bf m}_{2a+\lambda_{ij},j}(t){\bf m}_{2(n-a),i}(t)+{\bf m}_{2a,i}(t){\bf m}_{2(n-a)+\lambda_{ij},j}\Big)\,.
\end{align*}
Now, since $n-a\geq a$ for $1\leq a \leq\lfloor n/2 \rfloor$ we can invoke the Lemma \ref{App-interp} with $b=2(n-a)$ and $\beta=\lambda_{ij}$, which is a generalisation of the classical lemma \cite[Lemma 7]{NF}, to conclude that
\begin{equation*}
{\bf m}_{2a+\lambda_{ij},i}(t){\bf m}_{2(n-a),j}(t) \leq \theta_{ij}{\bf m}_{2(n-a)+\lambda_{ij},i}(t){\bf m}_{2a,j}(t)+ (1-\theta_{ij}){\bf m}_{2(n-a)+\lambda_{ij},j}(t){\bf m}_{2a,i}(t)\,,
\end{equation*}
resulting in the estimation
\begin{align*}
{\bf B}_{n}^{ij}(t)&\leq 16 (m_im_j)^{\frac{\lambda_{ij}}{2}}\lambda^2_{ij}\sum^{\lfloor n/2 \rfloor}_{a=1}\Big(\begin{matrix}n\\a\end{matrix}\Big)\frac{n^{s_{ij}/2}}{a^{s_{ij}/2+1}}\Big({\bf m}_{2a,j}(t){\bf m}_{2(n-a)+\lambda_{ij},i}(t) +{\bf m}_{2a,i}(t){\bf m}_{2(n-a)+\lambda_{ij},j}\Big)\,.
\end{align*}
Consequently,
\begin{equation}\label{diffmomt}
\partial_t{\bf m}^{ij}_{2n}(t)\leq -C^1_{ij} n^{\frac{s_{ij}}{2}} \big({\bf m}_{2n+\lambda_{ij},i}(t)+{\bf m}_{2n+\lambda_{ij},j}(t)\big)+C_{ij}^2 \,S^{ij}_{n}(t)\,. 
\end{equation}
where the constants can be taken as $$C^1_{ij}=2 \, \lambda^1_{ij}\, c \qquad \text{and} \qquad C^2_{ij}=16(m_{i}m_{j})^{\frac{\lambda_{ij}}{2}}\lambda^2_{ij}\,.$$
This is precisely statement (i).

\medskip
\noindent
Now, statement (ii) follow from (i) after some observations.  First, for the coercive term it readily follows that
\begin{align*}
\sum^{I}_{i=1}\sum^{I}_{j=1}C^1_{ij}n^{s_{ij}/2}\big({\bf m}_{2n+\lambda_{ij},i}(t)&+{\bf m}_{2n+\lambda_{ij},j}(t)\big)\geq 2\min_{ij}\{C^1_{ij}n^{s_{ij}/2}\}\sum^{I}_{i=1}\sum^{I}_{j=1}{\bf m}_{2n+\lambda_{ij},i}(t)\\
&\geq 2\,C_1\,n^{\frac{ \bar{s}}{2}}\sum^{I}_{i=1}{\bf m}_{2n+\overline{\bar{\lambda}}_{i},i}(t) \geq 2\,C_1\,n^{\frac{ \bar{s}}{2}}\,{\bf m}_{2n+\lambda^{\natural}}(t)\,.
\end{align*}
The upper estimate related to  $S^{ij}_n$ follows after some moment interpolations.  Note that (since $n/a\geq1$)
\begin{align*}
\sum^{I}_{i=1}\sum^{I}_{j=1}C^{2}_{ij}&\,S^{ij}_{n}(t)\leq 2  \max_{1\leq i,j\leq I}C^{2}_{ij} \sum^{\lfloor n/2\rfloor}_{a=1}\Big(\begin{matrix}n\\a\end{matrix}\Big)\frac{n^{\overline{\bar{s}}/2}}{a^{\overline{\bar{s}}/2+1}}{\bf m}_{2a}(t){\bf m}_{2(n-a)+\overline{\overline{\lambda}}}(t)\,,
\end{align*}
Furthermore, $2 < 2(n-a)+\overline{\overline{\lambda}} < 2n$ (recalling that $n\geq2$), therefore, the following interpolations hold
\begin{align*}
{\bf m}_{2(n-a)+\overline{\overline{\lambda}}}(t)&\leq {\bf m}_{2}^{1-\theta_1}\,{\bf m}^{\theta_1}_{2n + \lambda^\natural}(t)\,,\qquad \theta_1=\frac{2(n-a)+\overline{\overline{\lambda}}-2}{2n+\lambda^\natural-2}\,,\\
{\bf m}_{2a}(t)&\leq {\bf m}_{2}^{1-\theta_2}\,{\bf m}^{\theta_2}_{2n+\lambda^\natural}(t)\,,\qquad \theta_2=\frac{2a-2}{2n+\lambda^\natural-2}\,,
\end{align*}
which yields that (since $\overline{\overline{\lambda}}- \lambda^{\natural}<2$)
 $${\bf m}_{2a+\overline{\overline{\lambda}}}(t)\,{\bf m}_{2(n-a)}(t)\leq {\bf m}^{2-\theta_1-\theta_2}_{2}{\bf m}^{\theta_1+\theta_2}_{2n+\lambda^\natural}\,,\qquad \theta_1+\theta_2 = \frac{2n+\overline{\overline{\lambda}}-4}{2n+\lambda^\natural-2}<1\,.$$
Consequently,
\begin{align*}
\sum^{I}_{i=1}\sum^{I}_{j=1}C^{2}_{ij}\,S^{ij}_{n}(t)&\leq C({\bf m}_2)\,2^{\frac n2}\, n^{\frac{\overline{\overline{s}}}{2}}\,{\bf m}^{\theta}_{2n+\lambda^\natural}(t)\,,\qquad \theta=\frac{2n+\overline{\overline{\lambda}}-4}{2n+\lambda^\natural-2}<1\,,\\\
 &\leq \Big(C\,2^{\frac n2} \, n^{\frac{\overline{\overline{s}}}{2}-\frac{\theta\bar{s}}{2}}\Big)^{\frac{1}{1-\theta}} + C_1\;n^{\frac{\bar{s}}{2}}\;  {\bf m}_{2n+{\lambda}^\natural}(t)\,,
\end{align*}
where in the latter we used Young's inequality.  Consequently,
$$\sum^{I}_{i=1}\sum^{I}_{j=1}\partial_t{\bf m}^{ij}_{2n}(t)\leq - C_1\;n^{\frac{ \bar{s}}{2}}\;  {\bf m}_{2n+{\lambda}^\natural}(t)+\Big(C\,2^{\frac n2}\, n^{\frac{\overline{\overline{s}}}{2}-\frac{\theta \bar{s}}{2}}\Big)^{\frac{1}{1-\theta}}\,,$$
which proves item (ii) of the proposition.
\end{proof}
\begin{proof}[Proof of Theorem \ref{Mom}]
Use the interpolation
\begin{equation*}
{\bf m}_{2n}(t)\leq {\bf m}^{1-\theta}_{2}\,{\bf m}^{\theta}_{2n + \lambda^\natural}(t)\,,\qquad \theta=\frac{2n-2}{2n+\lambda^\natural-2}<1\,,
\end{equation*}
in Proposition \ref{Momlem} item ${\rm (ii)}$ to conclude that
\begin{align*}
{\bf m}'_{2n}(t)& \leq  - C_1\;n^{\frac{ \bar{s}}{2}}\; {\bf m}^{\frac{1-\theta}{\theta}}_{2}\;{\bf m}^{1+\frac{\lambda^\natural}{2n-2}}_{2n}(t)+\Big(C\,2^{\frac n2}\,n^{\frac{\overline{\overline{s}}}{2}-\frac{\theta
\bar{s}}{2}}\Big)^{\frac{1}{1-\theta}}\\
&=: - A_{n}\,{\bf m}^{1+\frac{\lambda^\natural}{2n-2}}_{2n}(t)+B_{n}\,,\qquad c_{n}=\frac{\lambda^\natural}{2n-2}\,.
\end{align*}
Thus, a comparison principle for order, see \cite[Lemma 18]{AG}, gives that the super solution $ {\bf m}^*_{2n}(t)$ controls the $2n^{th}$-moment
$${\bf m}_{2n}(t)\leq {\bf m}^*_{2n}(t) := \Big(\frac{B_n}{A_n}\Big)^\frac{1}{1+c_n}+\Big(\frac{1}{c_n\,A_{n}}\Big)^{\frac{1}{c_n}}\,t^{-\frac{2n-2}{\lambda^\natural}}\,.$$
Similarly, if $m_{2n}(0)<\infty$ one deduces the propagation bound
$${\bf m}_{2n}(t)\leq \max\Big\{ m_{2n}(0) , \Big(\frac{B_n}{A_n}\Big)^\frac{1}{1+c_n}\Big\}\,.$$
For a general $r>2$, consider the minimal integer $n\geq2$ such that $2< r \leq 2n$, then
\begin{equation*}
{\bf m}_{r}(t)\leq {\bf m}^{1-\tilde\theta}_{2}\,{\bf m}^{\tilde\theta}_{2n}(t)\,,\qquad \tilde\theta=\frac{r-2}{2n-2}\leq1\,.
\end{equation*}
Thus, we conclude that
\begin{equation*}
{\bf m}_r(t) \leq {\bf m}^{1-\tilde\theta}_{2}\bigg( \Big(\frac{B_n}{A_n}\Big)^\frac{\tilde\theta}{1+c_n}+\Big(\frac{1}{c_n\,A_{n}}\Big)^{\frac{\tilde\theta}{c_n}}\,t^{-\frac{r-2}{\lambda^\natural}}\bigg)\,.
\end{equation*}
This concludes the proof of the result.
\end{proof}
\subsection{Exponential moments}
Let us now focus in the generation and propagation of exponential moments result of Theorem \ref{theo1}. Our goal is to show that the partial sums
$$
\sum_{n=0}^p \frac{(\sigma t)^{\rho n}{\bf m}_{2n}[\F](t)}{ (n!)^{\alpha}}
$$
are bounded uniformly in time $t$ and summation truncation $p$ for some explicit $\rho>0$ and $\alpha>0$.  To this end we use a methodology reminiscent of \cite{NF, ACG}. The proof relies on generation and propagation of polynomial moments stated in Theorem \ref{Mom} and the following lemma.

\begin{lem}[Equivalence property]\label{lemexp1}
Let $\G=[g_i]_{1\leq i\leq I}\in L^1(\R^3)$, then we have the following equivalence:
\begin{itemize}
\item[$(i)$] (System) For $\sigma_0\in (0,\infty),\; \alpha\in [1,\infty)$ and $K\in[1,\infty)$, we have 
$$
\sup_{n\geq 0} \frac{\sigma_0^n\, {\bf m}_{2n}[\G]}{(n!)^\alpha}\leq K\; \Longrightarrow \;\sum_{i=1}^I\int_{\R^3}\exp\Big(\frac{\sigma_0^{1/\alpha}\langle v\rangle_i^{2/\alpha}}{2}\Big)g_i(v)dv\leq 2 \, {\bf m}_0^{1-\frac1\alpha}K^{\frac1\alpha}.
$$
\item[$(ii)$] (Component) Let $\rho\in(0,2]$, $\sigma_{0}\in (0,1]$ and $K\geq1$, there exists $\sigma_{1}:=\sigma_{1}(\rho, \sigma_{0}, K)$ such that 
$$
\int_{\R^3}\exp\big( \sigma_0\langle v \rangle_i^\rho \big)\,g_i(v)\,dv\leq \frac{K}{I}\;\Longrightarrow \; \sup_{n\geq 0}\frac{\sigma_1^n\,{\bf m}_{2n,i}[g_i]}{(n!)^{2/\rho}}\leq \frac{1}{I}\,.
$$
\end{itemize}
\end{lem}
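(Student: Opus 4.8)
The two implications go in opposite directions, so I would use mirror-image arguments: a term-by-term \emph{summation} of the exponential Taylor series for $(i)$, and a term-by-term \emph{lower bound} on the exponential for $(ii)$. In both cases the bridge between exponential weights and the sequence ${\bf m}_{2n}$ is the elementary identity $\exp(x)=\sum_n x^n/n!$ together with log-convexity of moments in the order.

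For $(i)$, I would expand $\exp\big(\frac{\sigma_0^{1/\alpha}\langle v\rangle_i^{2/\alpha}}{2}\big)$ as a series, multiply by $g_i\geq0$, integrate, and sum over $i$; by Tonelli (every term nonnegative) this gives
$$\sum_{i=1}^I\int_{\R^3}\exp\Big(\frac{\sigma_0^{1/\alpha}\langle v\rangle_i^{2/\alpha}}{2}\Big)g_i\,dv=\sum_{n\geq0}\frac{\sigma_0^{n/\alpha}}{2^{n}n!}\,{\bf m}_{2n/\alpha}[\G],$$
a power series whose coefficients are \emph{fractional} cumulative moments. Since ${\bf m}_k[\G]$ is a finite sum over $i$ of functions of $k$ each log-convex (H\"older in $v$), it is itself log-convex in $k$, so interpolating between the orders $0$ and $2n$ gives ${\bf m}_{2n/\alpha}[\G]\leq{\bf m}_0[\G]^{1-1/\alpha}\,{\bf m}_{2n}[\G]^{1/\alpha}$. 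Plugging in the hypothesis ${\bf m}_{2n}[\G]\leq K(n!)^{\alpha}\sigma_0^{-n}$ makes the $n$-th term at most ${\bf m}_0^{1-1/\alpha}K^{1/\alpha}2^{-n}$, and summing the geometric series gives the factor $2$. This direction is routine bookkeeping.

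For $(ii)$ I would run the same machine in reverse. Start from $e^{x}\geq x^{m}/m!$ with $x=\sigma_0\langle v\rangle_i^{\rho}$, i.e. $\langle v\rangle_i^{\rho m}\leq(m!/\sigma_0^{m})\,e^{\sigma_0\langle v\rangle_i^{\rho}}$ for each integer $m\geq0$, take $m=m_n:=\lceil 2n/\rho\rceil$, use $\langle v\rangle_i\geq1$ to pass from $\langle v\rangle_i^{2n}$ to $\langle v\rangle_i^{\rho m_n}$, integrate against $g_i$ and apply the hypothesis to obtain ${\bf m}_{2n,i}[g_i]\leq(m_n!/\sigma_0^{m_n})\,K/I$. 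The next step is a Stirling comparison $m_n!\leq C_\rho\Lambda_\rho^{\,n}(n!)^{2/\rho}$ with $\Lambda_\rho$ essentially $(2/\rho)^{2/\rho}$ (polynomial and $\sqrt{\cdot}$ corrections absorbed, using $2/\rho\geq1$), which, combined with $\sigma_0^{-m_n}\leq\sigma_0^{-1}\sigma_0^{-2n/\rho}$ (here $\sigma_0\leq1$ enters), yields
$$\frac{\sigma_1^{\,n}\,{\bf m}_{2n,i}[g_i]}{(n!)^{2/\rho}}\leq\frac{C_\rho K}{\sigma_0\,I}\Big(\frac{\sigma_1\Lambda_\rho}{\sigma_0^{2/\rho}}\Big)^{\!n}.$$
Finally I would fix $\sigma_1=\sigma_1(\rho,\sigma_0,K)$ small enough that the geometric base is $\leq1$ and the prefactor times that base is $\leq1$, so that each $n\geq1$ term is $\leq1/I$; the $n=0$ term is just $\|g_i\|_{L^1}$, already bounded by $K/I$ via the hypothesis.

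The hard part is entirely in $(ii)$: getting the factorial comparison $m_n!/(n!)^{2/\rho}\lesssim\Lambda_\rho^{\,n}$ with constants depending only on $\rho$ (the ceiling and the non-integer exponent $2/\rho$ make this slightly fiddly), and then checking that a single scale $\sigma_1$, depending only on $(\rho,\sigma_0,K)$, can dominate the whole $n$-indexed family of inequalities at once. Part $(i)$, by contrast, rests only on the log-convexity interpolation for the mixture moments and a geometric sum, with no real obstruction.
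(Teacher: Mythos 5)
Your proposal is correct and follows essentially the same route as the paper: for $(i)$ the term-by-term series expansion, the H\"older/log-convexity interpolation ${\bf m}_{2n/\alpha}[\G]\leq{\bf m}_0^{1-1/\alpha}{\bf m}_{2n}[\G]^{1/\alpha}$, and a geometric sum; for $(ii)$ the bound ${\bf m}_{2n,i}[g_i]\leq K\,k_n!/(I\sigma_0^{k_n})$ with $k_n=\lceil 2n/\rho\rceil$ (the paper obtains it from the terms of the series $\sum_n\sigma_0^n{\bf m}_{\rho n,i}[g_i]/n!$, which is just the integrated form of your pointwise $e^x\geq x^m/m!$), followed by the Stirling comparison $k_n!\lesssim(4/\rho)^{2n/\rho}(n!)^{2/\rho}$, the use of $\sigma_0\leq1$ via $\sigma_0^{k_n}\geq\sigma_0^{2n/\rho+1}$, and a single smallness choice of $\sigma_1$ uniform in $n$. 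The only loose end --- shared with the paper's own proof --- is the $n=0$ term in $(ii)$: the hypothesis yields only ${\bf m}_{0,i}[g_i]\leq K/I$ rather than $1/I$, so the stated conclusion really concerns $n\geq1$ unless $K$ is replaced by $1$ there.
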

\begin{proof}
We start with $(i)$ noticing that by Holder's inequality (recall that $\alpha\geq 1$),
$${\bf m}_{\frac{2n}{\alpha}}[\G]\leq {\bf m}_{2n}[\G]^{\frac{1}{\alpha}}{\bf m}_0^{1-\frac{1}{\alpha}}\leq {\bf m}_0^{1-\frac{1}{\alpha}}K^{\frac{1}{\alpha}}\sigma_0^{-\frac{n}{\alpha}}\,n!\,.$$
Then,
$$\sum_{i=1}^I\int_{\R^3}\exp\Big(\frac{\sigma_0^{\frac{1}{\alpha}}\langle v\rangle_i^{\frac{2}{\alpha}}}{2}\Big)g_i(v) dv= \sum_{n\geq 0}\frac{\sigma_0^\frac{n}{\alpha}{\bf m}_{\frac{2n}{\alpha}}[\G]}{2^n n!}\leq {\bf m}_0^{1-\frac{1}{\alpha}}K^{\frac{1}{\alpha}}\sum_{n\geq 0}2^{-n}=2\,{\bf m}_0^{1-\frac{1}{\alpha}}K^{\frac{1}{\alpha}}\,.
$$
Regarding to $(ii)$, note that
$$\sup_{n\geq 0} \frac{\sigma_{0}^n{\bf m}_{\rho n,i}[g_i]}{n!}\leq \sum_{n\geq 0}\frac{\sigma_{0}^n {\bf m}_{\rho n,i}[g_i]}{n!}=\int_{\R^3}\exp\big(\sigma_{0}\langle v\rangle_i^{\rho}\big)g_i(v) dv \leq \frac{K}{I}\,.$$
For $n\geq 1$ we set $k_{n}=\lceil \frac{2n}{\rho}\rceil \in [\frac{2n}{\rho}, \frac{2n}{\rho}+1)$.  Using that $\rho \, k_{n}\geq 2n$ we obtain that 
$${\bf m}_{2n,i}[g_i]\leq {\bf m}_{\rho k_{n},i}[g_i]\leq \frac{K k_{n}!}{I\sigma_{0}^{k_{n}}}\,,$$
and consequently,
$$\frac{\sigma_{1}^n {\bf m}_{2n,i}[g_i]}{(n!)^{2/\rho}}\leq \frac{\sigma_{1}^n K k_{n}!}{I\sigma_{0}^{k_{n}}(n!)^{2/\rho}}.$$
Invoking Stirling`s formula $ n!\sim \sqrt{2\pi n}(n/e)^n$ and since $k_n=\lceil\frac{2n}{\rho}\rceil$  we conclude that for some constant $A\in(0,\infty)$ depending only on $\rho$ it holds that 
$$\frac{k_{n}!}{(n!)^{2/\rho}}\leq A \bigg(\frac{4}{\rho}\bigg)^{\frac{2n}{\rho}},\qquad n\geq 1\,.$$
We refer to \cite{NF} for additional details.   Moreover, since $\sigma_{0}\in(0,1]$ then $\sigma_{0}^{k_{n}}\geq \sigma_{0}^{\frac{2n}{\rho}+1}$, and consequently 
$$
\frac{\sigma_{1}^n {\bf m}_{2n,i}[g_i]}{(n!)^{2/\rho}}\leq \frac{K\,A\,\sigma_{1}^n}{I\sigma_{0}^{\frac{2n}{\rho}+1}}\bigg(\frac{4}{\rho}\bigg)^{\frac{2n}{\rho}}<1,\qquad n\geq \frac{1}{I}\,,
$$
after choosing, for example,
$$
\sigma_{1}\leq \frac{1}{2}\inf_{n\geq1}\frac{I\sigma^{\frac{2}{\rho}+1}_{0}}{(K\,A)^{\frac{1}{n}}}\Big(\frac{\rho}{4}\Big)^{\frac{2}{\rho}}\,.
$$
This concludes the proof.
\end{proof}
\subsubsection{ Generation of exponential moments.}
First we introduce for $\sigma\in (0,1]$ and $\alpha>0$, to be chosen later sufficiently small and large respectively, $p\geq 2$ and $t>0$
$$
E_p[\F](t):=\sum_{n=0}^p \frac{(\sigma t)^{2n/{\lambda^\natural}}{\bf m}_{2n}[\F](t)}{ (n!)^{\alpha}}.
$$
Additionally, we define the objects, for $1\leq i,j\leq I$,  
$$E_{p}^{i}[f_i](t):=\sum_{n=0}^p \frac{(\sigma t)^{2n/{\lambda^\natural}}{\bf m}_{2n,i}[f_i](t)}{ (n!)^{\alpha}},\qquad F_{p}^{ij}[f_i](t):=\sum_{n=2}^p\frac{n^{s_{ij}/2}(\sigma t)^{2n/\lambda^\natural}{\bf m}_{2n+\lambda_{ij},i}[f_i](t)}{(n!)^{\alpha}},$$
$$G_{p}^{ij}[f_i,f_j](t):=\sum_{n=2}^p \frac{(\sigma t)^{2n/\lambda^\natural}\mathcal{S}_{n}^{ij}(t)}{(n!)^{\alpha}},\qquad H_{p}^i[f_i](t):=\sum_{n=0}^p \frac{n(\sigma t)^{2n/{\lambda^\natural-1}}{\bf m}_{2n,i}[f_i](t)}{ (n!)^{\alpha}}.$$
We start by time differentiation of $E_p[\F](t)$ and then using Proposition \ref{Momlem} item (i). Since ${\bf m}_{0}'={\bf m}_{2}'=0$, we deduce that
\begin{align}\label{add-moments}
\begin{split}
\frac{{\rm d}}{{\rm d}t} E_p[\F](t)&=\sum_{n=0}^p \bigg[\frac{(\sigma t)^{2n/{\lambda^\natural}}{\bf m}'_{2n}[\F](t)}{ (n!)^{\alpha}} + \frac{2\sigma}{\lambda^\natural}\frac{n(\sigma t)^{2n/{\lambda^\natural}-1}{\bf m}_{2n}[\F](t)}{ (n!)^{\alpha}}\bigg]\\
&=\sum_{n=0}^p\bigg[\frac{(\sigma t)^{2n/{\lambda^\natural}}\sum_{i,j=1}^I \partial_t{\bf m}^{ij}_{2n}(t)}{ (n!)^{\alpha}}+\frac{2\sigma}{\lambda^\natural} \frac{n(\sigma t)^{2n/{\lambda^\natural-1}}\sum^{I}_{i=1}{\bf m}_{2n,i}[f_i](t)}{(n!)^{\alpha}}\bigg]\\
&\leq \sum^I_{i,j=1} \bigg[ -C^1_{ij}\Big(F_{p}^{ij}[f_i](t)+F_p^{ji}[f_j](t)\Big)+C^2_{ij}G_{p}^{ij}[f_i,f_j](t)\bigg]+\frac{2\sigma}{\lambda^\natural}  \sum^{I}_{i=1}H_{p}^i[f_i](t). 
\end{split}
\end{align}
The following lemma is crucial as it expresses the convolution structure of the moments in the summation.  This is where the nonlinear part is estimated.
\begin{lem}[Moment Convolution]\label{Gen1}
Let $1\leq i,j\leq I$, $\lambda^\natural>0$, $s_{ij}\in(0,2)$, and $\F_0\in U(D_0,E_0)$.  Given $\epsilon>0$ there are explicit constants $B_{\epsilon}$ and $D_{\epsilon}$, depending on the parameters of the model, $E_0$, and $D_0$, such that for $\sigma\in(0,1]$, $\alpha\geq 1$ and $p\geq 2$, the following estimate holds
\begin{align*}
G^{ij}_{p}[f_i, f_j](t) \leq &B_{\epsilon}\,\sigma^{2/\lambda^\natural}\,\Big[ F^{ij}_p[f_i](t)+F^{ji}_p[f_j](t)\Big]\\
&+ \epsilon\, \Big[F^{ij}_p[f_i](t)E^{j}_p[f_j](t)+F^{ji}_p[f_j](t)E^{i}_p[f_i](t)\Big]+\sigma^{2/\lambda^\natural}\,D_{\epsilon},\qquad t\in(0,1]\,.
\end{align*}
\end{lem}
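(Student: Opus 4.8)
\emph{Proof strategy.} The plan is to expand $G^{ij}_p$ as a double sum in the moment order $n$ and the convolution index $a$, recast it as a truncated Cauchy product of the series defining $E_p$ and $F_p$, and then split the $a$-summation at a large threshold $N_\epsilon=N_\epsilon(\epsilon)$.

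First I would open the definitions and write
$$G^{ij}_p[f_i,f_j](t)=\sum_{n=2}^p\frac{(\sigma t)^{2n/\lambda^\natural}}{(n!)^\alpha}\sum_{a=1}^{\lfloor n/2\rfloor}\binom{n}{a}\frac{n^{s_{ij}/2}}{a^{s_{ij}/2+1}}\Big({\bf m}_{2a,j}(t){\bf m}_{2(n-a)+\lambda_{ij},i}(t)+{\bf m}_{2a,i}(t){\bf m}_{2(n-a)+\lambda_{ij},j}(t)\Big).$$
Because $s_{ij}=s_{ji}$ and $\lambda_{ij}=\lambda_{ji}$, the two products are interchanged by $i\leftrightarrow j$, so it suffices to treat the first. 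The algebraic key is $\binom{n}{a}/(n!)^\alpha=\binom{n}{a}^{1-\alpha}/((a!)^\alpha((n-a)!)^\alpha)$; using $\alpha\geq1$ to drop $\binom{n}{a}^{1-\alpha}\leq1$, together with $n^{s_{ij}/2}\leq2^{s_{ij}/2}(n-a)^{s_{ij}/2}$ (valid since $a\leq n/2$), the generic term is bounded by
$$\frac{2^{s_{ij}/2}}{a^{s_{ij}/2+1}}\,\frac{(\sigma t)^{2a/\lambda^\natural}{\bf m}_{2a,j}(t)}{(a!)^\alpha}\,\frac{(n-a)^{s_{ij}/2}(\sigma t)^{2(n-a)/\lambda^\natural}{\bf m}_{2(n-a)+\lambda_{ij},i}(t)}{((n-a)!)^\alpha}.$$
Here the last factor is the $(n-a)$-th term of $F^{ij}_p[f_i]$ and the middle factor (without the weight $a^{-s_{ij}/2-1}$) is the $a$-th term of $E^j_p[f_j]$. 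Swapping the summation order so that $a$ is outer and writing $m=n-a\ (\geq a)$ exposes the Cauchy-product structure.

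Now split $\sum_{a}=\sum_{1\leq a\leq N_\epsilon}+\sum_{a>N_\epsilon}$. In the tail $a>N_\epsilon$ one has $a\geq2$, hence $m=n-a\geq a\geq2$, so all indices sit inside the index ranges of $E^j_p$ and $F^{ij}_p$; bounding $a^{-s_{ij}/2-1}\leq(N_\epsilon+1)^{-s_{ij}/2-1}$ and completing both sums controls this piece by $\tfrac{2^{s_{ij}/2}}{(N_\epsilon+1)^{s_{ij}/2+1}}\,E^j_p[f_j](t)\,F^{ij}_p[f_i](t)$, which is $\leq\epsilon\,E^j_p[f_j](t)F^{ij}_p[f_i](t)$ once $N_\epsilon$ is chosen. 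In the head $1\leq a\leq N_\epsilon$, the moment ${\bf m}_{2a,j}(t)$ has order $\leq2N_\epsilon$ and is thus controlled \emph{a priori}: for $a=1$ by conservation of total energy, and for $2\leq a\leq N_\epsilon$ by the generation estimate of Theorem \ref{Mom}, ${\bf m}_{2a}[\F](t)\lesssim1+t^{-(2a-2)/\lambda^\natural}$; together with $\sigma,t\leq1$ this gives $(\sigma t)^{2a/\lambda^\natural}{\bf m}_{2a,j}(t)\lesssim\sigma^{2/\lambda^\natural}$ on $(0,1]$. Since the head $a$-sum is finite, the rest is $\lesssim N_\epsilon\,\sigma^{2/\lambda^\natural}\sum_{m\geq1}\frac{m^{s_{ij}/2}(\sigma t)^{2m/\lambda^\natural}{\bf m}_{2m+\lambda_{ij},i}(t)}{(m!)^\alpha}$; the $m\geq2$ part is $F^{ij}_p[f_i](t)$, yielding $B_\epsilon\,\sigma^{2/\lambda^\natural}F^{ij}_p[f_i](t)$, while the single boundary term $m=1$ equals $(\sigma t)^{2/\lambda^\natural}{\bf m}_{2+\lambda_{ij},i}(t)\lesssim\sigma^{2/\lambda^\natural}$ on $(0,1]$ (Theorem \ref{Mom} with $\lambda_{ij}\leq2$), which goes into $\sigma^{2/\lambda^\natural}D_\epsilon$. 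Adding the $i\leftrightarrow j$ contributions of the second product in $\mathcal{S}^{ij}_n$ gives the asserted inequality.

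The crux --- and the only place real work occurs --- is the scale separation in this split: $N_\epsilon$ must at once make the tail quadratic term as small as desired, which is precisely what the summable weight $a^{-s_{ij}/2-1}$ furnished by the Povzner Lemma \ref{Povlem} buys, and keep the head strictly \emph{linear} in $F^{ij}_p$, which is possible because every low-order moment ${\bf m}_{2a,j}$ with $a\leq N_\epsilon$ is an a priori bounded quantity after multiplication by the vanishing prefactor $(\sigma t)^{2a/\lambda^\natural}$, this factor absorbing the $t\to0^+$ singularity of the generation bound. The only remaining bookkeeping point is the isolated boundary index $m=n-a=1$, disposed of using $2+\lambda_{ij}<4$.
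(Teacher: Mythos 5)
Your proof is correct and follows essentially the same route as the paper's: the same rewriting of $\binom{n}{a}/(n!)^\alpha$ using $\alpha\geq1$, the bound $n^{s_{ij}/2}\lesssim(n-a)^{s_{ij}/2}$, a split of the convolution index at $N_\epsilon\sim\epsilon^{-1}$ exploiting the summable Povzner weight $a^{-s_{ij}/2-1}$, and Theorem \ref{Mom} to absorb the low-order moments into $\sigma^{2/\lambda^\natural}$ on $(0,1]$, including the isolated $m=1$ boundary term. The only (cosmetic) difference is that you split the double sum before completing it to a product, which spares you the paper's final absorption step $E^i_p+E^j_p\leq {\bf m}_0+\sigma{\bf m}_2+F^{ij}_p+F^{ji}_p$.
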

\begin{proof}
Using the definition of $\mathcal{S}_{n}^{ij}(t)$ it follows that
\begin{align*}
G_{p}^{ij}[f_i,f_j](t)=\sum_{n=2}^p (\sigma t)^{2n/\lambda^\natural}\sum^{\lfloor n/2 \rfloor}_{a=1}\Big(\begin{matrix}n\\a\end{matrix}\Big)\frac{n^{s_{ij}/2}}{a^{s_{ij}/2+1}(n!)^{\alpha}}&\Big({\bf m}_{2a,i}(t){\bf m}_{2(n-a)+\lambda_{ij},j}(t)\\
&\qquad\qquad +{\bf m}_{2a,j}(t){\bf m}_{2(n-a)+\lambda_{ij},i}(t)\Big).
\end{align*}
For $1\leq a\leq \lfloor n/2 \rfloor$ one has that $n^{s_{ij}/2}\leq 2(n-a)^{s_{ij}/2}$.  Therefore,
\begin{align*}
G^{ij}_{p}[f_i,f_j](t)&\leq 2\sum_{n=2}^p (\sigma t)^{2n/\lambda^\natural}\sum^{n-1}_{a=1}\Big(\begin{matrix}n\\a\end{matrix}\Big)^{\alpha}\frac{(n-a)^{s_{ij}/2}}{a^{s_{ij}/2+1}(n!)^{\alpha}}\Big({\bf m}_{2a,i}(t){\bf m}_{2(n-a)+\lambda_{ij},j}(t) \\
&\hspace{7cm}+{\bf m}_{2a,j}(t){\bf m}_{2(n-a)+\lambda_{ij},i}(t)\Big)\\
&= 2\sum_{n=2}^p \sum^{n-1}_{a=1}\frac{(\sigma t)^{2a/\lambda^\natural}{\bf m}_{2a,i}(t)}{(a!)^{\alpha}}\frac{(n-a)^{s_{ij}/2}(\sigma t)^{2(n-a)/\lambda^\natural}{\bf m}_{2(n-a)+\lambda_{ij},j}(t)}{a^{s_{ij}/2+1}((n-a)!)^{\alpha}}\\
&\qquad + \frac{(n-a)^{s_{ij}/2}(\sigma t)^{2(n-a)/\lambda^\natural}{\bf m}_{2(n-a)+\lambda_{ij},i}(t)}{((n-a)!)^{\alpha}}\frac{(\sigma t)^{2(n-a)/\lambda^\natural)}{\bf m}_{2a,j}(t)}{a^{s_{ij}/2+1}(a!)^{\alpha}}\\
&=:\mathcal{G}_{p,1}^{ij}[f_i,f_j](t)+\mathcal{G}_{p,2}^{ij}[f_i,f_j](t),
\end{align*}
with obvious definitions for each term.  Now, note that interchanging the summation it hols that
\begin{align*}
 \mathcal{G}_{p,1}^{ij}[f_i,f_j](t)&= 2\sum^{p-1}_{a=1}\sum_{l=a}^{p-a}\frac{l^{s_{ij}/2}(\sigma t)^{2l/\lambda^\natural}{\bf m}_{2l+\lambda_{ij},j}(t)}{(l!)^{\alpha}}\frac{(\sigma t)^{2a/\lambda^\natural}{\bf m}_{2a,i}(t)}{a^{s_{ij}/2+1}(a!)^{\alpha}}\\
 &\leq 2\bigg(\sum_{l=1}^{p}\frac{l^{s_{ij}/2}(\sigma t)^{2l/\lambda^\natural}{\bf m}_{2l+\lambda_{ij},j}(t)}{(l!)^{\alpha}}\bigg)\bigg(\sum_{a=1}^p\frac{(\sigma t)^{2a/\lambda^\natural}{\bf m}_{2a,i}(t)}{a^{s_{ij}/2+1}(a!)^{\alpha}}\bigg)\\
 & = \Big(F^{ji}_{p}[f_j](t)+(\sigma t)^{2/\lambda^\natural}{\bf m}_{2+\lambda_{ij},j}(t)\Big)I^{ij}_{p,1}[f_i](t), 
\end{align*}
where $$I^{ij}_{p,1}[f_i](t):=2\sum_{a=1}^p\frac{(\sigma t)^{2a/\lambda^\natural}{\bf m}_{2a,i}(t)}{a^{s_{ij}/2+1}(a!)^{\alpha}}.$$
Setting $N_{\epsilon}=\lceil 2\,{\epsilon}^{-1}\rceil$, it holds that for $a\geq N_{{\epsilon}}$ implies $2\,a^{-1}\leq \epsilon$.  Hence,
$$I^{ij}_{p,1}[f_j](t)\leq \epsilon\, E^{i}_{p}[f_i](t)+ J_{\epsilon}^{i}[f_i](t) \quad{\rm where}\quad J_{\epsilon}^{i}[f_i](t)=2\sum_{a=1}^{N_{\epsilon}}\frac{(\sigma t)^{2a/\lambda^\natural}{\bf m}_{2a,i}(t)}{(a!)^{\alpha}}.$$
Using the first result of Theorem \ref{Mom}, we deduce that  there are explicit constants $C$ and $C_{\epsilon}$ such that
$$(\sigma t)^{2/\lambda^\natural}{\bf m}_{2+\lambda_{ij},j}(t)\leq C(\sigma t)^{2/\lambda^\natural}(1+t^{-\frac{\lambda_{ij}}{\lambda^\natural}})\leq 2\,C \, \sigma^{2/\lambda^\natural},\qquad\sigma\in(0,1], \quad t\in(0,1]\,,$$
and
\begin{align*}
J_{\epsilon}^{i}[f_i](t)&\leq C_{\epsilon}\sum_{a=1}^{N_{\epsilon}}\frac{(\sigma t)^{2a/\lambda^\natural}(1+t^{-\frac{2a-2}{\lambda^\natural}})}{(a!)^{\alpha}}\leq C_{\epsilon}\sum_{a=1}^{\infty}\frac{2}{(a!)^{\alpha}} =: A_{\epsilon}\,\sigma^{2/\lambda^\natural}\,.
\end{align*}
Thus,
\begin{equation}\label{G2}
\mathcal{G}_{p,1}^{ij}[f_i,f_j](t)\leq \Big(F^{ji}_{p}[f_j](t)+2\,C\,\sigma^{2/\lambda^\natural}\Big)\Big(\epsilon\, E_{p}^{i}[f_i](t)+A_{\epsilon}\,\sigma^{2/\lambda^\natural}\Big).
\end{equation}
Similarly, we also have that
\begin{equation}\label{G4}
\mathcal{G}_{p,2}^{ij}[f_i,f_j](t)\leq \Big(F^{ij}_{p}[f_i](t)+2\,C\,\sigma^{2/\lambda^\natural}\Big)\Big(\epsilon\, E^{j}_{p}[f_j](t)+A_{\epsilon}\,\sigma^{2/\lambda^\natural}\Big).
\end{equation}
In this way, adding \eqref{G2} and \eqref{G4}, it holds that
\begin{align*}
G_{p}^{ij}[f_i,f_j](t)&\leq  \mathcal{G}_{p,1}^{ij}[f_i,f_j](t)+\mathcal{G}_{p,2}^{ij}[f_i,f_j](t)\\
&\leq \sigma^{2/\lambda^\natural} A_{\epsilon}\Big[ F^{ij}_p[f_i](t)+F^{ji}_{p}[f_j](t)\Big] +2\,\sigma^{2/\lambda^\natural}\epsilon\,C\,\Big[E^{i}_{p}[f_i](t)+E^{j}_{p}[f_j](t)\Big] \\
&\qquad+ {\epsilon} \Big[F^{ij}_p[f_i](t)E^{j}_{p}[f_j](t)+F^{ji}_{p}[f_j](t)E^{i}_{p}[f_i](t)\Big]+4\,\sigma^{4/\lambda^\natural}A_{\epsilon}\,C\,.
\end{align*}
Furthermore, for $t\in(0,1]$ it holds that
\begin{align*}
E^{i}_{p}[f_i](t)+E^{j}_{p}[f_j](t)&\leq {\bf m}_{0}(t)+\sigma \,{\bf m}_2(t)+F^{ij}_{p}[f_i](t)+F^{ji}_{p}[f_j](t)\,.
\end{align*} 
Consequently, we are led to
\begin{align*}
G^{ij}_{p}[f_i, f_j](t)&\leq B_{\epsilon}\,\sigma^{2/\lambda^\natural}\Big[ F^{ij}_p[f_i](t)+F^{ji}_{p}[f_j](t)\Big]\\
&\qquad+ \epsilon\, \Big[F^{ij}_p[f_i](t)E^{j}_{p}[f_j](t)+F^{ji}_{p}[f_j](t)E^{i}_{p}[f_i](t)\Big]+\sigma^{2/\lambda^\natural}\,D_{\epsilon},\qquad t\in(0,1]\,,
\end{align*}
where $$B_{\epsilon}:=2 A_{\epsilon}+2\,\epsilon\,C\,,\quad\text{and}\quad D_{\epsilon}:=4\,\sigma^{2/\lambda^\natural}A_{\epsilon}\,C+2\,\epsilon\,C\,\big({\bf m}_{0}+\sigma \,{\bf m}_2\big)\,.$$
This concludes the proof.
\end{proof}
\begin{lem}\label{Gen2}
Let $1\leq i,j\leq I$, $\lambda^\natural>0$, $s^\natural>0$, $\alpha =\max \big\{1, \frac{2-s^\natural}{\lambda^\natural}\big\}$, and $\F_0\in U(D_0,E_0)$.  There are explicit constants $K$ and $L$, depending on the parameters of the model, $E_0$, and $D_0$, such that for any $\sigma\in(0,1]$ and $p\geq 2$, it holds that
$$H^i_{p}[f_i](t)\leq K\, F^{i}_{p}[f_i](t)+L,\qquad t\in(0,1]\,,$$
where $F^i_p[f_i](t):=\sum_{n=2}^p  \frac{n^{\bar{\bar{s}}_{i}/2}(\sigma t)^{2n/\lambda^\natural}{\bf m}_{2n+\bar{\bar{\lambda}}_i,i}(t)}{(n!)^{\alpha}}.$
\end{lem}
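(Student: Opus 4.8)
The plan is to run the classical interpolation-plus-Young scheme. First I would peel off the lowest-order terms of $H^i_p$: the $n=0$ summand is $0$, and since $\lambda_{ij}\le 2$ for all $i,j$ forces $\lambda^\natural\le 2$, hence $\tfrac2{\lambda^\natural}-1\ge 0$, the $n=1$ summand equals $(\sigma t)^{2/\lambda^\natural-1}\,{\bf m}_{2,i}(t)\le {\bf m}_2$ on $(0,1]$ by conservation of total energy (from $\F_0\in U(D_0,E_0)$ one has ${\bf m}_{2,i}(t)\le {\bf m}_2[\F_0]\le E_0$). These two contributions go into $L$. What then remains is to bound, for each fixed $n\ge 2$, the summand of $H^i_p$ by $\epsilon$ times the corresponding summand of $F^i_p$, plus a quantity summable in $n$ uniformly in $\sigma,t\in(0,1]$.

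For $n\ge 2$ I would insert the H\"older interpolation ${\bf m}_{2n,i}(t)\le {\bf m}_{2,i}(t)^{1-\theta_n}{\bf m}_{2n+\bar{\bar\lambda}_i,i}(t)^{\theta_n}$ with $\theta_n=\frac{2n-2}{2n+\bar{\bar\lambda}_i-2}$, so that $\tfrac1{1-\theta_n}=\tfrac{2n+\bar{\bar\lambda}_i-2}{\bar{\bar\lambda}_i}$ and $\tfrac{\theta_n}{1-\theta_n}=\tfrac{2(n-1)}{\bar{\bar\lambda}_i}$. Matching powers, and writing $\mathfrak b_n:=\frac{n^{\bar{\bar s}_i/2}(\sigma t)^{2n/\lambda^\natural}{\bf m}_{2n+\bar{\bar\lambda}_i,i}(t)}{(n!)^\alpha}$ for the building block of $F^i_p$, one gets
$$\frac{n(\sigma t)^{2n/\lambda^\natural-1}{\bf m}_{2n,i}(t)}{(n!)^\alpha}\le \mathfrak b_n^{\,\theta_n}R_n^{\,1-\theta_n},\qquad R_n:=\frac{n^{(1-\bar{\bar s}_i\theta_n/2)/(1-\theta_n)}(\sigma t)^{2n/\lambda^\natural-1/(1-\theta_n)}{\bf m}_{2,i}(t)}{(n!)^\alpha}.$$
Weighted Young (AM--GM with exponents $\tfrac1{\theta_n},\tfrac1{1-\theta_n}$) then gives $\mathfrak b_n^{\theta_n}R_n^{1-\theta_n}\le \epsilon\,\mathfrak b_n+\epsilon^{-2(n-1)/\bar{\bar\lambda}_i}R_n$ for any $\epsilon>0$; summing the first piece over $n$ produces exactly $\epsilon\,F^i_p[f_i](t)$, so $K=\epsilon$ is admissible, and the whole proof reduces to bounding $\sum_{n\ge 2}\epsilon^{-2(n-1)/\bar{\bar\lambda}_i}R_n$ by a constant.

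The decisive step — and the place I expect the real work — is showing this remainder series is finite uniformly in $\sigma,t\in(0,1]$ and in $p$. The exponent of $\sigma t$ inside $R_n$ is $2n\big(\tfrac1{\lambda^\natural}-\tfrac1{\bar{\bar\lambda}_i}\big)+\tfrac2{\bar{\bar\lambda}_i}-1\ge 0$, using $\bar{\bar\lambda}_i\ge\lambda^\natural$ (the definition of $\lambda^\natural$) and $\bar{\bar\lambda}_i\le 2$, so $(\sigma t)^{\cdots}\le 1$ drops out, along with ${\bf m}_{2,i}(t)\le {\bf m}_2$. A short computation collapses the $n$-power of $R_n$ to $1+\tfrac{(2-\bar{\bar s}_i)(n-1)}{\bar{\bar\lambda}_i}$, so by Stirling the general term of the surviving series is comparable to ${\bf m}_2\big(e^{\alpha}\epsilon^{-2/\bar{\bar\lambda}_i}\big)^{n}n^{(c_i-\alpha)n+O(1)}$ with $c_i:=\tfrac{2-\bar{\bar s}_i}{\bar{\bar\lambda}_i}$. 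Since $\bar{\bar s}_i\ge s^\natural$ and $\bar{\bar\lambda}_i\ge\lambda^\natural$ give $c_i\le\tfrac{2-s^\natural}{\lambda^\natural}\le\alpha$, the factor $n^{(c_i-\alpha)n}$ is $\le 1$, and choosing $\epsilon$ large enough that $e^{\alpha}\epsilon^{-2/\bar{\bar\lambda}_i}<1$ (for instance $\epsilon>e^{\alpha\bar{\bar\lambda}_i/2}$) makes the series converge to a finite value depending only on the parameters of the model, $\alpha$, $\bar{\bar s}_i$, $\bar{\bar\lambda}_i$ and $E_0$; adding the $n=0,1$ contributions gives $L$, while $K=\epsilon$. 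The main obstacle is exactly this calibration: one must keep every constant coming out of the interpolation and of Young's inequality independent of $\sigma$ and $t$, and the hypothesis $\alpha=\max\{1,(2-s^\natural)/\lambda^\natural\}$ is precisely what forces $c_i\le\alpha$ for every species (the bound $\alpha\ge1$ being needed so that the equivalence Lemma \ref{lemexp1}(i) applies downstream), with the borderline case $c_i=\alpha$ surviving only because $\epsilon$ may be taken arbitrarily large.
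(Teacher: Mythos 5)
Your argument is correct and is essentially the paper's own proof: the paper's dichotomy on whether ${\bf m}_{2n,i}(t)$ exceeds the threshold $\big(n^{1-\bar{\bar{s}}_i/2}/(\kappa\sigma t)\big)^{(2n-2)/\bar{\bar{\lambda}}_i}$ is precisely your weighted Young step with $\epsilon=\kappa$, and both proofs then interpolate ${\bf m}_{2n,i}$ between ${\bf m}_{2,i}$ and ${\bf m}_{2n+\bar{\bar{\lambda}}_i,i}$, discard the nonnegative powers of $\sigma t$ using $\lambda^\natural\leq\bar{\bar{\lambda}}_i\leq 2$, and sum the remainder via Stirling together with $\alpha\geq(2-\bar{\bar{s}}_i)/\bar{\bar{\lambda}}_i$. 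The only differences (order of the interpolation versus the Young step, and the exact expression of $K$) are immaterial.
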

\begin{proof}
First observe that for $\kappa \geq 1$, to be chosen later sufficiently large,
\begin{itemize}
\item[$\cdot$] If ${\bf m}_{2n,i}(t)\leq \Big(\frac{n^{1-\frac{\bar{\bar{s}}_{i}}{2}}}{\kappa\sigma t}\Big)^{\frac{2n-2}{\bar{\bar{\lambda}}_i}}$, then $\frac{n\,{\bf m}_{2n,i}(t)}{\sigma t}\leq \frac{n}{\sigma t}\Big(\frac{n^{1-\frac{\bar{\bar{s}}_{i}}{2}}}{\kappa \sigma t}\Big)^\frac{2n-2}{\bar{\bar{\lambda}}_i}$.
\item[$\cdot$] If ${\bf m}_{2n,i}(t)\geq \Big(\frac{n^{1-\frac{\bar{\bar{s}}_{i}}{2}}}{\kappa\sigma t}\Big)^{\frac{2n-2}{\bar{\bar{\lambda}}_i}}$, then $\frac{n\,{\bf m}_{2n,i}(t)}{\sigma t}\leq \kappa n^\frac{\bar{\bar{s}}_{i}}{2} {\bf m}_{2n,i}(t)^{1+\frac{\bar{\bar{\lambda}}_i}{2n-2}}$.
 \end{itemize}
 Thus, overall we get that
 $$\frac{n\,{\bf m}_{2n,i}(t)}{\sigma t}\leq \kappa\, n^{\bar{\bar{s}}_{i}/2}{\bf m}_{2n,i}(t)^{1+\frac{\bar{\bar{\lambda}}_i}{2n-2}}+\frac{n}{\sigma t}\Big(\frac{n^{1-\bar{\bar{s}}_{i}/2}}{\kappa \sigma t}\Big)^{\frac{2n-2}{\bar{\bar{\lambda}}_i}}.$$
Interpolation gives that
$$
{\bf m}_{2n,i}(t)^{1+\frac{\bar{\bar{\lambda}}_i}{2n-2}}\leq {\bf m}_{2n+\bar{\bar{\lambda}}_i,i}(t)\,{\bf m}_{2,i}^{\frac{\bar{\bar{\lambda}}_i}{2n-2}}(t)\leq  {\bf m}_{2n+\bar{\bar{\lambda}}_i,i}(t)\,\max\{1,{\bf m}_{2}\}\,,
$$
and consequently,
\begin{align*}
H^i_{p}[f_i](t)\leq \kappa\,{\bf m}_{2,i}(t)(\sigma t)^{2/\lambda^\natural}+\kappa\max\{1,{\bf m}_{2}\}\,F^{i}_{p}[f_i](t) + (\sigma t)^{2/\bar{\bar{\lambda}}_i-1}\sum_{n=1}^p \frac{n^{\frac{(2-\bar{\bar{s}}_{i})n}{\bar{\bar{\lambda}}_i}-\frac{(2-\bar{\bar{s}}_{i})}{\bar{\bar{\lambda}}_i}+1}}{(n!)^{\alpha} \kappa^{(2n-2)/\bar{\bar{\lambda}}_i}}\,. 
\end{align*}
Conservation of mass and energy implies that
$$
{\bf m}_{2,i}(t)(\sigma t)^{2/\lambda^\natural}\leq {\bf m}_{2}\, \sigma^{2/{\lambda^\natural}},\qquad t\in (0,1]\,.$$
Moreover, recall that $\alpha =\max \{1, \frac{2-s^\natural}{\lambda^\natural}\}\geq \frac{2-\bar{\bar{s}}_{i}}{\bar{\bar{\lambda}}_i}$ and $\kappa\geq1$, then
$$(\sigma t)^{2/\bar{\bar{\lambda}}_i-1}\sum_{n=1}^\infty \frac{n^{\frac{(2-\bar{\bar{s}}_{i})n}{\bar{\bar{\lambda}}_i}-\frac{2-\bar{\bar{s}}_{i}}{\bar{\bar{\lambda}}_i}+1}}{(n!)^{\alpha} \kappa^{(2n-2)/\bar{\bar{\lambda}}_i}}\leq \kappa^{2/\lambda^\natural}\sum_{n=1}^\infty \frac{n^{\alpha n+1}}{(n!)^{\alpha} \kappa^n}=:S\,,$$
where the series $S$ is convergent provided $\kappa>2e^{\alpha}$ \big(invoking the Stirling's formula $n!\sim \sqrt{2\pi n}(n/e)^n$\big). Therefore, for all $1\leq i\leq I$,
 $$H^i_{p}[f_i](t)\leq 3e^{\alpha}\max\{1,{\bf m}_{2}\}\,F^{i}_{p}[f_i](t)+3e^{\alpha}{\bf m}_{2}  + S,\qquad \sigma\in(0,1]\,,\quad t\in(0,1]\,,$$
which is the statement of the lemma for $K=3e^{\alpha}\max\{1,{\bf m}_{2}\}$ and $L=3e^{\alpha}{\bf m}_{2}  + S$.
\end{proof}
We can finally proceed with the proof of exponential moment generation.
\begin{proof}[Proof of the generation result of Theorem \ref{theo1}] We devise the proof in two steps.

\medskip
\noindent
{\bf Step 1.} We show first that for $\lambda^\natural>0$, $s^\natural>0$ and $\alpha=\max\big\{1,\frac{2-{s}^\natural}{\lambda^\natural}\big\}$, there exist $\sigma\in(0,1]$ and $T\in(0,1]$, depending only on the parameters of the model such that 
$$\sup_{t\in[0,T]} \sum_{n=0}^\infty \frac{(\sigma t)^{2n/\lambda^\natural}{\bf m}_{2n}(t)}{(n!)^\alpha}\leq 2\,{\bf m}_0\,.$$ 
Invoking Theorem \ref{Mom} it holds that for some constant $C_{p}\in(0,\infty)$,
$${\bf m}_{0}\leq E_{p}[\F](t)\leq {\bf m}_{0} +C_{p}\, t^{2/\lambda^\natural}\,,$$
then $\lim_{t\rightarrow 0} E_{p}[\F](t)={\bf m}_{0}$.

\medskip
\noindent
Now, using identity \eqref{add-moments} and lemmas \ref{Gen1} and \ref{Gen2}, we get that 
\begin{align*}
\frac{{\rm d}}{{\rm d}t} E_{p}&[\F](t)\leq \sum_{i,j=1}^I\Big[-C_{ij}^1\Big(F^{ij}_{p}[f_i](t)+F^{ji}_p[f_j](t)\Big)+C_{ij}^2G^{ij}_{p}[f_i, f_j](t)\Big]+\frac{2\sigma}{\lambda^\natural}\sum_{i=1}^I  H^{i}_{p}(t)\\
&\leq \sum_{i,j=1}^I\Big[-C_{ij}^1\Big(F^{ij}_{p}[f_i](t)+F^{ji}_p[f_j](t)\Big)+C_{ij}^2\Big( B_{\epsilon}\sigma^{2/\lambda^\natural}\big[ F^{ij}_p[f_i](t)+F^{ji}_{p}[f_j](t)\big]\\
&\qquad+ {\epsilon}\, \big[F^{ij}_p[f_i](t)E^{j}_{p}[f_j](t)+F^{ji}_{p}[f_j](t)E^{i}_{p}[f_i](t)\big]+D_{\epsilon}\Big)\Big]+\frac{2\sigma}{\lambda^\natural}\sum_{i=1}^I\Big(K F^{i}_{p}[f_i](t)+ L\Big)\\
&\leq \sum_{i,j=1}^I \Big[-C_{ij}^1\big[F^{ij}_{p}[f_i](t)+F^{ji}_p(t)[f_j](t)\big]+ R^{ij}_{{\epsilon}}\sigma^{2/\lambda^\natural}\big[ F^{ij}_p[f_i](t)+F^{ji}_{p}[f_j](t)\big]\\
&\qquad + \epsilon\, C^2_{ij} \Big(F^{ij}_p[f_i](t)+F^{ji}_{p}[f_j](t)\Big)\Big(E^{i}_{p}[f_i](t)+E^{j}_{p}[f_j](t)\Big)+\frac{2\sigma}{\lambda^\natural}K\,\sum_{i=1}^I F^{i}_{p}[f_i](t)+Q_{\epsilon}\\
&= I^1+I^2+Q_{\epsilon}\,,
\end{align*}
where $R^{ij}_{\epsilon}=C^2_{ij}\,B_{\epsilon}$, $Q_{\epsilon}=D_{\epsilon}\sum_{i,j=1}^IC^2_{ij}+\frac{2\sigma}{\lambda^\natural}IL$, and
\begin{align*}
I^1&:=\sum_{i,j=1}^I \Big[-C_{ij}^1\big[F^{ij}_{p}[f_i](t)+F^{ji}_p(t)[f_j](t)\big]+ R^{ij}_{{\epsilon}}\sigma^{2/\lambda^\natural}\big[ F^{ij}_p[f_i](t)+F^{ji}_{p}[f_j](t)\big]\\
&\hspace{4.5cm} + \epsilon\, C^2_{ij}\, \Big(F^{ij}_p[f_i](t)+F^{ji}_{p}[f_j](t)\Big)\Big(E^{i}_{p}[f_i](t)+E^{j}_{p}[f_j](t)\Big)\Big]\,,\\
I^2&:=\frac{2\sigma}{\lambda^\natural}K\sum_{i=1}^I F^{i}_{p}[f_i](t)\,.
\end{align*}
Choosing $$\epsilon:=\min_{1\leq i,j\leq I}\frac{C^1_{ij}}{8IC_{ij}^2{\bf m}_{0}}\,,\quad\text{and}\quad \sigma_0 := \min_{1\leq i,j\leq I}\Big(\frac{C^1_{ij}}{4R^{ij}_{\epsilon}}\Big)^{\lambda^\natural/2}\,,$$
we get for any $\sigma\leq\min\{1,\sigma_{0}\}$ 
\begin{align*}
I^1 &\leq -\frac{3\min_{1\leq i,j\leq I}C_{ij}^1}{4}\sum_{i,j=1}^I\Big(F^{ij}_p[f_i](t)+F^{ji}_{p}[f_j](t)\Big)\\
&\qquad +\frac{\min_{1\leq i,j\leq I}C_{ij}^1}{8I{\bf m}_{0}}\sum_{i,j=1}^I\Big(F^{ij}_p[f_i](t)+F^{ji}_{p}[f_j](t)\Big)\Big(E^{i}_{p}[f_i](t)+E^{j}_{p}[f_j](t)\Big).
\end{align*}
Similar, for $\sigma\leq \sigma_{1}:=\min_{1\leq i,j\leq I}\frac{C_{ij}^1\lambda^\natural}{8K}$, we obtain that
$$
I^2\leq \frac{\min_{1\leq i,j\leq I}C^1_{ij}}{4}\sum_{i=1}^IF^{i}_{p}[f_i](t)\leq \frac{\min_{1\leq i,j\leq I}C^1_{ij}}{4}\sum_{i,j=1}^IF^{ij}_{p}[f_i](t)\,,
$$
where the latter inequality is a direct consequence of the definitions of $F^{i}_{p}[f_i]$ and $F^{ij}_{p}[f_i]$.   Thus, overall we deduce that
\begin{align*}
\frac{{\rm d}}{{\rm d}t} & E_{p}[\F](t)\leq I^1+I^2+Q \leq -\frac{\min_{1\leq i,j\leq I}C_{ij}^1}{2}\sum_{i,j=1}^I\Big(F^{ij}_p[f_i](t)+F^{ji}_{p}[f_j](t)\Big)\\
&\qquad +\frac{\min_{1\leq i,j\leq I}C_{ij}^1}{8I{\bf m}_{0}}\sum_{i,j=1}^I\Big(F^{ij}_p[f_i](t)+F^{ji}_{p}[f_j](t)\Big)\Big(E^{i}_{p}[f_i](t)+E^{j}_{p}[f_j](t)\Big)+Q.
\end{align*}
Now, let us introduce the time
$$
T_p:=\sup\big\{0< t\leq 1\, \big| \,  E_p[\F](s)\leq 2\,{\bf m}_{0},\,  s\in (0,t]\big\}>0\,.
$$
This time is indeed positive by continuity of the polynomial moments (we pointed out previously that $\lim_{t\rightarrow 0} E_{p}[\F](t)={\bf m}_{0}$).  Thus, for all $t\in (0,T_p]$, we have that
\begin{align*}
\frac{{\rm d}}{{\rm d}t} E_{p}[\F](t) &\leq  -\frac{\min_{1\leq i,j\leq I}C_{ij}^1}{2}\sum_{i,j=1}^I\Big(F^{ij}_p[f_i](t)+F^{ji}_{p}[f_j](t)\Big)\\
&\hspace{-1cm}+\frac{\min_{1\leq i,j\leq I}C_{ij}^1}{8I{\bf m}_{0}}\sum_{i,j=1}^I\Big(F^{ij}_p[f_i](t)+F^{ji}_{p}[f_j](t)\Big)\sum_{i,j=1}^I\Big(E^{i}_{p}[f_i](t)+E^{j}_{p}[f_j](t)\Big)+Q\\
&\leq   -\frac{\min_{1\leq i,j\leq I}C_{ij}^1}{2}\sum_{i,j=1}^I\Big(F^{ij}_p[f_i](t)+F^{ji}_{p}[f_j](t)\Big) \\
&\hspace{+2cm}+\frac{\min_{1\leq i,j\leq I}C_{ij}^1}{2}\sum_{i,j=1}^I\Big(F^{ij}_p[f_i](t)+F^{ji}_{p}[f_j](t)\Big)+Q.\\
\end{align*}
Then, we comclude that for all $t\in (0,T_p]$ it holds that
$$\frac{{\rm d}}{{\rm d}t} E_{p}[\F](t)\leq Q\,,\qquad\text{and, thus}\qquad E_{p}[\F](t)\leq E_{p}[\F](0) + Q \,t.$$
Recalling that $E_p[\F](0)= {\bf m}_{0},$  we deduce directly from the definition of $T_p$ that
$$T_p\geq \min\{1, \frac{{\bf m}_{0}}{ Q}\}=:T\,,\quad\text{independent of}\;\; p\,.$$
Thus, for $\sigma \leq \min\{1,\sigma_0,\sigma_1\}$,
$$E[\F](t) = \lim_{p\rightarrow\infty}E_p[\F](t)\leq 2\,{\bf m}_0,\qquad t\in(0,T]\,.$$

\noindent
{\bf Step 2.} Let us proceed with the concluding argument.  To this end, fix the rate
$$
\rho=\min \Big\{\frac{2\lambda^\natural}{2-{s}^\natural},2\Big\}\,,
$$
and observe that $\alpha=\max\big\{1,\frac{2-{s}^\natural}{\lambda^\natural}\big\}=\frac{2}{\rho}$. 
Using the result of Step 1, we obtain that
$$
\frac{(\sigma t)^{2n/\lambda^\natural}\, {\bf m}_{2n}(\F)(t)}{(n!)^\alpha} \leq E[\F](t)=\sum_{n=0}^\infty \frac{(\sigma t)^{2n/\lambda^\natural}\, {\bf m}_{2n}(\F)(t)}{(n!)^\alpha}\leq 2{\bf m}_0\,,\quad t\in(0,T]\,.$$
Then, Lemma \ref{lemexp1} item (i) with $\sigma_0=(\sigma t)^{2/\lambda^\natural}$ and $K=2{\bf m}_0$ implies that
\begin{align*}
\sum_{i=1}^I\int_{\R^3}&\exp\Big[\frac{(\sigma t)^{\rho/\lambda^\natural}\langle v\rangle_i^\rho}{2}\Big]f_i(v)dv\\
&=\sum_{i=1}^I\int_{\R^3}\exp\Big[\frac{(\sigma t)^{2/{(\lambda^\natural\alpha)}}\langle v\rangle_i^{2/\alpha}}{2}\Big]f_i(v)dv \leq 2^{1+1/\alpha}{\bf m}_0 \leq 4{\bf m}_0,\qquad t\in(0,T]\,,
\end{align*}
 which is precisely the statement of the exponential moment generation.
 \end{proof}
\subsubsection{Propagation of exponential moment.} The argument to propagate exponential moments follows a similar path to that of propagation; we present the main steps for completeness.  Let us introduce for $\sigma\in (0,1]$, to be chosen sufficiently small, and $p\geq 2$,
$$
\mathcal{E}_p[\F](t):=\sum_{n=0}^p \frac{\sigma^{2n/{\lambda^\natural}}{\bf m}_{2n}[\F](t)}{ (n!)^{\alpha}},\qquad t\geq0\,.
$$
Analogously, for all $1\leq i,j\leq I$, we denote  
$$\mathcal{E}_{p}^{i}[f_i](t):=\sum_{n=0}^p \frac{\sigma^{2n/{\lambda^\natural}}{\bf m}_{2n,i}[f_i](t)}{ (n!)^{\alpha}},\qquad\mathcal{P}_{p}^{ij}[f_i](t):=\sum_{n=2}^p\frac{n^{s_{ij}/2}\sigma^{2n/\lambda^\natural}{\bf m}_{2n+\lambda_{ij},i}[f_i]}{(n!)^{\alpha}},$$
$$\text{and}\qquad \mathcal{G}_{p}^{ij}[f_i,f_j](t):=\sum_{n=2}^p \frac{\sigma^{2n/\lambda^\natural}\mathcal{S}_{n}^{ij}(t)}{(n!)^{\alpha}},\qquad t\geq0\,.$$
Note that the analogous to \eqref{add-moments} in this context is given by
\begin{align}\label{add-moments-prop}
\frac{{\rm d}}{{\rm d}t}\mathcal{E}_p[\F](t)\leq\sum_{i,j=1}^I\Big[ -C^1_{ij}\Big(\mathcal{P}_{p}^{ij}[f_i](t)+\mathcal{P}_{p}^{ji}[f_j](t)\Big)+C^2_{ij}\mathcal{G} _{p}^{ij}[f_i,f_j](t)\Big]. 
\end{align}
As before, we have the key estimate on the convolution structure of moments.
\begin{lem}[Moment Convolution]\label{Prop1}
Let $1\leq i,j\leq I$, $\lambda^\natural>0$, $s_{ij}\in(0,2)$, $\F_0\in U(D_0,E_0)$, and
$$\tilde{\sigma}_0=\sup_{n\geq 0} \frac{\sigma_{0}^n\, {\bf m}_{2n}(0)}{(n!)^\alpha}\leq 1,\qquad \text{for some}\;\; \sigma_0>0\,.$$
Given ${\epsilon}>0$ there are explicit constants $\mathcal{B}_{\epsilon}$ and $\mathcal{D}_{\epsilon}$, depending on the parameters of the model, $E_0$, $D_0$, and $\tilde{\sigma}_0$, such that for any $\sigma\in(0,1]$, $p\geq 2$, and $\alpha\geq 1$, the following estimate holds
\begin{align*}
\mathcal{G}^{ij}_{p}[f_i, f_j](t) & \leq \mathcal{B}_{\epsilon}\,\sigma^{2/\lambda^\natural}\Big[ \mathcal{P}^{ij}_p[f_i](t)+\mathcal{P}^{ji}_p[f_j](t)\Big]\\
&\quad + \epsilon\, \Big[\mathcal{P}^{ij}_p[f_i](t)\mathcal{E}^{j}_p[f_j](t)+\mathcal{P}^{ji}_p[f_j](t)\mathcal{E}^{i}_p[f_i](t)\Big]+\sigma^{2/\lambda^\natural}\mathcal{D}_{\epsilon},\qquad t\geq0\,.
\end{align*}
\end{lem}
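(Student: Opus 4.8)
The plan is to repeat, almost verbatim, the argument of Lemma \ref{Gen1}, replacing the growing weight $(\sigma t)^{2n/\lambda^\natural}$ by the time-frozen weight $\sigma^{2n/\lambda^\natural}$ and replacing each appeal to the \emph{generation} part of Theorem \ref{Mom} (which degenerates as $t\to 0^{+}$) by its \emph{propagation} part, which holds uniformly on $[0,\infty)$. The hypothesis $\tilde\sigma_0\le 1$ is used precisely to feed finite, quantitatively controlled initial moments into that propagation estimate: expanding the supremum gives ${\bf m}_{2n}(0)\le(n!)^\alpha\sigma_0^{-n}$ for every $n\ge 0$, so Theorem \ref{Mom} yields $\sup_{t\ge 0}{\bf m}_{2n}(t)\le\max\{(n!)^\alpha\sigma_0^{-n},C_2\}$ with $C_2=C_2(n)$ the constant of that theorem, and, interpolating ${\bf m}_{2+\lambda_{ij}}\le{\bf m}_2^{1-\lambda_{ij}/2}{\bf m}_4^{\lambda_{ij}/2}$ (together with ${\bf m}_{r,j}\le{\bf m}_r$), also $\sup_{t\ge 0}{\bf m}_{2+\lambda_{ij},j}(t)<\infty$; all of these bounds depend only on the model parameters, $D_0$, $E_0$ and $\tilde\sigma_0$.

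First I would insert the definition of $\mathcal{S}_{n}^{ij}(t)$ into $\mathcal{G}^{ij}_p$ and carry out the same combinatorial steps as in Lemma \ref{Gen1}: use $n^{s_{ij}/2}\le 2(n-a)^{s_{ij}/2}$ for $1\le a\le\lfloor n/2\rfloor$, extend the inner sum to $1\le a\le n-1$, bound the binomial coefficient by its $\alpha$-th power (admissible since $\alpha\ge 1$) and rewrite $\binom{n}{a}^\alpha/(n!)^\alpha=\big(a!(n-a)!\big)^{-\alpha}$. This displays $\mathcal{G}^{ij}_p\le\mathcal{G}^{ij}_{p,1}+\mathcal{G}^{ij}_{p,2}$ as two discrete convolutions which, after interchanging the order of summation, factor into products of a ``$\mathcal{P}$-type'' and an ``$\mathcal{E}$-type'' sum; concretely
\[
\mathcal{G}^{ij}_{p,1}[f_i,f_j](t)\le\Big(\mathcal{P}^{ji}_p[f_j](t)+\sigma^{2/\lambda^\natural}{\bf m}_{2+\lambda_{ij},j}(t)\Big)\,\mathcal{I}^{ij}_{p,1}[f_i](t),\qquad \mathcal{I}^{ij}_{p,1}[f_i](t):=2\sum_{a=1}^p\frac{\sigma^{2a/\lambda^\natural}{\bf m}_{2a,i}(t)}{a^{s_{ij}/2+1}(a!)^\alpha},
\]
and symmetrically for $\mathcal{G}^{ij}_{p,2}$ with $i$ and $j$ exchanged.

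Next I would split off the tail of $\mathcal{I}^{ij}_{p,1}$: with $N_\epsilon:=\lceil 2\epsilon^{-1}\rceil$ one has $2a^{-1}\le\epsilon$ for $a\ge N_\epsilon$, hence $\mathcal{I}^{ij}_{p,1}[f_i](t)\le\epsilon\,\mathcal{E}^i_p[f_i](t)+\mathcal{J}^i_\epsilon[f_i](t)$ with $\mathcal{J}^i_\epsilon[f_i](t):=2\sum_{a=1}^{N_\epsilon}\sigma^{2a/\lambda^\natural}{\bf m}_{2a,i}(t)/(a!)^\alpha$ a \emph{finite} sum. Since $\sigma\le 1$ gives $\sigma^{2a/\lambda^\natural}\le\sigma^{2/\lambda^\natural}$ for $a\ge 1$, the time-uniform moment bounds of the first paragraph produce $\mathcal{J}^i_\epsilon[f_i](t)\le\sigma^{2/\lambda^\natural}\mathcal{A}_\epsilon$ and $\sigma^{2/\lambda^\natural}{\bf m}_{2+\lambda_{ij},j}(t)\le\sigma^{2/\lambda^\natural}\mathcal{C}$ for all $t\ge 0$. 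Finally, using $\mathcal{E}^i_p[f_i](t)\le{\bf m}_{0,i}+\sigma^{2/\lambda^\natural}{\bf m}_{2,i}+\mathcal{P}^{ij}_p[f_i](t)$ (valid because $\langle v\rangle_i\ge 1$ and $n^{s_{ij}/2}\ge 1$) to absorb the ``$\mathcal{E}$'' factors multiplying the $\sigma^{2/\lambda^\natural}$ terms, adding $\mathcal{G}^{ij}_{p,1}$ and $\mathcal{G}^{ij}_{p,2}$ and collecting like terms yields the stated inequality, with $\mathcal{B}_\epsilon:=2\mathcal{A}_\epsilon+2\epsilon\,\mathcal{C}$ and $\mathcal{D}_\epsilon$ an explicit combination of $\mathcal{A}_\epsilon$, $\mathcal{C}$, $\epsilon$, ${\bf m}_0$ and ${\bf m}_2$ (possibly carrying further powers of $\sigma$, exactly as the constant $D_\epsilon$ of Lemma \ref{Gen1}), now also depending on $\tilde\sigma_0$.

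The body of the computation is routine once Lemma \ref{Gen1} is available; the only genuinely new, and therefore delicate, point is the uniform-in-time control of the finitely many low-order moments ${\bf m}_{2a,i}(t)$ with $a\le N_\epsilon$ and of ${\bf m}_{2+\lambda_{ij},j}(t)$, purely in terms of $\tilde\sigma_0$ and the data. This is what forces $\mathcal{B}_\epsilon$ and $\mathcal{D}_\epsilon$ to be independent of $t$ and of the truncation $p$, which is exactly what the subsequent continuation-in-time argument for propagation of the exponential moment requires.
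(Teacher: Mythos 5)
Your proposal is correct and follows essentially the same route as the paper: a copycat of Lemma \ref{Gen1} with the weight $\sigma^{2n/\lambda^\natural}$ in place of $(\sigma t)^{2n/\lambda^\natural}$, with the finitely many low-order moments (the tail sum $\mathcal{J}^i_\epsilon$ and ${\bf m}_{2+\lambda_{ij},j}$) now controlled uniformly in time by the \emph{propagation} part of Theorem \ref{Mom}, which applies precisely because $\tilde\sigma_0\le 1$ gives ${\bf m}_{2n}(0)\le (n!)^\alpha\sigma_0^{-n}$. The only cosmetic difference is that you justify the uniform bound on ${\bf m}_{2+\lambda_{ij},j}$ by interpolation against ${\bf m}_2$ and ${\bf m}_4$, a step the paper leaves implicit.
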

\begin{proof}
Using a copycat argument in the proof of Lemma \ref{Gen1}, we have that
\begin{align*}
\mathcal{G}_{p}^{ij}[f_i,f_j](t)&\leq \Big(\mathcal{P}^{ji}_{p}[f_j](t)+\sigma^{2/\lambda^\natural}{\bf m}_{2+\lambda_{ij},j}(t)\Big)\Big(\epsilon\, \mathcal{E}^{i}_{p}[f_i](t)+ \mathcal{J}_{\epsilon}^{i}[f_i](t)\Big)\\
&\qquad+\Big(\mathcal{P}^{ij}_{p}[f_i](t)+\sigma^{2/\lambda^\natural}{\bf m}_{2+\lambda_{ij},i}(t)\Big)\Big(\epsilon \,\mathcal{E}^{j}_{p}[f_j](t)+ \mathcal{J}_{{\epsilon}}^{j}[f_j](t)\Big), 
\end{align*}
where,
setting $\mathcal{N}_{\epsilon}=\lceil 2\,{\epsilon}^{-1}\rceil,$ 
$$\mathcal{J}_{\epsilon}^{i}[f_i](t):=2\sum_{a=1}^{\mathcal{N}_{\epsilon}}\frac{\sigma^{2a/\lambda^\natural}{\bf m}_{2a,i}[f_i](t)}{(a!)^{\alpha}}\leq \mathcal{A}_{\epsilon}\,\sigma^{2/\lambda^\natural}\,.$$
For the later inequality in the right we have used the propagation of polynomial moments given in Theorem \ref{Mom}, which applies since the initial datum satisfies ${\bf m}_{2n}(0)\leq \frac{(n!)^\alpha}{\sigma^n_0}$, for $n\geq1$.  Then, we can simply take the constant
$$
\mathcal{A}_{\epsilon} := \mathcal{A}_{\epsilon}(\tilde{\sigma}_0) = 2\,\sum_{a=1}^{\infty}\frac{\sigma^{2a/\lambda^\natural}}{(a!)^{\alpha}}\,\sup_{t\geq0}{\bf m}_{2\mathcal{N}_\epsilon}[\F](t)<\infty\,.
$$
Thus,
\begin{align*}
\mathcal{G}_{p}^{ij}[f_i,f_j](t)&\leq \Big(\mathcal{P}^{ji}_{p}[f_j](t)+2\,C\,\sigma^{2/\lambda^\natural}\Big)\Big( \epsilon\, \mathcal{E}_{p}^{i}[f_i](t)+\mathcal{A}_{\epsilon}\,\sigma^{2/\lambda^\natural}\Big) \\
&\qquad+\Big(\mathcal{P}^{ij}_{p}[f_i](t)+2\,C\,\sigma^{2/\lambda^\natural}\Big)\Big( \epsilon\, \mathcal{E}^{j}_{p}[f_j](t)+\mathcal{A}_{\epsilon}\,\sigma^{2/\lambda^\natural}\Big)\,.
\end{align*}
And, since
\begin{align*}
\mathcal{E}^{i}_{p}[f_i](t)+\mathcal{E}^{j}_p[f_j](t)&\leq {\bf m}_{0}+\sigma{\bf m}_2+\mathcal{P}^{ij}_{p}[f_i](t)+\mathcal{P}^{ji}_p[f_j](t)\,,
\end{align*}
we conclude that
\begin{align*}
\mathcal{G}^{ij}_{p}[f_i, f_j](t)&\leq \mathcal{B}_{\epsilon}\sigma^{2/\lambda^\natural}\Big[ \mathcal{P}^{ij}_p[f_i](t)+\mathcal{P}^{ji}_p[f_j](t)\Big]\\
&\qquad + \epsilon \Big[\mathcal{P}^{ij}_p[f_i](t)\mathcal{E}^{j}_p[f_j](t)+\mathcal{P}^{ji}_{p}[f_j](t)\mathcal{E}^{i}_p[f_i](t)\Big]+\sigma^{2/\lambda^\natural}\mathcal{D}_{\epsilon},
\end{align*}
where 
$$\mathcal{B}_{\epsilon}= 2 \mathcal{A}_{\epsilon}+2\,\epsilon\,C \qquad\text{and}\qquad \mathcal{D}_{\epsilon}=4\sigma^{2/\lambda^\natural}\mathcal{A}_{\epsilon}C+2\,\epsilon\,C\,\big({\bf m}_{0}+\sigma {\bf m}_2\big)\,,$$
which is the statement of the result.
\end{proof}
 \begin{lem}\label{Prop2} 
Let $1\leq i,j\leq I$, $\lambda^\natural>0$, $s_{ij}\in(0,2)$ and $p>2$. For any  $\sigma\in(0,1]$, $\alpha\geq 1$ it holds that
 $$\mathcal{P}_{p}^{ij}[f_i](t)+\mathcal{P}_p^{ji}[f_j](t)\geq \frac{1}{\sigma^{\lambda^\natural/2}}\Big[\mathcal{E}^{i}_p[f_i](t)+\mathcal{E}^{j}_p[f_j](t) - 2 \, e \max\big\{{\bf m}_0[f_i], {\bf m}_0[f_j]\big\} \Big],\qquad t\geq0\,.$$
\end{lem}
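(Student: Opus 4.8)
The statement is symmetric under exchanging the pair $(f_i,\lambda_{ij},s_{ij})$ with $(f_j,\lambda_{ji}=\lambda_{ij},s_{ji}=s_{ij})$, so the plan is to establish the single–species estimate
$$\mathcal{P}^{ij}_p[f_i](t)\ \ge\ \sigma^{-\lambda^\natural/2}\Big(\mathcal{E}^i_p[f_i](t)-e\,{\bf m}_0[f_i]\Big)$$
(up to the two lowest–order terms $n=0,1$ of $\mathcal{E}^i_p$, which are bounded by the conserved mass and energy and absorbed in the constant), and then add it to the same estimate with $i$ and $j$ interchanged, using $e\,{\bf m}_0[f_i]+e\,{\bf m}_0[f_j]\le 2e\max\{{\bf m}_0[f_i],{\bf m}_0[f_j]\}$. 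The single–species estimate I would obtain by a termwise dichotomy on the size of ${\bf m}_{2n,i}(t)$, in the spirit of the proof of Lemma \ref{Gen2}.

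Fix $n\ge 2$ and set the threshold $\tau_n:=\sigma^{-n\lambda^\natural/\lambda_{ij}}\,{\bf m}_0[f_i]$. If ${\bf m}_{2n,i}(t)\ge\tau_n$, then Hölder's inequality with exponents $\tfrac{2n+\lambda_{ij}}{2n}$ and $\tfrac{2n+\lambda_{ij}}{\lambda_{ij}}$ gives ${\bf m}_{2n,i}\le {\bf m}_{2n+\lambda_{ij},i}^{\,2n/(2n+\lambda_{ij})}\,{\bf m}_0[f_i]^{\,\lambda_{ij}/(2n+\lambda_{ij})}$, hence
$${\bf m}_{2n+\lambda_{ij},i}\ \ge\ {\bf m}_{2n,i}\Big(\tfrac{{\bf m}_{2n,i}}{{\bf m}_0[f_i]}\Big)^{\lambda_{ij}/2n}\ \ge\ \sigma^{-\lambda^\natural/2}\,{\bf m}_{2n,i},$$
so that, using also $n^{s_{ij}/2}\ge1$, the $n$-th term of $\mathcal{P}^{ij}_p[f_i]$ dominates $\sigma^{-\lambda^\natural/2}$ times the $n$-th term of $\mathcal{E}^i_p[f_i]$. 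If instead ${\bf m}_{2n,i}(t)<\tau_n$, the $n$-th term of $\mathcal{E}^i_p[f_i]$ is at most $\sigma^{2n/\lambda^\natural}\tau_n/(n!)^\alpha={\bf m}_0[f_i]\,\big(\sigma^{2/\lambda^\natural-\lambda^\natural/\lambda_{ij}}\big)^n/(n!)^\alpha$; since $2\lambda_{ij}\ge(\lambda^\natural)^2$ (which holds whenever $\lambda_{ij}\ge\lambda^\natural$, because $\lambda^\natural\le2$) the base is $\le 1$, and summing over all such indices together with $n=0,1$ gives at most ${\bf m}_0[f_i]\sum_{n\ge0}1/(n!)^\alpha\le e\,{\bf m}_0[f_i]$, using $\alpha\ge1$.

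Summing the two cases over $2\le n\le p$ yields $\mathcal{P}^{ij}_p[f_i]\ge\sigma^{-\lambda^\natural/2}\big(\mathcal{E}^i_p[f_i]-e\,{\bf m}_0[f_i]\big)$, and adding the twin estimate finishes the proof. The genuinely delicate point is the gain of the factor $\sigma^{-\lambda^\natural/2}\ge1$: a naive termwise comparison based only on $\langle v\rangle_i^{\lambda_{ij}}\ge1$ and $n^{s_{ij}/2}\ge1$ gives merely $\mathcal{P}^{ij}_p[f_i]\ge\mathcal{E}^i_p[f_i]-(\text{low order})$, which is too weak; it is the reverse–Hölder step in the ``large'' regime that produces the gain, at the price of the threshold $\tau_n$, and it is exactly the requirement $2\lambda_{ij}\ge(\lambda^\natural)^2$ that keeps the ``small'' remainder summable and pins down the exponent $\lambda^\natural/2$ in the statement. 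For the pairs with $\lambda_{ij}$ much smaller than $\lambda^\natural$ one instead has to exploit the polynomial weight $n^{s_{ij}/2}$ in the ``large'' regime — where $n$ is necessarily large, by Stirling's formula, exactly as in Lemma \ref{Gen2} — or, more simply, to note that the cumulative inequality over $j$ used in the propagation argument only requires the estimate for the index $j$ realising $\overline{\overline{\lambda}}_i$, for which $\lambda_{ij}\ge\lambda^\natural$. The remaining steps are routine bookkeeping of the absolute constants.
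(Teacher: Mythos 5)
Your route is genuinely different from the paper's, and its core mechanism is sound where it applies. The paper's proof is a one--line pointwise trick: after dropping $n^{s_{ij}/2}\geq 1$ and lowering the weight $\langle v\rangle_i^{\lambda_{ij}}$ to $\langle v\rangle_i^{\lambda^\natural}$, it rewrites the $n$-th term of $\mathcal{P}^{ij}_p[f_i]$ as $\sigma^{-\lambda^\natural/2}\int_{\R^3}(\sigma\langle v\rangle_i^2)^{n+\lambda^\natural/2}f_i\,dv$ and applies the elementary inequality $x^{n+\lambda^\natural/2}\geq x^{n}-1$ (valid for all $x\geq0$) under the integral sign; the deficit $-1$ integrates to $-{\bf m}_0[f_i]$ per term and the tail $\sum_n (n!)^{-\alpha}$ produces the constant $e\max\{{\bf m}_0[f_i],{\bf m}_0[f_j]\}$. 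Your dichotomy plus reverse H\"older reaches the same two outputs --- the gain $\sigma^{-\lambda^\natural/2}$ on the ``large'' terms and an additive $O({\bf m}_0)$ remainder from the ``small'' ones --- by arguing at the level of moments rather than pointwise in $v$, at the price of the threshold $\tau_n$ and the summability condition $2\lambda_{ij}\geq(\lambda^\natural)^2$. Notably, your condition is \emph{weaker} than the one the paper uses silently: the weight-lowering step in the paper requires $\lambda_{ij}\geq\lambda^\natural$, which (since $\lambda^\natural\leq2$) implies $2\lambda_{ij}\geq(\lambda^\natural)^2$ but not conversely. So you correctly identified a restriction that the paper's proof glosses over, and your mechanism even covers more pairs $(i,j)$.

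Two caveats. First, of your two sketched remedies for pairs with $\lambda_{ij}$ small relative to $\lambda^\natural$, the one based on the weight $n^{s_{ij}/2}$ cannot deliver the stated $\sigma$-uniform constant: when $2\lambda_{ij}<(\lambda^\natural)^2$ the small-regime remainder carries factors $\sigma^{-cn}$ with $c=\lambda^\natural/\lambda_{ij}-2/\lambda^\natural>0$, and $\sum_n \sigma^{-cn}(n!)^{-\alpha}$, while finite for fixed $\sigma$, blows up as $\sigma\to0$ and is not bounded by $e\,{\bf m}_0[f_i]$; the polynomial factor $n^{s_{ij}/2}$ only shifts the threshold by a $\sigma$-independent amount and cannot compensate a power of $\sigma^{-1}$. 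Your second remedy --- that the propagation argument only needs the estimate for the index $j$ realising $\overline{\overline{\lambda}}_i$, for which indeed $\lambda_{ij}=\overline{\overline{\lambda}}_i\geq\lambda^\natural$ --- is the correct observation and salvages the downstream application, but it establishes a weaker statement than the lemma as written for arbitrary $(i,j)$. Second, your absorption of the $n=1$ term of $\mathcal{E}^i_p$, namely $\sigma^{2/\lambda^\natural}{\bf m}_{2,i}$, into $e\,{\bf m}_0[f_i]$ tacitly requires the energy to be controlled by the mass (or $\sigma$ small); the paper's proof makes exactly the same silent step, so this is not a defect relative to the reference.
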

\begin{proof}
Using that $x^{b}\geq x^a-1$ for any $b\geq a\geq0$ we can write 
\begin{align*}
\mathcal{P}_{p}^{ij}[f_i](t)+\mathcal{P}_p^{ji}[f_j](t)&=\sum_{n=2}^p \frac{n^{s_{ij}/2}\sigma^n }{(n!)^\alpha}\Big({\bf m}_{2n+\lambda_{ij},i}[f_i](t)+{\bf m}_{2n+\lambda_{ij},j}[f_j](t)\Big)\\
&\geq \frac{1}{\sigma^{\lambda^\natural/2}}\sum_{n=2}^p \bigg[\int_{\R^3}\frac{(\sigma \langle v\rangle_i^2)^{n+{\lambda}^\natural/2} f_i(t,v)dv}{(n!)^{\alpha}}+\int_{\R^3}\frac{(\sigma \langle v\rangle_j^2)^{n+{\lambda}^\natural/2} f_j(t,v)dv}{(n!)^{\alpha}}\bigg]\\
&\geq \frac{1}{\sigma^{{\lambda}^\natural/2}}\sum_{n=2}^p\bigg[\int_{\R^3}\frac{\big[(\sigma \langle v\rangle_i^2)^{n}-1\big] f_i(t,v)dv}{(n!)^{\alpha}}+\int_{\R^3}\frac{\big[(\sigma \langle v\rangle_j^2)^{n}-1\big] f_j(t,v)dv}{(n!)^{\alpha}}\bigg] \\
 &\geq  \frac{1}{\sigma^{{\lambda}^\natural/2}}\sum_{n=2}^p\frac{\sigma^{n}}{(n!)^{\alpha}} \Big({\bf m}_{2n}[f_i](t)-{\bf m}_0[f_i]+{\bf m}_{2n}[f_j](t)-{\bf m}_0[f_j]\Big)\,.
 \end{align*}
 Then, since $\sigma\in(0,1]$ and ${\alpha}\geq 1$, it holds that 
\begin{align*} 
\mathcal{P}_{p}^{ij}[f_i](t)+\mathcal{P}_{p}^{ji}[f_j](t)&\geq \frac{1}{\sigma^{\lambda^\natural/2}}\bigg[\mathcal{E}^{i}_p[f_i](t)+\mathcal{E}_{p}^{j}[f_j](t)-{\bf m}_0[f_i](t)-\sigma{\bf m}_2[f_i](t)\\
 &\quad -{\bf m}_0[f_j](t)-\sigma{\bf m}_2[f_j](t)-{\bf m}_0[f_i](t)\sum_{n=2}^p \frac{1}{(n!)^{\alpha}}-{\bf m}_0[f_j](t)\sum_{n=2}^p \frac{1}{(n!)^{\alpha}}\bigg]\\
&\geq \frac{1}{\sigma^{\lambda^\natural/2}}\Big[\mathcal{E}^{i}_p[f_i](t)+\mathcal{E}^{j}_p[f_j](t) - 2 \, e \max\big\{{\bf m}_0[f_i],{\bf m}_0[f_j]\big\}\Big].
\end{align*}
\end{proof}
Let us proceed with the proof the propagation of exponential moments.
\begin{proof}[Proof of the propagation result of Theorem \ref{theo1}]  As before, we present two steps.

\medskip
\noindent
{\bf Step 1.} First we prove that for any $\sigma_{0}>0$ and $\alpha\geq 1$ there exists $\sigma>0$ (sufficiently small) depending only on the parameters of the model, $D_0$, $E_0$, $\sigma_0$ and $\alpha$ such that  
$$\text{if}\quad\sup_{n\geq 0} \frac{\sigma_{0}^n\, {\bf m}_{2n}(0)}{(n!)^\alpha}\leq 1\;\Longrightarrow\; \sup_{t\geq 0} \sum_{n=0}^\infty\frac{\sigma^{2n/\lambda^\natural} \,{\bf m}_{2n}(t)}{(n!)^\alpha}\leq 3\,I\,e\,\big( {\bf m}_0+1 \big).$$
We fix $\alpha\geq 1$, $\sigma_{0}>0$, and assume that $\sup_{n\geq 0}\frac{\sigma_{0}^n \,{\bf m}_{2n}(0)}{(n!)^{\alpha}}\leq 1$.
Note that if $\sigma\in (0, \frac{\sigma_{0}}{2}]$ then $\mathcal{E}_p[\F](0)\leq 1+{\bf m}_{0}$, for $p\geq 2$. Indeed, 
$$
\mathcal{E}_{p}[\F](0)\leq {\bf m}_{0} + \sum_{n=1}^p\frac{(\sigma_0/2)^n\, {\bf m}_{2n}(0)}{(n!)^{\alpha}}\leq {\bf m}_{0}+\sum_{n\geq 1}2^{-n}={\bf m}_{0}+1\,.
$$
Using identity \eqref{add-moments-prop} and lemmas \ref{Prop1} and \ref{Prop2}, for any $\sigma\in(0,1]$ and $p\geq2$, it follows that
\begin{align*}
\frac{{\rm d}}{{\rm d}t}\mathcal{E}_p[\F](t)\leq \sum_{i,j=1}^I&\bigg[-C_{ij}^1 \Big(\mathcal{P}_{p}^{ij}[f_i](t)+\mathcal{P}_{p}^{ji}[f_j](t)\Big)+C^2_{ij}\,\mathcal{B}_{\epsilon}\,\sigma^{2/\lambda^\natural}\,\Big[ \mathcal{P}^{ij}_p[f_i](t)+\mathcal{P}^{ji}_p[f_j](t)\Big]\\
&\quad + \epsilon \, C^2_{ij}\,\Big[\mathcal{E}^{i}_p[f_i](t)\mathcal{P}^{ji}_p[f_j](t)+\mathcal{E}^{j}_p[f_j](t)\mathcal{P}^{ij}_p[f_i](t)\Big]+\mathcal{D}_{\epsilon}\bigg]\,,\qquad t>0\,.
\end{align*}
Choosing
$$
\epsilon=\min_{1\leq i,j\leq I}\frac{C_{ij}^1}{12\,I\,e\,C^2_{ij}\,({\bf m}_{0}+1)} \qquad\text{and} \qquad \sigma_{1} = \min_{1\leq i,j\leq I}\Big[\frac{C^1_{ij}}{2 \,C^2_{ij} \,\mathcal{B}_{\epsilon}}\Big]^{\lambda^\natural/2},$$
we conclude that for any choice of $\sigma\in(0,\sigma_{1}]$ 
\begin{align*}
\frac{{\rm d}}{{\rm d}t}\mathcal{E}_p[\F](t) &\leq -\frac{\min_{1\leq i,j\leq I}C_{ij}^1}{2} \sum_{i,j=1}^I\Big(\mathcal{P}_{p}^{ij}[f_i](t)+\mathcal{P}_{p}^{ji}[f_j](t)\Big)\\
&\quad +\frac{\min_{1\leq i,j\leq I}C_{ij}^1}{12\,I\,e\,({\bf m}_{0}+1)}\sum_{i,j=1}^I\Big(\mathcal{P}^{ji}_p[f_j](t)+\mathcal{P}^{ij}_p[f_i](t)\Big)\mathcal{E}_p[\F](t)+I^{2}\mathcal{D},\qquad t>0\,.
\end{align*}
Recall that for $\sigma\in(0,\sigma_{0}/2]$ we know that  $\mathcal{E}_p[\F](0)\leq {\bf m}_{0}+1$.  Then, by continuity of the polynomial moments, the time
$$
T_p :=\sup \big\{ t>0\,\big|\, \mathcal{E}_p[\F](s)\leq 3\,I\,e\,({\bf m}_{0}+1)\,,\, s\in(0, t]\big \}
$$
is positive.  Thus, invoking Lemma \ref{Prop2}, and provided $\sigma\leq  \min\{ \frac{\sigma_{0}}{2},\, \sigma_{1}\}$, it holds that  
\begin{align*}
&\frac{{\rm d}}{{\rm d}t}{\mathcal{E}}_p[\F](t)\leq -\frac{\min_{1\leq i,j\leq I}C_{ij}^1}{4}\sum_{i,j=1}^I\Big(\mathcal{P}_{p}^{ij}[f_i](t)+\mathcal{P}_{p}^{ji}[f_j](t)\Big)+
I^{2}\mathcal{D}\\
&\leq -\frac{\min_{1\leq i,j\leq I}C_{ij}^1}{ 4\sigma^{\lambda^\natural/2}}\sum_{i,j=1}^I\Big[\mathcal{E}^{i}_p[f_i](t)+\mathcal{E}^{j}_p[f_j](t) -2\,e\,\max\big\{{\bf m}_0[f_i],{\bf m}_0[f_j]\big\} \Big]+I^{2}\mathcal{D}\,, \quad t\in(0,T_p]\,,
\end{align*}
and consequently,
\begin{align*}
\frac{{\rm d}}{{\rm d}t}\mathcal{E}_p[\F](t) \leq -\frac{\min_{1\leq i,j\leq I}C_{ij}^1}{ 2\sigma^{\lambda^\natural/2}}\Big[\mathcal{E}_p[\F](t)-2\,I\, e\,{\bf m}_0 \Big]+ I^{2}\mathcal{D}\,,\qquad t\in(0,T_p].
\end{align*}
Then,  
$$
\mathcal{E}_p[\F](t)\leq 2\,I \,e\,{\bf m}_0 +\frac{2 \sigma^{\lambda^\natural/2}\,I^2\,\mathcal{D} }{\min_{1\leq i,j\leq I}C^1_{ij}}\,,\qquad t\in [0,T_p]\,.
$$
Choosing $\sigma\leq \min\{ \frac{\sigma_{0}}{2},\, \sigma_{1}\}$ sufficiently small so that
 $$ \frac{2 \sigma^{\lambda^\natural/2}\,I^{2}\mathcal{D} }{\min_{1\leq i,j\leq I}C^1_{ij}}\leq I \,e\,{\bf m}_0\,,\qquad\text{that is}\qquad \sigma\leq \bigg(\frac{e\,{\bf m}_0\,\min_{1\leq i,j\leq I}C^1_{ij}}{2\,I\,\mathcal{D}}\bigg)^{2/\lambda^\natural}=:\sigma_2\,,$$ 
we conclude that, for all $p\geq2$, it holds that
$$\mathcal{E}_{p}[\F](t)\leq 3\,I\,e\,{\bf m}_{0}\, \qquad t\in [0,T_p]\,.$$
Then, by continuity of the polynomial moments and the definition of $T_{p}$, we must have  $T_p=\infty$ for all $p\geq2$.  We conclude sending $p\rightarrow\infty$ that, for $\sigma\leq\min\{1,\sigma_0/2,\sigma_1,\sigma_2\}$,
$$
\sum_{n=0}^\infty \frac{\sigma^{2n/{\lambda^\natural}}{\bf m}_{2n}[\F](t)}{ (n!)^{\alpha}}\leq 3\,I\,e\,\big( {\bf m}_{0}+1\big)\,, \qquad t\geq 0\,.
$$
{\bf Step 2.} Let us conclude.  Fix $\rho\in(0,2]$, $\sigma_{0}\in(0,1]$, and $A>1$.   Assume that  $$\sum^{I}_{i=1}\displaystyle\int_{\R^3}\exp\big(\sigma_{0} \langle v\rangle_i^{\rho}\big)\,f_i(0,v)dv\leq A\,,$$
and set $\alpha=\frac{2}{\rho}\geq 1$.  Using Lemma \ref{lemexp1} item (ii) it follows that for some $\tilde\sigma:=\tilde\sigma(\rho, \sigma_{0}, A)$
$$
\sup_{n\geq 0}\frac{\tilde\sigma^n {\bf m}_{2n,i}  (0) }{(n!)^{2/\rho}}\leq \frac{1}{I}\,,\qquad\text{and then},\qquad \sup_{n\geq 0}\frac{\tilde\sigma^n {\bf m}_{2n}  (0) }{(n!)^{2/\rho}}\leq 1\,.$$
Thus, we apply Step 1 to conclude that there exists $\sigma\in(0,\tilde\sigma/2]$ (computed is Step 1) such that
$$
\sup_{n\geq0}\frac{\sigma^{2n/\lambda^\natural} {\bf m}_{2n}(t)}{(n!)^{2/\rho}}\leq\sum_{n=0}^\infty\frac{\sigma^{2n/\lambda^\natural} {\bf m}_{2n}(t)}{(n!)^{2/\rho}}\leq 3\,I\,e\,\big( {\bf m}_{0}+1\big)\,,\qquad t\geq0\,.
$$
We deduce from Lemma \ref{lemexp1} item (i) (applied with $\sigma_0= \sigma^{2/\lambda^\natural}$) that, for $t\geq0$, 
\begin{align*}
\sum_{i=1}^I\int_{\R^3}\exp\Big( \frac{\sigma^{\rho/\lambda^\natural} \langle v\rangle_i^\rho}{2}\Big)f_i(t,v)dv&\leq 2\,{\bf m}_0^{1-\rho/2}\Big(3\,I\,e\,({\bf m}_0+1)\Big)^{\rho/2} \leq 6\,I\,e\,({\bf m}_0+1)\,,
\end{align*}
which is the statement of the theorem.
\end{proof}
\section{Lebesgue integrability generation and propagation}
After some technical generalization of a coercivity estimate for the collision operator originated in \cite{ADVW} for Maxwell Molecules and refined in \cite{AMUXY} for other potentials, it is possible to implement an energy estimate method, see \cite{DM,RA}, to prove generation and propagation of higher Lebesgue integrability.
\subsection{A uniform coercive estimate}
Recall that the dynamic of the gas mixture satisfies $\F(t)\in U(D_0,E_0)$.  Let us make an important observation related to such space $U(D_0,E_0)$.  Set $B(R)= \{v \in \mathbb{R}^3 \,|\, |v| \leq R\}$ and $B_{v_0}(R,r)= \{v\in B(R) \,|\, |v-v_0|\geq
r\}$ for $R>0$, $r>0$, and $v_0\in \mathbb{R}^3$.  It follows from the compactness of $U(D_0,E_0)$ in $L^{1}$ that there
exist positive constants $\tilde R$ and $\tilde r$ depending only on $D_0$ and $E_0$ (independent of $v_0$) such that
\begin{equation}\label{U0}
\G\in U(D_0,E_0)\Rightarrow\; \chi_{B_{v_0}(\tilde R,\tilde r)}\G\in U(D_0/2,E_0)\,,\qquad \forall\; v_0\in\mathbb{R}^{3}\,,
\end{equation}
where $\chi_A$ denotes a characteristic function of the set $A\subset \mathbb{R}^3$.  Of course one can take $\tilde{R}>1>\tilde{r}$ with no loss of generality.

\begin{prop}\label{Coercive}
Fix $1\leq i\leq I$ and assume that $ s_{ij}\in (0,2)$ and $\lambda_{ij}\in(0,2]$ for all $1\leq j\leq I$.  Assume also that $D_0,\,E_0 > 0$, and $\G=[g_j]_{1\leq j \leq I} \in U(D_0, E_0)$.  Then there exist positive constants $c_{i}$, $C_{i}$ depending only on the parameters of the model, $D_0$ and $E_0$, such that
$$-\sum_{j=1}^I\big(Q_{ij}(f,g_j), f\big)_{L^2}\geq c_{i}\big\|\langle v \rangle_i^{\frac{\overline{\lambda}_{i}}{2}}f\big\|^2_{H^{\frac{\overline{\overline{s}}_{i}}{2}}}- C_{i}\|\langle \cdot \rangle^{\frac{\bar{\bar{\lambda}}_{i}}{2}}_i\,f\|^2_{L^2}\,.$$
%with
%\begin{equation}c_{i}^*= \Big[\min\{c_0,\frac{c_0}{m_{i}^{\overline{\overline{\lambda}}_{i}/2}}\}+ (1+16m_i)^{-\overline{\overline{\lambda}}_{i}} \frac{r_0}{2}^{\overline{\lambda}_{i}}R^{-\overline{\overline{\lambda}}_{i}} \Big] \min_{1\leq j\leq I}\{K^{ij}(s_{ij},D_0,E_0)\} 
%\end{equation} 
%and 
%\begin{equation}
%\begin{split}
%C_{i}^*&=\max_{1\leq j\leq I}\Big\{\frac{m_im_j}{(m_i+m_j)^2}\|b_{ij}\|_{L^1}\Big\} E_0\Big[\min\Big\{1,\frac{1}{m_{i}^{\overline{\overline{\lambda}}_{i}/2}}\Big\}\Big(\max_{1\leq j\leq I}\frac{m_i}{m_j}\Big)^{\frac{\overline{\overline{\lambda}}_{i}}{2}}\\
%&+(1+16m_i)^{-\overline{\overline{\lambda}}_{i}} \frac{r_0}{2}^{\overline{\lambda}_{i}}R^{-\overline{\overline{\lambda}}_{i}} \max_{1\leq j\leq I}\Big\{(1+\frac{m_j}{\sum_{l'=1}^Im_{l'}}R^2)^\frac{\lambda_{ij}}{2}+ \Big(\frac{\sum m_{l'}}{\sqrt{m_im_j}}\Big)^{\lambda_{ij}}\Big\}\Big]
%\end{split}
%\end{equation}
%
%
\end{prop}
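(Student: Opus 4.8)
The plan is to test the single collision operator $Q_{ij}(f,g_j)$ against $f$ in $L^2$, using the weak form that precedes \eqref{Q4} together with the elementary identity $f(v)\big(f(v')-f(v)\big)=\tfrac12\big(f(v')^2-f(v)^2\big)-\tfrac12\big(f(v')-f(v)\big)^2$, which produces the splitting
\[
-\big(Q_{ij}(f,g_j),f\big)_{L^2}=\tfrac12\,\mathcal{D}_{ij}(g_j,f)-\tfrac12\,T_{ij}(g_j,f),
\]
where $\mathcal{D}_{ij}(g_j,f):=\int\int\int_{\R^3\times\R^3\times\S^2} g_j(v_*)\big(f(v')-f(v)\big)^2 B_{ij}\,d\sigma\,dv_*\,dv\ge 0$ is a Dirichlet--type form and $T_{ij}(g_j,f):=\int\int\int g_j(v_*)\big(f(v')^2-f(v)^2\big) B_{ij}\,d\sigma\,dv_*\,dv$ is a cancellation term. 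After summing over $j$, the whole estimate reduces to a lower bound for $\sum_j \mathcal{D}_{ij}(g_j,f)$ by the weighted $H^{\overline{\overline{s}}_i/2}$ norm and an upper bound for $\sum_j |T_{ij}(g_j,f)|$ by the weighted $L^2$ norm on the right--hand side.

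For the cancellation term I would freeze $v_*,\sigma$ and change variables $v\mapsto v'=v-\tfrac{m_j}{m_i+m_j}\big(u-|u|\sigma\big)$; its Jacobian equals $(1-\beta)^2\big(1-\beta(1-\cos\theta)\big)$ with $\beta=\tfrac{m_j}{m_i+m_j}\in(0,1)$, which is bounded above and below on the effective support $\theta\in(0,\pi/2]$. Since the angular singularity of $b_{ij}$ cancels between the two copies of $f^2$ --- this is the cancellation lemma of \cite{ADVW} adapted to the mixture geometry --- one obtains $T_{ij}(g_j,f)=\int_{\R^3}\int_{\R^3} g_j(v_*)\,f(v)^2\,S_{ij}(|v-v_*|)\,dv_*\,dv$ with $|S_{ij}(r)|\le C_{ij}\,\|\theta^2 b_{ij}\|_{L^1(\S^2)}\,r^{\lambda_{ij}}$. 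Combining $|v-v_*|^{\lambda_{ij}}\lesssim\langle v\rangle_i^{\lambda_{ij}}+\langle v_*\rangle_j^{\lambda_{ij}}$, $\lambda_{ij}\le\overline{\overline{\lambda}}_i\le 2$ and the moment bounds implied by $\G\in U(D_0,E_0)$ then yields $\sum_j|T_{ij}(g_j,f)|\lesssim \|\langle v\rangle_i^{\overline{\overline{\lambda}}_i/2}f\|_{L^2}^2$, which is of the desired lower--order type.

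The heart of the matter is the lower bound for $\mathcal{D}_{ij}(g_j,f)$. Here I would prove --- or invoke as a preliminary lemma --- the mixture analogue of the non--cutoff coercivity estimate of \cite{ADVW,AMUXY}: for every $g_j\in U(D_0,E_0)$ there are constants $c_{ij},C_{ij}>0$, depending only on the parameters of the model, $D_0$ and $E_0$, with
\[
\mathcal{D}_{ij}(g_j,f)\ \ge\ c_{ij}\,\big\|\langle v\rangle_i^{\lambda_{ij}/2}f\big\|_{H^{s_{ij}/2}}^2-C_{ij}\,\big\|\langle v\rangle_i^{\lambda_{ij}/2}f\big\|_{L^2}^2 .
\]
The uniform non--concentration property \eqref{U0} of $U(D_0,E_0)$ is precisely what makes these constants independent of time along $\F(t)\in U(D_0,E_0)$. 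Then I would discard every nonnegative term $\mathcal{D}_{ij}$ except the one indexed by $j_0$ realising $s_{ij_0}=\overline{\overline{s}}_i$, note that $\lambda_{ij_0}\ge\overline{\lambda}_i$, and downgrade the weight from $\lambda_{ij_0}/2$ to $\overline{\lambda}_i/2$ by a commutator estimate of the form $\|\langle v\rangle_i^{a}h\|_{H^{\varsigma}}^2\ge \tfrac14\|h\|_{H^{\varsigma}}^2-C\|\langle v\rangle_i^{a}h\|_{L^2}^2$, valid for $a\ge 0$ and $\varsigma\in(0,1)$, applied with $h=\langle v\rangle_i^{\overline{\lambda}_i/2}f$ and $\varsigma=\overline{\overline{s}}_i/2$; the residual $L^2$ term is again $\lesssim\|\langle v\rangle_i^{\overline{\overline{\lambda}}_i/2}f\|_{L^2}^2$. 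Assembling the three estimates gives the proposition.

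The step I expect to be the main obstacle is the coercivity lemma itself. The arguments of \cite{ADVW,AMUXY} lean on the Bobylev/Fourier representation of the collision operator and on the symmetrisation $\sigma\leftrightarrow-\sigma$, neither of which transfers verbatim since the mixture collision laws \eqref{collision-laws} are asymmetric and scaled by the mass ratio $\tfrac{m_j}{m_i+m_j}$. One therefore has to rerun that argument for the rescaled geometry --- where the displacement $v'-v$ has size $\sim\tfrac{m_j}{m_i+m_j}|u|\,\theta$, exactly the order that generates the $H^{s_{ij}/2}$ gain --- tracking the dependence of all constants on the masses, and using \eqref{U0} to guarantee that for each $v$ in a large ball $g_j$ still carries a definite amount of mass at a macroscopic distance from $v$.
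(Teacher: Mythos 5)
Your proposal is correct and follows essentially the same route as the paper: the pointwise identity splitting $-(Q_{ij}(f,g_j),f)_{L^2}=\tfrac12\,\mathcal{D}_{ij}-\tfrac12\,T_{ij}$, the cancellation lemma with the mixture Jacobian for the term $T_{ij}$, and an ADVW-type coercivity lower bound for the Dirichlet form. The one step you defer to a "preliminary lemma" --- coercivity with the kinetic factor $|u|^{\lambda_{ij}}$ and constants depending only on $(D_0,E_0)$ --- is precisely what the paper's Step 1 carries out: it localises with the cutoff $\phi_{\tilde R}$ on the region $|v|\gg|v_*|$ and with a finite covering of $B(4\tilde R)$ exploiting \eqref{U0}, commutes the weight $\langle v\rangle_i^{\lambda_{ij}/2}$ into the square difference at the price of a second-order remainder controlled by $\|\theta^2 b_{ij}\|_{L^1}$, and then applies the Fourier-side coercivity of Lemma \ref{Kij}.
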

\begin{proof}
Compute with a suitable $f$
\begin{align*}
\sum_{j=1}^I\big(Q_{ij}(f,g_j), f\big)_{L^2}
&= \sum_{j=1}^I\int_{\mathbb{R}^3} Q_{ij}(f,g_j) \,f \, dv\\
&= \sum_{j=1}^I \int_{\mathbb{R}^3\times \mathbb{R}^3\times \mathbb{S}^2} g_j(v_*)f(v)\big( f(v')-f(v) \big) B_{ij}(v,v_*,\sigma)d\sigma dv_* dv\\
 &= \sum_{j=1}^I \int_{\mathbb{R}^3\times \mathbb{R}^3\times \mathbb{S}^2} g_j(v_*)f(v)\big( f(v')-f(v) \big) |v-v_*|^{\lambda_{ij}}b_{ij}(\widehat{u},\sigma)d\sigma dv_* dv\,.
 \end{align*}
Using the identity
 $$f(v) \big( f(v')-f(v) \big)=-\frac{1}{2}\big(f(v') - f(v)\big)^2+\frac{1}{2}\big(f(v')^2-f(v)^2\big)$$
we get that
 \begin{align*}
 \sum_{j=1}^I\big(Q_{ij}(f,g_j), f\big)_{L^2}&= -\frac{1}{2}\sum_{j=1}^I  \int_{\mathbb{R}^3\times \mathbb{R}^3\times \mathbb{S}^2}|v-v_*|^{\lambda_{ij}} g_j(v_*)\big(f(v') - f(v)\big)^2b_{ij}(\widehat{u},\sigma)d\sigma dv_* dv\\
 &\qquad+\frac{1}{2} \sum_{j=1}^I \int_{\mathbb{R}^3\times \mathbb{R}^3\times \mathbb{S}^2}|v-v_*|^{\lambda_{ij}} g_j(v_*)\big(f(v')^2 - f(v)^2\big)b_{ij}(\widehat{u},\sigma)d\sigma dv_* dv\\
 &= -\frac{1}{2}I_{i,1}(g_j,f)+\frac{1}{2}I_{i,2}(g_j,f)
\end{align*}
where the terms are given by
$$I_{i,1}(g_j,f):= \sum_{j=1}^I \int_{\mathbb{R}^3\times \mathbb{R}^3\times \mathbb{S}^2}|v-v_*|^{\lambda_{ij}} g_j(v_*)\big(f(v')-f(v)\big)^2b_{ij}(\widehat{u},\sigma)d\sigma dv_* dv\,,$$
and 
$$I_{i,2}(g_j,f):= \sum_{j=1}^I \int_{\mathbb{R}^3\times \mathbb{R}^3\times \mathbb{S}^2}|v-v_*|^{\lambda_{ij}} g_j(v_*)\big(f(v')^2-f(v)^2\big)b_{ij}(\widehat{u},\sigma)d\sigma dv_* dv\,.$$
{\bf Step 1.} This step is based on the scalar case given in \cite{AMUXY}.  Let us estimate $I_1(g_j,f)$ in a region of high velocities $|v_*| \leq \tilde R \ll |v|$.  Choose $\tilde R$ and $\tilde r$ such that (\ref{U0}) holds. Let $\phi_{\tilde R}$ be a non-negative smooth function not greater than one, which is $1$ for $\{ |v|\geq 4\tilde R \}$ and $0$ for $\{ |v|\leq 2\tilde R\}$. In view of 
$$\frac{\langle v\rangle_i}{4\sqrt{m_i}}\leq |v-v_*|\leq \frac{2\,\langle v\rangle_i}{\sqrt{m_i}}\,,\quad \text{in the support of}\quad\chi_{B(\tilde R)}(v_*)\,\phi_{\tilde R}(v)$$
we have that
\begin{align*}
4^{\lambda_{ij}}\,&|v-v_*|^{\lambda_{ij}}\,g_j(v_*)\,\big( f(v')-f(v) \big)^2\\
&\geq \min_{1\leq j\leq I} m_{i}^{-\frac{\lambda_{ij}}{2}} \; \big(g_j\chi_{B(\tilde R)}\big)(v_*)\big(\langle v\rangle_i^\frac{\lambda_{ij}}{2}\,\phi_{\tilde R}(v) \big)^2\, \big(f(v')-f(v)\big)^2\\
& \geq \min_{1\leq j\leq I} m_{i}^{-\frac{\lambda_{ij}}{2}} \;\big( g_j\chi_{B(\tilde R)} \big)(v_*)\bigg[\,\frac{1}{2}\,\Big( \big( \langle \cdot\rangle_i^\frac{\lambda_{ij}}{2}\phi_{\tilde R} \, f \big)(v') -  \big(\langle \cdot\rangle_i^\frac{\lambda_{ij}}{2}\phi_{\tilde R}\, f \big )(v)\Big)^2\\
&\hspace{5.5cm} - \Big( \big( \langle \cdot\rangle_i^\frac{\lambda_{ij}}{2}\phi_{\tilde R} \big)(v')- \big(\langle \cdot\rangle_i^\frac{\lambda_{ij}}{2}\phi_{\tilde R} \big)(v)\Big)^2\,f(v')^2\bigg]\,.
\end{align*}
Observe that 
\begin{align}\label{comparisons}
\begin{split}
|v' - v| &\leq 2 |v - v_*|\sin\frac{\theta}{2}\,,\qquad \text{and}\\
\frac{|v - v_*|}{\sqrt2}\leq |v' - v_*| &\leq 2|v-v_*+\tau(v' - v)| \leq 6|v-v_*|\quad \text{for}\quad \cos\theta\geq0\,,
\end{split}
\end{align}
where the latter follow after using that $u^+ \cdot u^- =0$ and $|u|^{2}=|u^+|^2 + |u^-|^{2}$ in the definition of $v'$.  Using the mean value theorem there exists $\tau \in (0,1)$ such that the inequalities hold
\begin{align*}
\Big|\big( \langle \cdot\rangle_i^\frac{\lambda_{ij}}{2}\phi_{\tilde R} \big)(v') - \big( \langle \cdot\rangle_i^\frac{\lambda_{ij}}{2}&\phi_{\tilde R} \big)(v)\Big|\lesssim \langle v+\tau (v'-v)\rangle^{\frac{\lambda_{ij}}{2}-1}_i|v-v_*|\sin\frac{\theta}{2}\\
&\lesssim \frac{1}{\sqrt{m_i}} \langle v_*\rangle^{1-\frac{\lambda_{ij}}{2}}_i\langle v'-v_*\rangle^{\frac{\lambda_{ij}}{2}}_i\sin\frac{\theta}{2}\leq \frac{1}{\sqrt{m_i}} \langle v_*\rangle_i\langle v'\rangle^{\frac{\lambda_{ij}}{2}}_i\sin\frac{\theta}{2}.
\end{align*}
In the second inequality we used the uniform equivalences in \eqref{comparisons}.  Since $$\langle v_*\rangle_i\leq \Big(\max_{1\leq j\leq I}\sqrt{\frac{ m_i}{ m_j}}\Big)\,\langle v
_*\rangle _j\,$$
it implies that
$$\Big| \big( \langle \cdot\rangle_i^\frac{\lambda_{ij}}{2}\phi_{\tilde R} \big)(v') - \big(\langle \cdot\rangle_i^\frac{\lambda_{ij}}{2}\phi_{\tilde R} \big)(v)\Big|\lesssim \Big(\max_{1\leq j\leq I}m^{-\frac12}_{j}\Big) \langle v_*\rangle_j\langle v'\rangle^{\frac{\lambda_{ij}}{2}}_i\sin\frac{\theta}{2}.$$
One concludes that 
\begin{align*}
I_{i,1}&(g_j,f) \\
&\geq\min_{1\leq j\leq I} m_{i}^{-\frac{\lambda_{ij}}{2}}\sum_{j=1}^I\int_{\mathbb{R}^3\times \mathbb{R}^3\times 
[0,\frac{\pi}{2}]}\big(g_j\chi_{B(\tilde R)}\big)(v_*)\bigg[\frac{1}{2}\Big( \big( \langle \cdot\rangle_i^\frac{\lambda_{ij}}{2}\phi_{\tilde R} f \big)(v') - \big( \langle \cdot\rangle_i^\frac{\lambda_{ij}}{2}\phi_{\tilde R} f \big)(v)\Big)^2\\
&\qquad\qquad -\bigg| \Big(\max_{1\leq j\leq I}m^{-\frac12}_j\Big)\langle v_*\rangle_j\langle v'\rangle^{\frac{\lambda_{ij}}{2}}_i\,\sin\frac{\theta}{2}\,f(v')\bigg|^2\bigg]b_{ij}(\cos\theta)d\theta dv_*dv\,.
\end{align*}
The second term in the right side, due to the second order cancellation, is controlled by the expression
\begin{equation*}
C\min_{1\leq j\leq I} m_{i}^{-\frac{\lambda_{ij}}{2}} \; \max_{1\leq j\leq I}m^{-1}_j \; \max_{1\leq j\leq I}\Big\{\|\theta^2\,b_{ij}\|_{L^1}\Big\}\,E_0\,\|\langle\cdot\rangle^{\frac{\bar{\bar{\lambda}}_{i}}{2}}_i\,f\|^2_{L^2}\,.
\end{equation*}
As for the first term, it holds, thanks to Lemma \ref{Kij}, 
\begin{align*}
\frac{1}{2}&\min_{1\leq j\leq I} m_{i}^{-\frac{\lambda_{ij}}{2}}\sum_{j=1}^I \int_{\mathbb{R}^3\times \mathbb{R}^3\times 
[0, \frac{\pi}{2}]}\big(g_j\chi_{B(\tilde R)}\big)(v_*) \\
&\hspace{3cm}\times\Big( \big(\langle \cdot\rangle_i^\frac{\lambda_{ij}}{2}\phi_{\tilde R}\, f \big)(v') - \big( \langle \cdot\rangle_i^\frac{\lambda_{ij}}{2}\phi_{\tilde R}\, f \big)(v)\Big)^2b_{ij}(\cos\theta)d\theta dv_*dv\\
&\geq c_0\min_{1\leq j\leq I} m_{i}^{-\frac{\lambda_{ij}}{2}}\min_{1\leq j\leq I}K^{ij}(s,D_0,E_0)\int_{|\xi|\geq 1}\Big|\,|\xi|^{\overline{\overline{s}}_{i}}\mathcal{F}\big(\langle v\rangle_i^\frac{\overline{\lambda}_{i}}{2}\phi_{\tilde R}\, f\, \big)(\xi)\,\Big|^2d\xi\\
&\geq c_0\min_{1\leq j\leq I} m_{i}^{-\frac{\lambda_{ij}}{2}}\min_{1\leq j\leq I}K^{ij}(s,D_0,E_0) \big\|\langle v\rangle_i^\frac{\overline{\lambda}_{i}}{2}\,\phi_{\tilde R}\,f\big\|^2_{H^{\frac{\overline{\overline{s}}_{i}}{2}}}\,,
\end{align*}
where $c_0=\frac{1}{32\pi^3}$.   Thus, overall
\begin{multline}\label{I1-1}
I_{i,1}(g_j, f)\geq c_0\min_{1\leq j\leq I} m_{i}^{-\frac{\lambda_{ij}}{2}}\min_{1\leq j\leq I}K^{ij}(s,D_0,E_0)\,\big\|\langle v\rangle_i^\frac{\overline{\lambda}_{i}}{2}\,\phi_{\tilde R}\,f\big\|^2_{H^{\frac{\overline{\overline{s}}_{i}}{2}}}\\
 - C\min_{1\leq j\leq I} m_{i}^{-\frac{\lambda_{ij}}{2}} \; \max_{1\leq j\leq I}m^{-1}_j \; \max_{1\leq j\leq I}\Big\{\|\theta^2\,b_{ij}\|_{L^1}\Big\}\,E_0\,\|\langle\cdot\rangle^{\frac{\bar{\bar{\lambda}}_{i}}{2}}_i\,f\|^2_{L^2}\,.
\end{multline}
\noindent
Now we estimate $I_1$ in a region where $v_*$ and $v$ have comparable sizes, yet, separated.  For the set $B(4\tilde{R})$ we take the finite covering 
$$B(4\tilde{R})\subset \bigcup_{v_k\in B(4\tilde R)}A_k,\qquad A_k=\Big\{v\in \mathbb{R}^3;\; |v-v_k|\leq \frac{\tilde r}{4}\Big\}.$$ 
For each $A_k$ choose a non-negative smooth function $\phi_{A_k}$ which is $1$ on $A_k$ and $0$ on $\big\{|v-v_k|\geq \frac{\tilde r}{2} \big\}$.  Note that 
$$\frac{\tilde r}{2}\leq |v-v_*|\leq 6\tilde R \quad \text{on the support of} \quad \chi_{B_{v_k}(\tilde R, \tilde r)}(v_*)\phi_{A_k}(v)\,.$$
Then, recalling that $m_i<1$ and $\tilde R\geq1$,
\begin{align*}
&|v-v_*|^{\lambda_{ij}}g_j(v_*)\big( f(v') - f(v)\big)^2\gtrsim  \tilde r^{\lambda_{ij}}\big( g_j\,\chi_{B_{v_k}(\tilde R,\tilde r)} \big)(v_*)\phi^2_{A_k}\big( f(v') - f(v) \big)^2\\
&\quad\gtrsim \bigg(\min_{1\leq j \leq I}\frac{\tilde r^{\lambda_{ij}}}{\tilde R^{\lambda_{ij}}}\bigg) \big( g_j\,\chi_{B_{v_k}(\tilde R,\tilde r)}\big)(v_*)\bigg[\frac{1}{2}\Big(\big( \langle \cdot\rangle_i^\frac{\lambda_{ij}}{2}\phi_{A_k} f \big)(v') - \big( \langle \cdot\rangle_i^\frac{\lambda_{ij}}{2}\phi_{A_k} f \big)(v)\Big)^2\\
&\hspace{5.5cm} -\Big( \big( \langle \cdot\rangle_i^\frac{\lambda_{ij}}{2}\phi_{A_k} \big)(v')- \big(\langle \cdot\rangle_i^\frac{\lambda_{ij}}{2}\phi_{A_k}\big)(v) \Big)^2f(v')^2\bigg]\,.
\end{align*}
Again, using the mean value theorem it holds for $|v_*|\leq \tilde{R}$ that
$$
\Big|\big(\langle \cdot\rangle_i^\frac{\lambda_{ij}}{2}\phi_{\tilde R} \big)(v') - \big(\langle \cdot\rangle_i^\frac{\lambda_{ij}}{2}\phi_{\tilde R}\big)(v)\Big| \lesssim \frac{\tilde R}{\sqrt{m_i}}\langle v'\rangle_i\sin\frac{\theta}{2}\,.
$$
Consequently, we obtain in a similar fashion as before that
\begin{align}
\begin{split}\label{I1-2}
I_{i,1}(g_j,f)\gtrsim  \min_{1\leq j\leq I}\frac{\tilde r^{\lambda}}{\tilde R^{\lambda_{ij}}}\bigg[\min_{1\leq j\leq I}&K^{ij}(s_{ij},D_0,E_0)\,\big\|\langle v\rangle_i^\frac{\overline{\lambda}_{i}}{2}\phi_{A_k} f\big\|^2_{H^{\frac{\overline{\overline{s}}_{i}}{2}}} \\
&-C\Big( \max_{1\leq j\leq I} \tilde{R}^{2\lambda_{ij}}\|\theta^2b_{ij}\|_{L^1}\Big)\,E_0\,\|\langle \cdot\rangle^{\frac{\bar{\bar{\lambda}}_{i}}{2}}_i\, f\|^2_{L^2}\bigg]\,.
\end{split}
\end{align}
Now observe that $\phi^{2}_{\tilde{R}} + \sum^{N}_{k=1}\phi^{2}_{A_k}\geq 1$ where $N\sim(\tilde R/\tilde r)^3$ is the number of sets in the covering.  Consequently,
\begin{align}\label{coercive-principal-part}
(1+N)I_1(g_j,f) \geq I(g_j,f) + N\,I_1(g_j,f) &\geq  \tilde{c}_i\big\|\langle v\rangle_i^\frac{\overline{\lambda}_{i}}{2}\phi_{\tilde{R}} f\big\|^2_{H^{\frac{\overline{\overline{s}}_{i}}{2}}} - \tilde C_{i}\|\langle\cdot\rangle^{\frac{\bar{\bar{\lambda}}_{i}}{2}}_i f\|^2_{L^2} \nonumber \\ 
&\qquad +c'_i\sum^{N}_{k=1}\big\|\langle v\rangle_i^\frac{\overline{\lambda}_{i}}{2}\phi_{A_k} f\big\|^2_{H^{\frac{\overline{\overline{s}}_{i}}{2}}} - N\,C'_{i}\|\langle\cdot\rangle^{\frac{\bar{\bar{\lambda}}_{i}}{2}}_i f\|^2_{L^2}\\
& \geq\min\{\tilde c_i,c'_i\}\big\|\langle v\rangle_i^\frac{\overline{\lambda}_{i}}{2} f\big\|^2_{H^{\frac{\overline{\overline{s}}_{i}}{2}}} - \big(\tilde C_{i} + N\,C'_i\big)\|\langle\cdot\rangle^{\frac{\bar{\bar{\lambda}}_{i}}{2}}_i f\|^2_{L^2}\,,\nonumber
\end{align}
where we used \eqref{I1-1} for the first term and \eqref{I1-2} for the $N$ remaining.

\smallskip
\noindent
{\bf Step 2.} Let us estimate $I_2(g_j,f)$.  To this purpose use the Cancellation Lemma \ref{Canc1}, specifically Remark \ref{cancelation-remark}, that gives
\begin{align*}
I_2(g_j,f)&= \sum_{j=1}^I \int_{\mathbb{R}^3\times \mathbb{R}^3\times \S^2}|v-v_*|^{\lambda_{ij}} g_j(v_*)\big(f(v')^2-f(v)^2\big)b_{ij}(\widehat{u},\sigma)d\sigma dv_* dv\\
&\leq |\S| \sum_{j=1}^I \int_{\mathbb{R}^3\times \mathbb{R}^3}\int_0^\frac{\pi}{2}|v-v_*|^{\lambda_{ij}} g_j(v_*)f(v)^2\\
&\hspace{3cm}\times \Big[\frac{1}{\beta(\cos\theta)^{3+\lambda_{ij}}}-1\Big]b_{ij}(\cos\theta)\sin\theta\,d\theta dv_* dv\\
& = \sum_{j=1}^I\int_{\mathbb{R}^3\times \mathbb{R}^3}\int_0^\frac{\pi}{2}|v-v_*|^{\lambda_{ij}} g_j(v_*)f^2_i(v)b_{i}(\cos\theta)d\theta dv_* dv
\end{align*}
where we defined 
$$b_i(\cos\theta):= |\S| \Big[\frac{1}{\beta(\cos\theta)^{3+\lambda_{ij}}}-1\Big] \sin\theta\,b_{ij}(\cos\theta)\in L^{1}\big([0,\tfrac\pi2]\big)\,.$$
In addition, recall that 
$$|v-v_*|^{\lambda_{ij}}\leq  (m_im_j)^{-\frac{\lambda_{ij}}{2}}\,\langle v\rangle_i^{\lambda_{ij}}\langle v_*\rangle_j^{\lambda_{ij}},$$
therefore, since $\lambda_{ij}\leq 2$,
\begin{align*}
I_2(g_j,f)&\leq \max_{1\leq j\leq I}(m_im_j)^{-\frac{\lambda_{ij}}{2}}\,\|b_{i}\|_{L^1}\sum_{j=1}^I\|\langle \cdot \rangle_j^{\lambda_{ij}}g_j\|_{L^1}\|\langle \cdot \rangle_i^{\frac{\overline{\overline{\lambda}}_{i}}{2}}f_i\|^2_{L^2}\\
&\leq \max_{1\leq j\leq I}(m_im_j)^{-\frac{\lambda_{ij}}{2}}\|b_{i}\|_{L^1}\,E_0\,\|\langle \cdot \rangle_i^{\frac{\overline{\overline{\lambda}}_{i}}{2}}f_i\|^2_{L^2}=:\tilde C'_{i}\,\|\langle \cdot \rangle_i^{\frac{\overline{\overline{\lambda}}_{i}}{2}}f_i\|^2_{L^2}.
\end{align*}
This proves the proposition with $c_{i}:=\frac{\min\{\tilde c_i,c'_i\}}{N+1}$ and $C_i:=\frac{\tilde C_{i}+N\,C'_{i}}{N+1} + \tilde C'_i$.
\end{proof}
The following corollary of Proposition \ref{Coercive} extends the $L^{2}$ frame to $L^{p}$.
\begin{cor}\label{LemLp}
Assume that $\G=[g_j]_{1\leq j \leq I}\in U(D_0,E_0)$, $\lambda_{ij}\in(0,2]$ and $s_{ij}\in(0,2)$.  Then, for any $p\in(1,\infty)$ it holds that
$$\sum_{j=1}^I\int_{\mathbb{R}^3}Q_{ij}(f,g_j)(v)f(v)^{p-1}dv\leq -c_i\big\|\langle v\rangle^{\frac{\overline{\lambda}_{i}}{2}}_i f^{\frac p2}\big\|^2_{H^{\frac{\overline{\overline{s}}_{i}}{2}}}+C_{i}\big\|\langle \cdot\rangle_i^{\frac{\overline{\overline{\lambda}}_{i}}{p}}f\big\|^p_{L^p}$$
for positive constants $c_i$ and $C_i$ depending on the parameters of the model, $D_0$, $E_0$, and $p$.
\end{cor}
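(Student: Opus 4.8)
The plan is to deduce the $L^p$ bound from the $L^2$ coercivity of Proposition \ref{Coercive}, applied with $f^{p/2}$ in place of $f$, after an algebraic rearrangement of the weak form driven by a convexity inequality. First I test $Q_{ij}(f,g_j)$ against $\psi=f^{p-1}$ in the (one‑sided) weak form, which is legitimate for the suitable $f$ at hand, obtaining
\[
\sum_{j=1}^I\int_{\R^3}Q_{ij}(f,g_j)f^{p-1}\,dv=\sum_{j=1}^I\int_{\R^3\times\R^3\times\S^2} g_j(v_*)f(v)\big(f(v')^{p-1}-f(v)^{p-1}\big)|v-v_*|^{\lambda_{ij}}b_{ij}(\widehat{u}\cdot\sigma)\,d\sigma\,dv_*\,dv.
\]
The key pointwise ingredient is that for every $p\in(1,\infty)$ there is $c_p\in(0,1]$ (with $c_2=1$, where it is an identity) such that
\[
b\big(a^{p-1}-b^{p-1}\big)\le\frac1p\big(a^p-b^p\big)-\frac{c_p}{p}\big(a^{p/2}-b^{p/2}\big)^2,\qquad a,b\ge0,
\]
which I would verify by homogeneity, reducing to $b=1$ and analysing one variable. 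Applying it with $a=f(v')$, $b=f(v)$ and recalling the notation $I_{i,1}(g_j,F)$, $I_{i,2}(g_j,F)$ from the proof of Proposition \ref{Coercive} gives
\[
\sum_{j=1}^I\int_{\R^3}Q_{ij}(f,g_j)f^{p-1}\,dv\le\frac1p I_{i,2}(g_j,f^{p/2})-\frac{c_p}{p}I_{i,1}(g_j,f^{p/2}).
\]

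Next I recognise the collision structure on the right: since $\sum_j(Q_{ij}(F,g_j),F)_{L^2}=-\tfrac12 I_{i,1}(g_j,F)+\tfrac12 I_{i,2}(g_j,F)$, the right-hand side equals $\tfrac{2c_p}{p}\sum_j(Q_{ij}(f^{p/2},g_j),f^{p/2})_{L^2}+\tfrac{1-c_p}{p}I_{i,2}(g_j,f^{p/2})$, with $1-c_p\ge0$. To the first term I apply Proposition \ref{Coercive} directly, with $f^{p/2}$ playing the role of ``$f$''; to the second I use the bound $I_{i,2}(g_j,f^{p/2})\le \tilde C_i'\,\|\langle\cdot\rangle_i^{\bar{\bar\lambda}_i/2}f^{p/2}\|^2_{L^2}$ obtained via the Cancellation Lemma \ref{Canc1} (in the form of Remark \ref{cancelation-remark}) exactly as in Step 2 of that proof, using $|v-v_*|^{\lambda_{ij}}\le(m_im_j)^{-\lambda_{ij}/2}\langle v\rangle_i^{\lambda_{ij}}\langle v_*\rangle_j^{\lambda_{ij}}$ together with $\|\langle\cdot\rangle_j^{\lambda_{ij}}g_j\|_{L^1}\le E_0$. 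Collecting the two estimates and invoking the elementary identity $\|\langle\cdot\rangle_i^{\bar{\bar\lambda}_i/2}f^{p/2}\|^2_{L^2}=\|\langle\cdot\rangle_i^{\bar{\bar\lambda}_i/p}f\|^p_{L^p}$ yields the claimed inequality after an obvious relabelling of constants, with the $H^{\bar{\bar s}_i/2}$ coercive term coming from the $-\tfrac{2c_p}{p}c_i$ contribution.

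The main obstacle is establishing the convexity inequality with a genuine quadratic defect $-c_p(a^{p/2}-b^{p/2})^2$, uniformly over $p\in(1,\infty)$: for $p\ge2$ this is an elementary computation, but in the sub‑quadratic range $p\in(1,2)$ one must analyse the single‑variable function $t\mapsto t^{p-1}-1-\tfrac1p(t^p-1)+\tfrac{c_p}{p}(t^{p/2}-1)^2$ on $[0,\infty)$ and choose $c_p$ small enough. A secondary, minor point is to confirm that $f^{p/2}$ is an admissible input for Proposition \ref{Coercive}, i.e.\ that it lies in the relevant weighted $L^2$ and $H^{\bar{\bar s}_i/2}$ spaces; this is part of what it means for $f$ to be ``suitable'', namely a sufficiently regular, decaying, non‑negative function belonging to $L^p$ with enough polynomial weight.
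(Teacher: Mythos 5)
Your overall strategy coincides with the paper's: test against $f^{p-1}$, use a pointwise convexity inequality to split the weak form into a ``cancellation'' part (Lemma \ref{Canc1}) and a negative quadratic defect handled by the coercivity of Proposition \ref{Coercive} applied to $f^{p/2}$. However, the pointwise inequality you build everything on is false as stated. Setting $b=1$, $a=t$, your claim reads
\[
t^{p-1}-1\le \tfrac1p\,(t^p-1)-\tfrac{c_p}{p}\,(t^{p/2}-1)^2 .
\]
Both sides vanish at $t=1$, but the first-order terms do not match: the left side has slope $p-1$ there, while the right side has slope $1$ (the squared term contributes nothing to first order). Hence for $p>2$ the inequality fails for $t$ slightly above $1$, and for $p\in(1,2)$ it fails for $t$ slightly below $1$, even with $c_p=0$: for instance $p=3$, $t=1.1$ gives $0.21$ on the left versus at most $\tfrac13(1.1^3-1)\approx 0.11$ on the right. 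So the claim is not ``elementary for $p\ge2$'' --- it is simply wrong there. The correct coefficient of $(a^p-b^p)$ is $\tfrac{1}{p'}=\tfrac{p-1}{p}$, which matches the first-order terms; the paper's version is
\[
f(v)\big[f(v')^{p-1}-f(v)^{p-1}\big]\le \tfrac{1}{p'}\big[f(v')^p-f(v)^p\big]-\tfrac{1}{\max\{p,p'\}}\big[f(v')^{p/2}-f(v)^{p/2}\big]^2 .
\]

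With that correction the remainder of your argument is sound and essentially reproduces the paper's proof. The $\tfrac1{p'}\big[f(v')^p-f(v)^p\big]$ contribution is controlled exactly as you describe via the Cancellation Lemma and the bound $|v-v_*|^{\lambda_{ij}}\le(m_im_j)^{-\lambda_{ij}/2}\langle v\rangle_i^{\lambda_{ij}}\langle v_*\rangle_j^{\lambda_{ij}}$, yielding $C'_i\|\langle\cdot\rangle_i^{\bar{\bar{\lambda}}_i/p}f\|_{L^p}^p$; the quadratic defect, now with weight $1/\max\{p,p'\}$, is bounded below by the coercive estimate \eqref{coercive-principal-part} applied to $f^{p/2}$. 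The paper applies that estimate directly to $I_{i,1}(g_j,f^{p/2})$ rather than reassembling the full inner product $\sum_j(Q_{ij}(f^{p/2},g_j),f^{p/2})_{L^2}$ as you do, but since Proposition \ref{Coercive} is proved precisely by combining those two bounds, the two routes are equivalent. Your identity $\|\langle\cdot\rangle_i^{\bar{\bar{\lambda}}_i/2}f^{p/2}\|_{L^2}^2=\|\langle\cdot\rangle_i^{\bar{\bar{\lambda}}_i/p}f\|_{L^p}^p$ and the remark on the admissibility of $f^{p/2}$ (part of what ``suitable $f$'' means in these a priori estimates) are both fine.
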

\begin{proof}
This arguments is based on \cite[Lemma 1]{RA}.  Using the weak formulation, it holds for any fixed $1\leq i \leq I$ that
\begin{align*}
\sum_{j=1}^I\int_{\mathbb{R}^3}&Q_{ij}(f,g_j)(v)f(v)^{p-1}dv \\
&= \sum_{j=1}^I\int_{\mathbb{R}^{3}\times \R^3\times  \mathcal{S}^{2}} g_j(v_*)f(v)\big(f(v')^{p-1}-f(v)^{p-1}\big)|v-v_*|^{\lambda_{ij}}b_{ij}(\widehat{u}\cdot\sigma)d\sigma dv_*dv\,.
\end{align*}
Using that, with notation $p'=\frac{p}{p-1}$,
\begin{align*}
f(v)\big[ f(v')^{p-1}-f(v)^{p-1}\big]&= f(v)^p\Big[\Big(\frac{f(v')^{\frac{p}{2}}}{f(v)^{\frac{p}{2}}}\Big)^{\frac{2}{p'}}-1\Big]\\
&\leq \frac{1}{p'} f(v)^p\Big[\frac{f(v')^p}{f(v)^p}-1\Big]-\frac{1}{\max\{p,p'\}}f(v)^p\Big[\frac{f(v')^{\frac{p}{2}}}{f(v)^{\frac{p}{2}}}-1\Big]^2\\
&\leq \frac{1}{p'} \Big[f(v')^p-f(v)^p\Big]-\frac{1}{\max\{p,p'\}}\Big[f(v')^{\frac{p}{2}}-f(v)^{\frac{p}{2}}\Big]^2\,,
\end{align*}
we conclude that
\begin{equation}\label{J1J2}
\sum_{j=1}^I\int_{\mathbb{R}^3}Q_{ij}(f,g_j)(v)f(v)^{p-1}dv\leq J_{1,i}(g_j,f) - J_{2,i}(g_j,f)\,,
\end{equation}
where
$$J_{1,i}(g_j,f) :=\frac{1}{p'}\sum_{j=1}^I\int_{\mathbb{R}^{3}\times \R^3\times \mathcal{S}^{2}} g_j(v_*)\Big[f(v')^p-f(v)^p\Big]|v-v_*|^{\lambda_{ij}}b_{ij}(\widehat{u}\cdot\sigma)d\sigma dv_*dv,$$
and
$$J_{2,i}(g_j,f):= \frac{1}{\max\{p,p'\}}\sum_{j=1}^I\int_{\mathbb{R}^{3}\times \R^3\times \mathcal{S}^{2}} g_j(v_*)\Big[ f(v')^{\frac{p}{2}} - f(v)^{\frac{p}{2}} \Big]^2|v-v_*|^{\lambda_{ij}}b_{ij}(\widehat{u}\cdot\sigma)d\sigma dv_*dv.$$
For the term $J_1(g_j, f )$ we use the Cancellation Lemma \ref{Canc1} to obtain that
\begin{align*}
p'\,J_{1,i}(g_j,f)&= |\S|\,\sum_{j=1}^I\int_{\mathbb{R}^{3}\times \R^3}\int_0^\frac{\pi}{2}|v-v_*|^{\lambda_{ij}} g_j(v_*)f(v)^p\\
&\qquad\times\Big[\frac{1}{\beta(\cos\theta)^{3+\lambda_{ij}}}-1\Big] \sin\theta\,b_{ij}(\cos\theta)d\theta dv_*dv\\
&= \sum_{j=1}^I\int_{\mathbb{R}^{3}\times\R^3}\int_0^\frac{\pi}{2}|v-v_*|^{\lambda_{ij}} g_j(v_*)f(v)^p\,b_{i}(\cos\theta)d\theta dv_*dv\,,
\end{align*}
where recall that the scattering $b_{i}$ was defined as
$$
b_{i}(\cos\theta) = |\S|\,\sin\theta \, \Big[\frac{1}{\beta(\cos\theta)^{3+\lambda_{ij}}}-1\Big] b_{ij}(\cos\theta)\in L^{1}\big([0,\tfrac\pi2]\big)\,.
$$
Consequently,
\begin{equation}\label{J1}
p' \,J_{1,i}(g_j,f) \leq \max_{1\leq j\leq I}(m_im_j)^{-\frac{\lambda_{ij}}{2}} \|\overline{b}_{i}\|_{L^1}\sum_{j=1}^I\|\langle\cdot\rangle_j^{\overline{\overline{\lambda}}_{i}}g_j\|_{L^1}\|\langle\cdot\rangle_i^{\frac{\overline{\overline{\lambda}}_{i}}{p}}f\|^p_{L^p}\leq C'_{i}\,\|\langle\cdot\rangle_i^{\frac{\overline{\overline{\lambda}}_{i}}{p}}f\|^p_{L^p}\,.
\end{equation}
Given that $\lambda_{ij}\leq2$, we can take $$C'_{i}=\max_{1\leq j\leq I}(m_im_j)^{-\frac{\lambda_{ij}}{2}} \| b_{i}\|_{L^1} E_0\,.$$
Focusing in the term $J_{2,i}(g_j,f)$ now, we use the coercive estimate \eqref{coercive-principal-part}
\begin{equation}\label{J2}
\max\big\{ p, p' \big\}J_{2,i}(g_j,f)\geq \tilde c_{i}\big\|\langle v\rangle^{\overline{\lambda}_{i}/2}_i f^{\frac p2}\big\|^2_{H^{\frac{\overline{\overline{s}}_{i}}{2}}} - \tilde C_{i} \|\langle\cdot\rangle_i^{\frac{\overline{\overline{\lambda}}_{i}}{p}}f\|^p_{L^p}\,.
\end{equation}
The result follows from \eqref{J1J2}, \eqref{J1}, \eqref{J2}  with $c_i = \frac{\tilde c_{i}}{\max\{p,p'\}}$ and $C_{i} = \frac{C'_{i}}{p'} + \frac{\tilde{C}_{i}}{\max\{p,p'\}}$.
\end{proof}
\subsection{Generation and propagation of $L^p$ integrability}
In this section, we study the $L^p$ integrability generation and propagation property for solutions of the monatomic Boltzmann system in the case $1<p<\infty$.  
\begin{proof}[Proof of Theorem \ref{theo2}]
Take $1\leq i\leq I$ and multiplying the $i^{th}$ row of the Boltzmann equation \eqref{B-E1} by $p\,f_i$.   Integrating on  $v\in\mathbb{R}^3$ and using Corollary \ref{LemLp}, we obtain that
 \begin{equation}\label{EqLp}
\begin{split}
\frac{d}{dt}\|f_i\|^p_{L^p}&\leq \sum_{j=1}^I \int_{\mathbb{R}^3}Q_{ij}(f_i,f_j)(v)f_i(v)^{p-1}dv\leq -c_i\big\|\langle v\rangle^{\frac{\bar{\lambda}_{i}}{2}}_i f^{\frac p2}_i\big\|^2_{H^{\frac{\overline{\overline{s}}_{i}}{2}}}+C_{i}\|\langle \cdot\rangle_i^{\frac{\overline{\overline{\lambda}}_{i}}{p}}f_i\|^p_{L^p}
\end{split}
\end{equation}
with the corresponding constants $c_{i}$ and $C_{i}$ given in the statement of the corollary.  We use below the weighted interpolation a couple of times
\begin{equation*}
\| \langle \cdot \rangle^{a}\,g \|_{L^r} \leq \| \langle \cdot \rangle^{a_1}\,g \|^{\theta}_{L^{r_1}}\| \langle \cdot \rangle^{a_2}\,g \|^{1-\theta}_{L^{r_2}}
\end{equation*}
with
\begin{equation*}
\frac{1}{r} = \frac{\theta}{r_1} + \frac{1-\theta}{r_2}\,,\qquad a=\theta a_1 + (1-\theta)a_2\,,\qquad \theta\in(0,1).
\end{equation*}
Now, by Sobolev inequality it follows that
\begin{align*}
\sqrt{c_{s}}\,\big\| \langle v\rangle^{\frac{\overline{\lambda}_{i}}{p}}_i f_i \big\|^{\frac p2}_{L^{\frac{p\,q_s}{2}}}=\sqrt{c_{s}}\,\big\| \langle v\rangle^{\frac{\overline{\lambda}_{i}}{2}}_i f^{\frac p2}_i \big\|_{L^{q_s}} \leq \big\|\langle v\rangle^{\frac{\overline{\lambda}_{i}}{2}}_i f^{\frac p2}_i\big\|_{H^{\frac{\overline{\overline{s}}_{i}}{2}}}\,,\qquad q_s :=\frac{6}{3 - \overline{\overline{s}}_{i}}>2\,. 
\end{align*}
In addition, using the weighted interpolation we conclude that for $p\in(1,\infty)$,
\begin{equation}\label{EqLp2}
\begin{split}
\big\|\langle \cdot \rangle_i^\frac{\overline{\overline{\lambda}}_{i}}{p}f_i \big\|_{L^p} \leq \| \langle \cdot \rangle_i^{\frac{\alpha_i}{p}} f_i  \|^{1-\theta_s}_{L^1}\big\| \langle v\rangle^{\frac{\overline{\lambda}_{i}}{p}}_i f_i \big\|^{\theta_s}_{L^{\frac{p\,q_s}{2}}}
\end{split}
\end{equation}
where $\frac1p =(1-\theta_s) + \frac{2}{pq_s}\theta_s$, namely, $\theta_s:= \frac{pq_s-q_s}{pq_s - 2}\in(0,1)$, and $\alpha_{i}\leq\bar{\bar\lambda}_i\frac{pq_s-2}{q_s-2}$.  Then, using Young's inequality it holds that
$$\frac{d}{dt}\|f_i\|^p_{L^p}+ \tilde c_i\,\big\| \langle v\rangle^{\frac{\overline{\lambda}_{i}}{p}}_i f_i \big\|^{p}_{L^{\frac{p q_s}{2}}} \leq \tilde C_{i}\| \langle \cdot \rangle_i^{\frac{\alpha_i}{p}} f_i  \|^{p}_{L^1}\,.$$
Using the same interpolation, with no weights, it holds then for the same $\theta_s$ that
$$\frac{d}{dt}\|f_i\|^p_{L^p}+\tilde c_{i}\frac{ \|f_i\|^{\frac p\theta_s}_{L^p}}{\|f_i\|^{p\frac{1-\theta_s}{\theta_s}}_{L^1}}\leq \tilde C_{i}\| \langle \cdot \rangle_i^{\frac{\alpha_i}{p}} f_i  \|^p_{L^1}\leq \tilde C_i\,C^{p}_{1}\Big(1+t^{-\frac{\alpha_i/p-2}{\lambda^\natural}}\Big)^p\,,\qquad t>0\,.$$
For the latter inequality in the right side we used Theorem \ref{Mom}.  Thus, introducing $X:=X(t)=\|f_i(t)\|^p_{L^p}$ one has, after invoking conservation of mass, that
\begin{equation}\label{ODE}
\frac{d X}{dt}+ a\, X^\frac{1}{\theta_s} \leq B_{t_\star}:= C_i\,C^p_{1}\Big(1+t_\star^{-\frac{\alpha_i/p-2}{\lambda^\natural}}\Big)^p,\qquad t > t_\star\,,
\end{equation}
where $a :=c_i\, \textbf{m}^{-p\frac{1-\theta_s}{\theta_s}}_0$.   Therefore, using \cite[Lemma 18]{AG} in the interval $t > t_\star$ it follows that
$$\|f_i(t)\|^{p}_{L^p} = X(t) \leq \Big(\frac{B_{t_\star}}{a}\Big)^{\theta_s} + \Big(\frac{\theta_s}{(1-\theta_s)\,a}\Big)^{\frac{\theta_s}{1-\theta_s}}\big( t-t_\star \big)^{-\frac{\theta_s}{1-\theta_s}}=:K_{t,t_\star},\,\qquad t>t_\star\,.$$
This proves the estimate \eqref{theo2-e1} when particularised to $t_\star=\frac{t_0}{2}$ with $C_{t_0}=K^{\frac{1}{p}}_{t_0,t_0/2}$.
\medskip
\noindent
Next, take an integer $n\geq2$ such that $2n\geq \frac{\alpha_i}{p}$ and assume $\| \langle \cdot \rangle_i^{2n} f_{i,0}  \|_{L^1}<\infty$.  Then, Theorem \ref{Mom} implies that $\sup_{t\geq0}\| \langle \cdot \rangle_i^{2n} f_{i}(t)\|_{L^1}\leq \max \big\{{\bf m}_{2n}(0), C_2 \big\}$.  Consequently, estimate \eqref{ODE} changes to
\begin{equation*}
\frac{d X}{dt}+ a\, X^\frac{1}{\theta_s} \leq B:= \tilde{C}_{i}\,\max \big\{{\bf m}_{2n}(0), C_2 \big\}^p\,,\qquad t>0\,,
\end{equation*}
which leads to
$$X(t)\leq\max\Big\{\|f_{i,0}\|_{L^p}^p, \Big(\frac{B}{a}\Big)^{\theta_s} \Big\}\,.$$
This proves estimate \eqref{theo2-e2} since $\frac{\alpha_i}{p}\leq \frac{\bar{\bar\lambda}_i\,q_s}{q_s-2} = \frac{3\,\bar{\bar\lambda}_i}{\bar{\bar{s}}_i}$\,.
\end{proof}
\begin{proof}[Proof of Corollary \ref{CorLp}]
In one hand using estimate (\ref{EqLp}) with $p=2$, we get that
$$\frac{d}{dt}\|f_i\|_{L^2}^2+c_{i}\|\langle v\rangle_i^{\frac{\overline{\lambda}_i}{2}}f_i\|_{H^{\frac{\overline{\overline{s}}_{i}}{2}}}^2\leq C_{i}\|\langle\cdot\rangle_i^{\frac{\overline{\overline{\lambda}}_{i}}{2}}f_i\|_{L^2}^2.$$
In the other hand, the interpolation \eqref{EqLp2} with $p=2$ give us
$$
\big\|\langle \cdot \rangle_i^\frac{\overline{\overline{\lambda}}_{i}}{2}f_i \big\|_{L^2} \leq \| \langle \cdot \rangle_i^{\frac{\alpha_i}{2}} f_i  \|^{1-\theta_s}_{L^1}\big\| \langle v\rangle^{\frac{\overline{\lambda}_{i}}{2}}_i f_i \big\|^{\theta_s}_{L^{q_s}} \lesssim \| \langle \cdot \rangle_i^{\frac{\alpha_i}{2}} f_i  \|^{1-\theta_s}_{L^1}\big\|\langle v\rangle^{\frac{\overline{\lambda}_{i}}{2}}_i f_i\big\|^{\theta_s}_{H^{\frac{\overline{\overline{s}}_{i}}{2}}}\,,
$$
with $q_s,\,\theta_{s},\,\alpha_i$ previously defined.  Consequently,
\begin{align*}
\frac{d}{dt}\|f_i\|_{L^2}^2+\tilde c_{i}\|\langle v\rangle_i^{\frac{\overline{\lambda}_i}{2}}f_i\|_{H^{\frac{\overline{\overline{s}}_{i}}{2}}}^2&\leq \tilde C_{i}\| \langle \cdot \rangle_i^{\frac{\alpha_i}{2}} f_i  \|^{2}_{L^1}\,,
\end{align*}
which yields after time integration in the interval $(t_0,t)$
$$\tilde c_{i}\int^{t}_{t_0}\|\langle v\rangle_i^{\frac{\overline{\lambda}_i}{2}}f_i(\tau)\|_{H^{\frac{\overline{\overline{s}}_{i}}{2}}}^2\,d\tau \leq \|f_i(t_0)\|^2_{L^2} + \tilde C_{i}\int^{t}_{t_0}\|\langle \cdot \rangle_i^{\frac{\alpha_i}{2}}\,f_{i}(\tau)\|^{2}_{L^1}d\tau \leq \tilde C_{t_0}(1+t)\,.$$
In the last inequality we used theorems \ref{Mom} and \ref{theo2} so that $\tilde C_{t_0}\lesssim t^{-\beta}_{0}+1$ for some $\beta>0$.  This is the first statement of the corollary with $C_{t_0}=\frac{\tilde{C}_{t_0}}{\tilde{c_i}}$.
\smallskip
\noindent
The second statement is also clear from this last estimate using the propagation part of theorems \ref{Mom} and \ref{theo2}.
\end{proof}
\subsection{$L^\infty$ Theory}
Let us study in this section the particular case of $p=\infty$.  We start with a coercive estimate for the levels of levels of each component.  We set in the sequel
\begin{equation*}
f^+_{K}(v):=\big( f(v) - K \big)1_{\{f_{K}\geq 0\}}\,.
\end{equation*}
\begin{lem}\label{Linfty}
Take $\mathbb{G}\in U(D_0,E_0)$, $f$ sufficiently smooth, $\lambda_{ij}\in(0,2]$ and $s_{ij}\in (0,2)$. Then,
\begin{align*}
\sum_{j=1}^I \int_{\mathbb{R}^3}Q_{ij}&(f, g_j)(v)f^+_{K}(v)dv\\
&\leq - c_{i} \|\langle v\rangle_i^{\frac{\overline{\lambda}_{i}}{2}}f^+_{K}\|^2_{H^{\frac{\overline{\overline{s}}_{i}}{2}}}+\tilde{C}_{i}\Big(\|\langle\cdot\rangle_i^\frac{\overline{\overline{\lambda}}_{i}}{2}f^+_{K}\|_{L^2}^2 + K\,\|\langle\cdot\rangle_i^{\overline{\overline{\lambda}}_i}f^+_{K}\|_{L^1}\Big)\,,\qquad K\geq0\,,
\end{align*}
for positive constant
\begin{equation*} 
\tilde{C}_{i} \lesssim C_{i} + \max_{1\leq j\leq I}\Big[\big(m_im_j\big)^{-\frac{\lambda_{ij}}2}\Big]\|b_{i}\|_{L^1}E_0\,,
\end{equation*}
where the constants $c_{i}$ and $C_{i}$ are defined in Proposition \ref{Coercive}.
\end{lem}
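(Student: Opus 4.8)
The plan is to test the $i$-th equation against $f^+_K$ through the weak form of the operators $Q_{ij}$: taking $\psi=f^+_K$ one obtains
\[
\sum_{j=1}^I\int_{\R^3}Q_{ij}(f,g_j)(v)\,f^+_K(v)\,dv=\sum_{j=1}^I\int_{\R^3\times\R^3\times\S^2}g_j(v_*)\,f(v)\,\big(f^+_K(v')-f^+_K(v)\big)\,|v-v_*|^{\lambda_{ij}}b_{ij}(\widehat u\cdot\sigma)\,d\sigma dv_*dv .
\]
The only new algebraic input is the truncation split $f=(f\wedge K)+f^+_K$ with $0\le f\wedge K\le K$: on $\{f\ge K\}$ one has $f^+_K(v)=f(v)-K$, whence the exact identity $f(v)\big(f^+_K(v')-f^+_K(v)\big)=K\big(f^+_K(v')-f^+_K(v)\big)+f^+_K(v)\big(f^+_K(v')-f^+_K(v)\big)$, while on $\{f<K\}$ one has $f^+_K(v)=0$ and $f(v)f^+_K(v')\le K f^+_K(v')$ (using $f^+_K(v')\ge0$), so in either case
\[
f(v)\big(f^+_K(v')-f^+_K(v)\big)\le K\big(f^+_K(v')-f^+_K(v)\big)+f^+_K(v)\big(f^+_K(v')-f^+_K(v)\big).
\]
Inserting this into the weak form and integrating, the right-hand side breaks into $K\,\mathcal T_1+\mathcal T_2$, where $\mathcal T_2:=\sum_j\int g_j(v_*)f^+_K(v)\big(f^+_K(v')-f^+_K(v)\big)|v-v_*|^{\lambda_{ij}}b_{ij}\,d\sigma dv_*dv=\sum_j\big(Q_{ij}(f^+_K,g_j),f^+_K\big)_{L^2}$ and $\mathcal T_1:=\sum_j\int g_j(v_*)\big(f^+_K(v')-f^+_K(v)\big)|v-v_*|^{\lambda_{ij}}b_{ij}\,d\sigma dv_*dv$ carries no factor of $f$.

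For $\mathcal T_2$ I would apply Proposition \ref{Coercive} verbatim with $f$ replaced by $f^+_K$ (legitimate since, for $f$ smooth, $f^+_K\ge0$ is regular enough and the proposition assumes nothing on the sign of its first argument), obtaining
\[
\mathcal T_2\le -c_i\big\|\langle v\rangle_i^{\overline{\lambda}_i/2}f^+_K\big\|^2_{H^{\overline{\overline{s}}_i/2}}+C_i\big\|\langle\cdot\rangle_i^{\overline{\overline{\lambda}}_i/2}f^+_K\big\|^2_{L^2},
\]
with $c_i,C_i$ exactly the constants of that proposition. For $K\,\mathcal T_1$ I would invoke the Cancellation Lemma \ref{Canc1} (Remark \ref{cancelation-remark}) in the same way it is used in Step 2 of the proof of Proposition \ref{Coercive} and in the control of $J_{1,i}$ in Corollary \ref{LemLp}, but now with the linear weight $f^+_K$ in place of $f^2$: integrating first in $v$ and $\sigma$ replaces $\mathcal T_1$ by $\sum_j\int|v-v_*|^{\lambda_{ij}}g_j(v_*)f^+_K(v)\,b_i(\cos\theta)\,d\theta dv_*dv$ with the same $b_i\in L^1([0,\tfrac\pi2])$. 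Then, using $|v-v_*|^{\lambda_{ij}}\le(m_im_j)^{-\lambda_{ij}/2}\langle v\rangle_i^{\lambda_{ij}}\langle v_*\rangle_j^{\lambda_{ij}}$, $\lambda_{ij}\le\overline{\overline{\lambda}}_i\le2$, and $\sum_j\|\langle\cdot\rangle_j^{\lambda_{ij}}g_j\|_{L^1}\le\|\G\|_{L^1_2}\le E_0$ since $\G\in U(D_0,E_0)$, one gets
\[
K\,\mathcal T_1\le\Big(\max_{1\le j\le I}(m_im_j)^{-\lambda_{ij}/2}\|b_i\|_{L^1}E_0\Big)\,K\,\big\|\langle\cdot\rangle_i^{\overline{\overline{\lambda}}_i}f^+_K\big\|_{L^1}.
\]
Summing the two estimates and taking $\tilde C_i$ to be the larger of the two prefactors gives the asserted inequality, with $\tilde C_i\lesssim C_i+\max_{1\le j\le I}(m_im_j)^{-\lambda_{ij}/2}\|b_i\|_{L^1}E_0$.

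I do not expect a genuine obstacle: the analytic core, coercivity together with the cancellation estimate, has already been done in Proposition \ref{Coercive} and Corollary \ref{LemLp} and is reused essentially word for word. The points demanding care are the bookkeeping in the pointwise inequality, namely the case split on $\{f\ge K\}$ versus $\{f<K\}$ which isolates the lower-order, $K$-linear $L^1$ term produced by the bounded part $f\wedge K\le K$, and verifying that the mass-ratio-dependent Jacobian and kernel delivered by the cancellation lemma make the prefactor of $\mathcal T_1$ carry precisely the dependence claimed for $\tilde C_i$.
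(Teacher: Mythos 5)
Your proposal is correct and follows essentially the same route as the paper: the same pointwise truncation inequality $f(v)\big(f^+_K(v')-f^+_K(v)\big)\le f^+_K(v)\big(f^+_K(v')-f^+_K(v)\big)+K\big(f^+_K(v')-f^+_K(v)\big)$, with the quadratic part controlled by the coercivity/cancellation machinery of Proposition \ref{Coercive} applied to $f^+_K$ and the $K$-linear part by the Cancellation Lemma. The only cosmetic difference is that you invoke Proposition \ref{Coercive} as a black box for $\mathcal T_2$, whereas the paper re-expands it into the square-difference term $J_1$ and the cancellation term $J_2$; the resulting constants are the same.
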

\begin{proof}
Set $K\geq 0$, define $f_{K}(v):= f(v)-K$, and recall that $f^+_{k}(v)= f_{K}(v)1_{\{f_{K}\geq 0\}}$. Note that
\begin{align*}
f(v)\big[f^+_{K}(v')-f_{K}^+(v)\big]&=f_{K}(v)\big[f^+_{K}(v')-f^+_{K}(v)\big]+K\big[f^+_{K}(v')-f^+_{K}(v)\big]\\
& =f_{K}(v)\big(1_{\{f_{K}\geq 0\}}+1_{\{f_{K}< 0\}}\big)\big[f^+_{K}(v')-f^+_{K}(v)\big]+K\big[f^+_{K}(v')-f^+_{K}(v)\big]\\
&\leq f^+_{K}(v)\big[f^+_{K}(v')-f^+_{K}(v)\big]+K\big[f^+_{K}(v')-f^+_{K}(v)\big]\,,
\end{align*}
where we used in the last step that
$$
f_{K}(v)1_{\{f_{K}<0\}}\big[f^+_{K}(v')-f^+_{K}(v)\big]=f_{K}(v)1_{\{f_{K}<0\}}f^+_{K}(v')\leq 0\,.
$$
Therefore, it holds that
\begin{align*}
f(v)\big[f^+_{K}(v')-f_{K}^+(v)\big]&=f^+_{K}(v)\big[f^+_{K}(v')-f^+_{K}(v)\big]+K\big[f^+_{K}(v')-f^+_{K}(v)\big]\\
&\leq -\frac{1}{2} \big[f^+_{K}(v')-f^+_{K}(v)\big]^2
+\frac{1}{2}\big[ \big( f^+_{K}(v')\big)^2-\big(f^+_{K}(v)\big)^2\big]\\
&\hspace{2cm} +K \big[f^+_{K}(v')-f^+_{K}(v)\big]\,.
\end{align*}
And consequently,
\begin{align*}
 \sum_{j=1}^I\int_{\mathbb{R}^3}Q_{ij}(f,g_j)(v)f^+_{K}(v)dv&=\sum_{j=1}^I\int_{\mathbb{R}^3\times \mathbb{R}^3\times \mathcal{S}^2}g_j(v_*)f(v)\big[f^+_{K}(v')-f^+_{K}(v)\big]\\
&\qquad\qquad\times|v-v_*|^{\lambda_{ij}}b_{ij}(\widehat{u}\cdot\sigma)d\sigma dv_*dv\\
&\leq J_1+J_2+J_3,
\end{align*}
where
\begin{align*}
J_1&=-\frac{1}{2}\displaystyle\sum_{j=1}^I\int_{\mathbb{R}^3\times \mathbb{R}^3\times \mathcal{S}^2}g_j(v_*)\big[f^+_{K}(v')-f^+_{K}(v)\big]^2|v-v_*|^{\lambda_{ij}}b_{ij}(\widehat{u}\cdot\sigma)d\sigma dv_*dv\\
J_2&=\frac{1}{2}\displaystyle\sum_{j=1}^I\int_{\mathbb{R}^3\times \mathbb{R}^3\times \mathcal{S}^2}g_j(v_*)\big[\big(f^+_{K}(v')\big)^2-\big(f^+_{K}(v)\big)^2\big]|v-v_*|^{\lambda_{ij}}b_{ij}(\widehat{u}\cdot\sigma)d\sigma dv_*dv\\
J_3&=K\displaystyle\sum_{j=1}^I\int_{\mathbb{R}^3\times \mathbb{R}^3\times \mathcal{S}^2}g_j(v_*)\big[f^+_{K}(v')-f^+_{K}(v)\big]|v-v_*|^{\lambda_{ij}}b_{ij}(\widehat{u}\cdot\sigma)d\sigma dv_*dv\,.
\end{align*}
Using the coercive estimate given in the proof of Proposition \ref{Coercive} one deduces that
$$J_1\leq -c_{i} \big\|\langle v\rangle_i^{\frac{\overline{\lambda}_{i}}{2}}f^+_{K}\big\|^2_{H^{\frac{\overline{\overline{s}}_{i}}{2}}}+C_{i}\|\langle\cdot\rangle_i^\frac{\overline{\overline{\lambda}}_{i}}{2}f^+_{K}\|_{L^2}^2\,.$$
Furthermore, using the Cancellation Lemma \ref{Canc1} for the terms $J_1$ and $J_2$, it follows that
\begin{align*}
J_2&\lesssim\max_{1\leq j\leq I}\Big[\big(m_im_j\big)^{-\frac{\lambda_{ij}}2}\Big]\|b_{i}\|_{L^1}E_0\|\langle\cdot\rangle_i^\frac{\overline{\overline{\lambda}}_{i}}{2}f^+_{K}\|^2_{L^2}\qquad \text{and}\\ 
J_3&\lesssim K\max_{1\leq j\leq I}\Big[\big(m_im_j\big)^{-\frac{\lambda_{ij}}2}\Big]\|b_{i}\|_{L^1}E_0\|\langle\cdot\rangle_i^{\overline{\overline{\lambda}}_{i}}f^+_{K}\|_{L^1}.
\end{align*}
The result follows adding the estimates.
\end{proof}

\begin{lem}\label{inc-hom-lem}
Fix dimension $d\geq2$, $s\in(0,2)$ and $1-\frac sd\leq\alpha\leq1$.  Then,
\begin{equation*}
\| \varphi \|^{q\alpha}_{L^{q}} \leq \| \varphi \|^{2(\frac1\theta-1)}_{L^{2}}\| \varphi \|^{2}_{H^{\frac s2}}\,, 
\end{equation*}
where
\begin{equation*}
q:=q(s,\alpha)= 2\Big(1+\frac{s}{d\alpha}\Big)>2\,,\qquad \theta:=\theta(s,\alpha) = \frac{d}{s+d\alpha}\in(0,1]\,.
\end{equation*}
\end{lem}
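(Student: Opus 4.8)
The plan is to read Lemma~\ref{inc-hom-lem} as a Gagliardo--Nirenberg interpolation inequality and to derive it from two standard facts: the fractional Sobolev embedding $H^{s/2}(\mathbb{R}^d)\hookrightarrow L^{p^*}(\mathbb{R}^d)$ with $p^*:=\frac{2d}{d-s}$, and Hölder interpolation between $L^2$ and $L^{p^*}$. Since $0<\frac s2<\frac d2$ (because $s<2\le d$) the embedding is available and $p^*<\infty$, and the two-sided restriction $1-\frac sd\le\alpha\le 1$ is precisely what forces the interpolation parameter to equal $\theta$ and to lie in $(0,1]$.

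First I would record the ingredients. Writing $\|\varphi\|_{\dot H^{s/2}}^{2}:=\int_{\mathbb{R}^d}|\xi|^{s}\,|\mathcal{F}\varphi(\xi)|^{2}\,d\xi$, the Sobolev inequality gives $\|\varphi\|_{L^{p^*}}\le C_{d,s}\,\|\varphi\|_{\dot H^{s/2}}\le C_{d,s}\,\|\varphi\|_{H^{s/2}}$, the last step because $|\xi|^{s}\le(1+|\xi|^{2})^{s/2}$. Next, for any $q$ with $2\le q\le p^*$, Hölder's inequality yields $\|\varphi\|_{L^{q}}\le\|\varphi\|_{L^2}^{\,1-\lambda}\,\|\varphi\|_{L^{p^*}}^{\,\lambda}$ where $\frac1q=\frac{1-\lambda}{2}+\frac{\lambda}{p^*}$, i.e. $\lambda=\frac ds\big(1-\frac2q\big)$.

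The next step is the exponent bookkeeping for the specific choice $q=2\big(1+\frac{s}{d\alpha}\big)$. One computes $\frac2q=\frac{d\alpha}{d\alpha+s}$, hence $1-\frac2q=\frac{s}{d\alpha+s}$, and therefore $\lambda=\frac ds\cdot\frac{s}{d\alpha+s}=\frac{d}{d\alpha+s}=\theta$; the hypothesis $\alpha\ge 1-\frac sd$ gives $\theta\le1$ and also $q\le p^*$, whereas $q>2$ is immediate from $s>0$. Substituting the Sobolev bound into the Hölder interpolation gives $\|\varphi\|_{L^{q}}\le C_{d,s}^{\theta}\,\|\varphi\|_{L^2}^{\,1-\theta}\,\|\varphi\|_{H^{s/2}}^{\,\theta}$. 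Finally, since $q\alpha=2\big(\alpha+\frac sd\big)=\frac{2(d\alpha+s)}{d}=\frac2\theta$, raising the last inequality to the power $q\alpha$ gives $\|\varphi\|_{L^{q}}^{\,q\alpha}\le C_{d,s}^{\,q\alpha}\,\|\varphi\|_{L^2}^{\,2(1/\theta-1)}\,\|\varphi\|_{H^{s/2}}^{\,2}$, which is the claimed estimate.

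There is no real obstacle in this argument; the only points requiring attention are the exponent arithmetic (checking $\lambda=\theta$, $q\le p^*$ and $q\alpha=2/\theta$) and the fact that the Sobolev constant $C_{d,s}$ is not $1$ in general, so the inequality as stated should be understood with an implicit dimensional multiplicative constant, which is harmless for the applications (as in Lemma~\ref{Linfty} and the $L^\infty$ argument that follow).
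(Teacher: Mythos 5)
Your proof is correct and follows essentially the same route as the paper: fractional Sobolev embedding $H^{s/2}\hookrightarrow L^{p^*}$ combined with Lebesgue interpolation between $L^2$ and $L^{p^*}$, with the exponent bookkeeping arranged so that $\theta q\alpha=2$ (the paper solves this identity for $q$, while you start from the given $q$ and verify it, which is an immaterial difference). Your remark that the inequality holds only up to the Sobolev constant $C_{d,s}$ is accurate and applies equally to the paper's own proof; it is indeed harmless for the subsequent applications.
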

\begin{proof}
Note that for $2\leq q\leq p_*=\frac{2}{1-s/d}$ Lebesgue's interpolation and Sobolev inequality gives that
\begin{equation*}
\| \varphi \|^{q\alpha}_{L^{q}}\leq \| \varphi \|^{(1-\theta)q\alpha}_{2}\| \varphi \|^{\theta q\alpha}_{L^{p_*}}\leq \| \varphi \|^{(1-\theta)q\alpha}_{2}\| \varphi \|^{\theta q\alpha}_{H^{\frac s2}}\,,\qquad \theta = \frac ds\Big(1 - \frac2q\Big)\,.
\end{equation*}
Choosing $\theta q \alpha =2$ we arrive to
\begin{equation*}
q=2\Big(1+\frac{s}{d\alpha}\Big)\in(2,p_*]\,,\quad\text{and consequently}\quad \theta=\frac{d}{s+d\alpha}>0\,.
\end{equation*}
The proof follows by writing $(1-\theta)q\alpha = 2(\frac1\theta -1)$. 
\end{proof}
\begin{proof}[Proof of Theorem \ref{theo3}]
We follow the spirit of the argument used in the proof of \cite[Theorem 2]{RA} after fixing $1\leq i \leq I$.  For solution of the Boltzmann system we apply Lemma \ref{Linfty} with  $\mathbb{G} = \mathbb{F}$ and $f=f_i$ to conclude that
\begin{equation}\label{E1}
\frac{1}{2}\frac{d}{dt} \|f^+_{K}\|^2_{L^2}+c_{i}\|\langle v \rangle_i^{\frac{\overline{\lambda}_{i}}{2}}f^+_{K}\|^2_{H^{\frac{\overline{\overline{s}}_{i}}{2}}}\leq \tilde{C}_{i}\Big(\|\langle\cdot\rangle_i^\frac{\overline{\overline{\lambda}}_{i}}{2}f^+_{K}\|^2_{L^2}+K \|\langle\cdot\rangle_i^{\overline{\overline{\lambda}}_{i}}f^+_{K}\|_{L^1}\Big).
\end{equation}
Let us introduce the levels and times ($K>0$ and $t_*>0$) 
$$K_k:=K\Big(1-\frac{1}{2^{k}}\Big),\qquad t_k=t_*\Big(1-\frac{1}{2^{k+1}}\Big),\qquad k=0,1,2,\cdots\,.$$
Here $K>0$ will be chosen later sufficiently large.  Define also the energy functional (with notation $f_k = f^+_{K_k}$)
$$W_k:= \frac{1}{2}\sup_{t\in[t_k,T]}\|f_{k}(t)\|^2_{L^2}+c_i\int_{t_k}^T\|\langle\cdot\rangle_i^\frac{\overline{\lambda}_{i}}{2}f_{k}(\tau)\|^2_{H^{\frac{\overline{\overline{s}}_{i}}{2}}}d\tau,\qquad T>t_*>0\,.$$
Integrating estimate \eqref{E1} we deduce that for $t_{k-1}\leq \tau\leq t_k$
$$W_k\leq \frac{1}{2}\|f_{k}(\tau)\|^2_{L^2}+\tilde{C}_{i}\int_{t_{k-1}}^T\Big(\|\langle \cdot \rangle_i^\frac{\overline{\overline{\lambda}}_{i}}{2}f_{k}\|_{L^2}^2+ K\|\langle \cdot \rangle_i^{\overline{\overline{\lambda}}_{i}}f_{k}\|_{L^1}\Big)\,,$$
and taking the mean over $\tau\in [t_{k-1},t_k]$ (noticing that $t_k-t_{k-1}=\frac{t_*}{2^{k+1}}$), it follows that
\begin{align}\label{E3}
\begin{split}
W_k &\leq \Big(\frac{2^k}{t_*}+\tilde{C}_{i}\Big)\int_{t_{k-1}}^T\Big(\|\langle\cdot \rangle_i^\frac{\overline{\overline{\lambda}}_{i}}{2}f_{k}\|_{L^2}^2+K\|\langle\cdot \rangle_i^{\overline{\overline{\lambda}}_{i}}f_{k}\|_{L^1}\Big)\\
&\leq 2^k\Big(\frac{2^k}{t_*}+\tilde{C}_{i}\Big)\int_{t_{k-1}}^T\|\langle\cdot \rangle_i^\frac{\overline{\overline{\lambda}}_{i}}{2}f_{k-1}1_{f_k\geq0}\|_{L^2}^2\,.
\end{split}
\end{align}
where in the last step we used the key observation that in the set $\{f_{k}\geq 0\}$ one has that $f_{k-1}\geq 2^{-k}K$, thus 
$$
K\|\langle\cdot \rangle_i^{\overline{\overline{\lambda}}_{i}}f_{k}\|_{L^1} \leq 2^k\|\langle\cdot \rangle_i^\frac{\overline{\overline{\lambda}}_{i}}{2}f_{k-1}1_{f_k\geq0}\|_{L^2}^2\,.
$$
In fact, keep in mind that for any $\beta>0$
\begin{equation}\label{inc-hom}
1_{\{f_k\geq0\}}\leq \Big(\frac{2^k}{K}f_{k-1}\Big)^\beta\,.
\end{equation}
Now,  interpolation give us that
\begin{equation*}
\|\langle\cdot \rangle_i^\frac{\overline{\overline{\lambda}}_{i}}{2}f_{k-1}1_{f_k\geq0}\|_{L^2}\leq \|\langle\cdot \rangle_i^{\frac{\xi-1}{\xi-2}\overline{\bar \lambda}_i}f\|^{1-\theta}_{L^1}\|f_{k-1}1_{f_k\geq0}\|^{\theta}_{L^\xi}\,,\qquad \xi>2,\quad \theta =\frac{\xi}{2(\xi-1)}\,.
\end{equation*}
Moments are controlled by Theorem \ref{Mom} so that
\begin{equation*}
\|\langle\cdot \rangle_i^{\frac{\xi-1}{\xi-2}\overline{\bar \lambda}_i}f\|_{L^1} \leq C\Big(1+ t_*^{-\frac{\frac{\xi-1}{\xi-2}\overline{\bar \lambda}_i - 2}{\lambda^\natural}}\Big)\,,
\end{equation*}
for a constant $C:=C(E_0,\xi)>0$.   Also, using \eqref{inc-hom} with $\beta=q-\xi>0$ with $q>2$ given in Lemma \ref{inc-hom-lem}, that is $q=2\Big(1+\frac{\overline{\bar{s}}_i}{3\alpha}\Big)$, it holds that
\begin{equation*}
\|f_{k-1}1_{f_k\geq0}\|_{L^\xi} \leq \frac{2^{k(\frac{q}{\xi} - 1)}}{K^{\frac{q}{\xi} - 1}}\|f_{k-1}\|^{\frac{q}{\xi}}_{L^q}\,.
\end{equation*}
In this way, we are led from \eqref{E3} and some algebra to the estimate
\begin{equation*}
W_k \leq \frac{2^{2k\frac{q-1}{\xi-1}}}{K^{\frac{q-\xi}{\xi-1}}}\,\tilde{C}\,\Big( 1 + t^{-\alpha_0}_* \Big)\int_{t_{k-1}}^T\|f_{k-1}\|^{\frac{q}{\xi-1}}_{L^q}\,,\qquad 2<\xi<q\,,
\end{equation*} 
for a constant $\tilde{C}$ depending on $(E_0,\xi)$ (so the limit $\xi\rightarrow 2$ is not allowed) and
$\alpha_0 := \frac{\overline{\bar{\lambda}}_i - \frac{2(\xi-2)}{\xi-1}}{\lambda^\natural}$.  Invoking Lemma \ref{inc-hom-lem} with $s=\overline{\bar{s}}_{i}$ and $\alpha=\frac{1}{\xi-1}<1$ we conclude that for $k=1,2,\cdots$
\begin{align*}
W_k &\leq \frac{2^{2k\frac{q-1}{\xi-1}}}{K^{\frac{q-\xi}{\xi-1}}}\,\tilde{C}\,\Big( 1 + t^{-\alpha_0}_* \Big)\sup_{t\in[t_{k-1},T]}\| f_{k-1}\|^{2(\frac{\overline{\bar{s}}_{i}}{3} + \frac{1}{\xi-1} - 1)}_{L^{2}}\int_{t_{k-1}}^T\|f_{k-1}\|^{2}_{H^{\frac{\overline{\bar{s}}_{i}}{2}}}\\
&\leq \frac{2^{2k\frac{q-1}{\xi-1}}}{K^{\frac{q-\xi}{\xi-1}}}\,\tilde{C}\,c^{-1}_{i}\,\Big( 1 + t^{-\alpha_0}_* \Big)W^{\frac{\overline{\bar{s}}_{i}}{3} + \frac{1}{\xi-1}}_{k-1}\,,\qquad\qquad  2<\xi<\frac{2-\frac {\overline{\bar{s}}_{i}}{3}}{1-\frac {\overline{\bar{s}}_{i}}{3}}\,.
\end{align*} 
Set $a=\frac{\overline{\bar{s}}_{i}}{3} + \frac{1}{\xi-1}>1$, $Q=2^{\frac{2}{a-1}\frac{q-1}{\xi-1}}>1$, and choose
$$
K=\Big[ 4^{\frac{q-1}{\xi-1}}\,\tilde{C}\,c^{-1}_i\,W^{a-1}_0\,\Big( 1 + t^{-\alpha_0}_* \Big) \Big]^{\frac{\xi-1}{q-\xi}}\,.
$$
Then, $W^{*}_{k}=\frac{W_0}{Q^{k-1}}$ is a super solution of the aforementioned recursion inequality.  Thus,
$$
W_k \leq W^*_{k} \rightarrow 0\quad {\rm as}\quad k\rightarrow \infty,
$$
provided $W_0$ is finite.  The fact that $W_0$ is finite is clear invoking Theorem \ref{theo2} and Corollary \ref{CorLp}
$$
W_0=\frac{1}{2}\sup_{t\in [\frac{t_*}{2}, T]}\|f(t)\|^2_{L^2}+c_{i}\int_{\frac{t_*}{2}}^T\big\|\langle\cdot\rangle_i^\frac{\overline{\lambda}_{i}}{2}f(\tau)\|^2_{H^{\frac{\overline{\overline{s}}_i}{2}}}d\tau\leq C_{t_*}(1+T),\qquad T>t_*>0,
$$
where $C(t_*)\lesssim \big( t^{-\beta}_* + 1 \big)$ (for some $\beta>0$) is defined in such theorem and corollary.

\medskip
\noindent
As a consequence, since $K_{k}\rightarrow K$ and $t_{k}\rightarrow t_*$ as $k\rightarrow\infty$,
$$
\sup_{t\in [t_*, T]}\|f_{K}^+(t)\|_{L^2}=0\,,
$$
and thus, using the choice for $K$,
\begin{equation}\label{Linf-T}
f(t)\leq K\sim C_{t_*}(1+T^{(a-1)\frac{\xi-1}{q-\xi}})\,,\qquad T\geq t \geq t_{*}>0\,,
\end{equation}
with a constant  $C_{t_*}\lesssim (t^{-\beta}_*+ 1)$ (for some $\beta>0$) depending only on $(D_0,E_0,\xi)$.  In order to make estimate \eqref{Linf-T} independent of $T$ we recall that the Boltzmann system is time-invariant, so we can perform previous analysis in any time interval $(t_0,t_0+T]$.  In this case, $T>0$ plays the role of the time interval length which can be set, for example, equal $1$.  It is, of course, essential that the constants involved only depend on the conserved quantities $(D_0, E_0)$.

\medskip
\noindent
Regarding the propagation part of the result one uses a slight modification of the previous argument, we refer to \cite{RA} for the details.  We only mention that the condition $f_{i,0}\in L^{1}_{2n}$ for $2n\geq\frac{18\bar{\bar{\lambda}}_i}{\bar{\bar s}_i}$\footnote{This condition follows after specifically choosing $\xi = \frac{2-\frac {\overline{\bar{s}}_{i}}{2}}{1-\frac {\overline{\bar{s}}_{i}}{3}}$ in the argument.} is necessary to guarantee the finiteness of the constants involved near time zero.
\end{proof}

\section{Appendix}
\subsection{Interpolation Lemma}
\begin{lem}[Mixed interpolation]\label{App-interp}
For any $b\geq a\geq0$ and $\beta>0$
\begin{equation*}
{\bf m}_{a+\beta,i}[f] \, {\bf m}_{b,j}[g]\leq \theta\, {\bf m}_{b+\beta,i}[f]\, {\bf m}_{a,j}[g] + (1-\theta)\,{\bf m}_{b+\beta,j}[g]\, m_{a,i}[f]\,,
\end{equation*}
with $\theta = \frac{\beta}{b+\beta-a} \in (0,1]$.
\end{lem}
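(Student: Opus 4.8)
The plan is to reduce the claimed inequality between products of moments to a pointwise inequality in the velocity variables and then invoke Young's inequality. First I would write the left-hand side as a single double integral,
$$
{\bf m}_{a+\beta,i}[f]\,{\bf m}_{b,j}[g] = \int_{\R^3}\int_{\R^3} f(v)\,g(w)\,\langle v\rangle_i^{a+\beta}\,\langle w\rangle_j^{b}\,dv\,dw\,,
$$
and similarly express the two terms on the right-hand side as double integrals of $f(v)\,g(w)$ against $\langle v\rangle_i^{b+\beta}\langle w\rangle_j^{a}$ and against $\langle v\rangle_i^{a}\langle w\rangle_j^{b+\beta}$, respectively. Since $f,g\geq0$ by the standing convention in the definition of the moments, it suffices to establish the pointwise bound
$$
\langle v\rangle_i^{a+\beta}\langle w\rangle_j^{b}\leq \theta\,\langle v\rangle_i^{b+\beta}\langle w\rangle_j^{a} + (1-\theta)\,\langle v\rangle_i^{a}\langle w\rangle_j^{b+\beta}
$$
for every $v,w\in\R^3$.

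Next I would set $x=\langle v\rangle_i\geq1$ and $y=\langle w\rangle_j\geq1$, divide through by $x^{a}y^{a}>0$, and put $p:=b+\beta-a>0$. With $\theta=\beta/p$ one has $\beta=\theta p$ and $b-a=(1-\theta)p$, so the pointwise inequality is equivalent to
$$
x^{\theta p}\,y^{(1-\theta)p}\leq \theta\,x^{p}+(1-\theta)\,y^{p}\,.
$$
Writing $X=x^{p}\geq0$ and $Y=y^{p}\geq0$, this is precisely the weighted arithmetic--geometric mean (Young) inequality $X^{\theta}Y^{1-\theta}\leq \theta X+(1-\theta)Y$, valid for any $\theta\in[0,1]$. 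The hypothesis $\theta\in(0,1]$ is automatically satisfied: $\beta>0$ forces $\theta>0$, while $b\geq a$ gives $b+\beta-a\geq\beta$, hence $\theta\leq1$; the boundary case $\theta=1$ (that is $a=b$) makes the claim a trivial identity.

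Integrating the pointwise bound against the non-negative measure $f(v)\,g(w)\,dv\,dw$ then yields the stated estimate. I do not expect any genuine obstacle here; the only points deserving a line of care are the verification that $\theta$ indeed lies in $(0,1]$ under the stated hypotheses and the observation that the reduction to a pointwise inequality is legitimate precisely because the distribution functions entering the moments are non-negative.
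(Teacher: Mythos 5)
Your proof is correct. It takes a slightly different route from the paper's: the paper first applies the standard moment interpolation ${\bf m}_{a+\beta,i}[f]\leq {\bf m}_{b+\beta,i}[f]^{\theta}\,{\bf m}_{a,i}[f]^{1-\theta}$ and ${\bf m}_{b,j}[g]\leq {\bf m}_{b+\beta,j}[g]^{1-\theta}\,{\bf m}_{a,j}[g]^{\theta}$ (each a consequence of H\"older's inequality with respect to the measures $f\,dv$ and $g\,dv$), multiplies the two, regroups the common exponents into $\big({\bf m}_{b+\beta,i}[f]{\bf m}_{a,j}[g]\big)^{\theta}\big({\bf m}_{b+\beta,j}[g]{\bf m}_{a,i}[f]\big)^{1-\theta}$, and only then applies Young's inequality at the level of the two moment products. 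You instead push Young's inequality all the way down to the integrand, proving the pointwise bound $\langle v\rangle_i^{a+\beta}\langle w\rangle_j^{b}\leq \theta\,\langle v\rangle_i^{b+\beta}\langle w\rangle_j^{a}+(1-\theta)\,\langle v\rangle_i^{a}\langle w\rangle_j^{b+\beta}$ and integrating against the non-negative product measure $f(v)g(w)\,dv\,dw$. Both arguments rest on the same weighted AM--GM inequality with the same exponent $\theta=\beta/(b+\beta-a)$; yours is marginally more elementary (no intermediate H\"older step) and makes the non-negativity hypothesis on $f,g$ explicit, while the paper's version is the more standard ``interpolate, then Young'' phrasing and reuses the moment-interpolation identity already employed elsewhere in the manuscript. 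Your verification that $\theta\in(0,1]$ and your handling of the degenerate case $a=b$ are both correct.
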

\begin{proof}
Standard interpolation gives that
\begin{align*}
{\bf m}_{a+\beta,i}[f] &\leq {\bf m}^{\theta}_{b+\beta,i}[f]\,{\bf m}^{1-\theta}_{a,i}[f]\,, \qquad\text{and}\\
{\bf m}_{b,j}[g] &\leq {\bf m}^{1-\theta}_{b+\beta,j}[g]\,{\bf m}^{\theta}_{a,j}[g]\,,\qquad \theta=\frac{\beta}{b+\beta-a}\,.
\end{align*}
The result follows from here associating the common powers and applying Young's inequality.
\end{proof}

\subsection{Cancellation Lemma}
We show in this section the cancellation Lemma of Boltzmann equation for monatomic gas mixtures.  All results are proven assuming a integrable approximation of the scattering kernel, so that the collision operator can be separated, and then arguing by density.  For the classical Boltzmann equation the reader can consult \cite{ADVW} where technical details are filled. 

\smallskip
\noindent
In all the results below we assume the scattering is forward, that is, $B_{ij}$ has support in the set $\{\widehat{u}\cdot\sigma\geq0\}$.
\begin{lem}[Cancellation lemma]\label{P-1}
For a.e. $v_*\in \mathbb{R}^3$ and $1\leq i,j\leq I$ we have that
\begin{equation}\label{Canc1}
\int_{\mathbb{R}^3\times \mathcal{S}^{2}}B_{ij}( u,\widehat{u}\cdot\sigma)\big(f(v')-f(v) \big)dvd\sigma = (f*S_{ij})(v_*),\qquad u = v - v_*\,,
\end{equation}
where
$$
S_{ij}(u) = |\S|\int_0^\frac{\pi}{2} \bigg[\frac{1}{\beta(\cos\theta)^{3}}B_{ij}\Big(\frac{|u|}{\beta(\cos\theta)}, \cos(\theta)\Big) - B_{ij}\big(|u|, \cos(\theta)\big)\bigg]\sin\theta\,d\theta.
$$
Here $\beta(x) =  \sqrt{\alpha^2 + (1-\alpha)^2 + 2\alpha(1-\alpha)x}\in(0,1]$.
\end{lem}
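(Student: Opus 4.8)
The plan is to adapt the cancellation argument of Alexandre--Desvillettes--Villani--Wennberg \cite{ADVW} to the asymmetric collision laws \eqref{collision-laws}. Since $b_{ij}$ is non-integrable near $\theta=0$, the gain and loss parts of the integrand in \eqref{Canc1} cannot be integrated separately, so I would first establish the identity for an integrable truncation $b_{ij}^\delta$ of the angular kernel --- for which the collision operator splits and Fubini applies without restriction --- and then pass to the limit $\delta\to0$. Write $u=v-v_*$ and let $\alpha=\tfrac{m_j}{m_i+m_j}\in(0,1)$ be the mass ratio from \eqref{collision-laws}, so that $u':=v'-v_*=(1-\alpha)u+\alpha|u|\sigma$. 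Two elementary identities are used throughout: $|u'|=|u|\,\beta(\widehat{u}\cdot\sigma)$ and $\widehat{u'}\cdot\sigma=\dfrac{(1-\alpha)(\widehat{u}\cdot\sigma)+\alpha}{\beta(\widehat{u}\cdot\sigma)}$, with $\beta$ as in the statement (which is symmetric under $\alpha\leftrightarrow 1-\alpha$, so the labelling of the masses is immaterial).

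The loss part $\int_{\R^3\times\mathcal{S}^2}B_{ij}(u,\widehat{u}\cdot\sigma)f(v)\,d\sigma\,dv$ is immediate: $f(v)$ is independent of $\sigma$, so after translating $v\mapsto u+v_*$ and integrating $\sigma$ over $\mathcal{S}^2$ in polar coordinates about $\widehat{u}$ (using the forward-scattering support $\widehat{u}\cdot\sigma\geq0$) one obtains $(f*S_{ij}^-)(v_*)$ with $S_{ij}^-(u)=|\mathcal{S}|\int_0^{\pi/2}B_{ij}(|u|,\cos\theta)\sin\theta\,d\theta$, where $|\mathcal{S}|=2\pi$ is the azimuthal factor.

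The substantive step is the gain part $\int_{\R^3\times\mathcal{S}^2}B_{ij}(u,\widehat{u}\cdot\sigma)f(v')\,d\sigma\,dv$. For $\sigma$ fixed I would change variables $v\mapsto v'$, equivalently $u\mapsto u'=(1-\alpha)u+\alpha|u|\sigma$; on the support $\{\widehat{u}\cdot\sigma\geq0\}$ this is a diffeomorphism onto the cone $\{w:\widehat{w}\cdot\sigma\geq\alpha/\sqrt{\alpha^2+(1-\alpha)^2}\}$, with explicit inverse (recover $\widehat{u}\cdot\sigma$ from $\widehat{u'}\cdot\sigma$ by inverting the monotone relation above, then $|u|=|u'|/\beta$, then the direction of $u$), and with Jacobian $\det\!\big(\tfrac{\partial u'}{\partial u}\big)=(1-\alpha)^2\big[(1-\alpha)+\alpha(\widehat{u}\cdot\sigma)\big]$, obtained because $(1-\alpha)I+\alpha\,\sigma\otimes\widehat{u}$ has eigenvalue $(1-\alpha)$ of multiplicity two and remaining eigenvalue $(1-\alpha)+\alpha(\widehat{u}\cdot\sigma)$. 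After this substitution I would apply Fubini again, integrating $\sigma$ for fixed $u'$ in polar coordinates about $\widehat{u'}$ with polar angle $\theta'$, and then substitute $\theta'\mapsto\theta$ through $\cos\theta'=\frac{(1-\alpha)\cos\theta+\alpha}{\beta(\cos\theta)}$. The decisive computation is that the Jacobian of this angular substitution cancels the volume Jacobian exactly: differentiating and using $\beta'(x)=\alpha(1-\alpha)/\beta(x)$ gives $\frac{d\cos\theta'}{d\cos\theta}=\frac{(1-\alpha)^2[(1-\alpha)+\alpha\cos\theta]}{\beta(\cos\theta)^3}$, hence $\frac{\sin\theta'\,d\theta'}{(1-\alpha)^2[(1-\alpha)+\alpha\cos\theta]}=\frac{\sin\theta\,d\theta}{\beta(\cos\theta)^3}$, while the range $\theta'\in[0,\theta'_{\max}]$ becomes $\theta\in[0,\pi/2]$. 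Together with $|u|=|u'|/\beta(\cos\theta)$ this yields $(f*S_{ij}^+)(v_*)$ with $S_{ij}^+(u)=|\mathcal{S}|\int_0^{\pi/2}\beta(\cos\theta)^{-3}B_{ij}\big(|u|/\beta(\cos\theta),\cos\theta\big)\sin\theta\,d\theta$. Subtracting the loss part gives $f*(S_{ij}^+-S_{ij}^-)=f*S_{ij}$, and a final passage $\delta\to0$ removes the truncation.

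I expect the main obstacle to be the rigorous bookkeeping around the change of variables in the gain term together with the truncation: one must check that $u\mapsto u'$ at fixed $\sigma$ is a bijection between sets of full measure (controlling the singularity at $u=0$ and the fact that its range is a proper cone, not all of $\R^3$), verify that all interchanges of integration are licit for the truncated kernel $b_{ij}^\delta$, and justify that the limit $\delta\to0$ is legitimate in the sense in which \eqref{Canc1} is meant --- which is where the $\varphi$-cancellation of $f(v')-f(v)$ for $s_{ij}\geq1$ enters, exactly as in \cite{ADVW}. By contrast, the algebraic heart of the argument, the identity $\frac{d\cos\theta'}{d\cos\theta}=\frac{(1-\alpha)^2[(1-\alpha)+\alpha\cos\theta]}{\beta(\cos\theta)^3}$, is a one-line verification.
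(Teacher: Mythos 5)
Your proposal is correct and follows essentially the same route as the paper: the change of variables $v\mapsto v'$ at fixed $\sigma$ with Jacobian $\alpha^2(\alpha+(1-\alpha)\,\widehat{u}\cdot\sigma)$, the monotone angular relation $\widehat{u'}\cdot\sigma=\phi(\widehat{u}\cdot\sigma)$, and the exact cancellation $\phi'(x)/\big[\alpha^2(\alpha+(1-\alpha)x)\big]=\beta(x)^{-3}$ are precisely the steps in the paper's proof, and the truncation-plus-density argument you outline is what the paper invokes in the preamble to its appendix. The only cosmetic difference is that you parametrize the $\sigma$-integral by the post-collisional polar angle $\theta'$ and substitute back to $\theta$, whereas the paper substitutes $x=\phi^{-1}(\widehat{u}\cdot\sigma)$ directly; these are the same computation.
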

\begin{proof}
For each $\sigma$ and $v_*$ fixed we preform the change of variables $v\rightarrow v'_{ij}$.  Recall that 
$$v'= v_*+ \frac{m_i}{m_i+m_j}(v-v_*)+\frac{m_j}{m_i+m_j}|v-v_*|\sigma\,.$$
This change of variables is well defined on the set $\{\cos\theta\geq0\}$.  Indeed, it follows by a direct calculation that the Jacobian is given by
\begin{align*}
\Big|\frac{dv'}{dv}\Big|&= \alpha^2\big( \alpha + (1-\alpha) \,\widehat{u}\cdot\sigma\big)\,,\qquad \alpha=\frac{m_i}{m_i+m_j}\,,
\end{align*}
where $\widehat{u}=\frac{v-v_*}{|v-v_*|}$.  One can also find the relations between magnitude and scattering angles
\begin{align*}
&\frac{|v' - v_{*}|}{|v-v_*|} = \sqrt{\alpha^2 + (1-\alpha)^2 + 2\alpha(1-\alpha)\widehat{u}\cdot\sigma }=:\beta(\widehat{u}\cdot\sigma)\in(0,1]\,,\\
&\qquad\text{and}\qquad\widehat{u'}\cdot\sigma = \frac{\alpha\,\widehat{u}\cdot\sigma+1-\alpha}{ \sqrt{\alpha^2 + (1-\alpha)^2 + 2\alpha(1-\alpha)\widehat{u}\cdot\sigma} } =: \phi(\widehat{u}\cdot\sigma)\,,
\end{align*}
where $\widehat{u'}=\frac{v'-v_*}{|v'-v_*|}$.  Since $\Big|\frac{dv'}{dv}\Big|\geq \alpha^3>0$ in the set $\{\widehat{u}\cdot\sigma=\cos\theta\geq0\}$, the inverse transformation $v'\rightarrow \psi_\sigma(v')=v$ is, then, well defined.  Similarly, note that $\phi$ is also invertible and its derivative is given by
\begin{equation*}
\phi'(x)= \frac{\alpha^2\big(\alpha + (1-\alpha)x\big)}{ \big(\alpha^2 + (1-\alpha)^2 + 2\alpha(1-\alpha)x\big)^{\frac32} }\geq\alpha^3>0\,,\qquad x\in[0,1]\,.
\end{equation*}
Applying this change of variable to the term $\displaystyle \int B_{ij}\,f'$ in the left-hand of \eqref{Canc1} we find that
\begin{align}
\begin{split}\label{change-v-v'}
&\int_{\mathbb{R}^3\times \mathcal{S}^{2}} B_{ij}(v-v_*,\widehat{u}\cdot\sigma) f(v')dvd\sigma= \int_{\mathbb{R}^3\times \mathcal{S}^{2}} B_{ij}\big(\psi_\sigma(v')-v_*,\phi^{-1}(\widehat{u'}\cdot\sigma)\big) f(v')\Big| \frac{dv}{dv'}\Big|dv'd\sigma\\
&=\int_{\mathbb{R}^3} f(v)\int_{\phi^{-1}(\widehat{u}\cdot\sigma)\geq0}\frac{1}{\alpha^2\big( \alpha + (1-\alpha) \,\phi^{-1}(\widehat{u}\cdot\sigma)\big)} B_{ij}\Big(\frac{|u|}{\beta(\phi^{-1}(\widehat{u}\cdot\sigma))}, \phi^{-1}(\widehat{u}\cdot\sigma)\Big)d\sigma dv\,.
\end{split}
\end{align}
We renamed $v'$ to $v$ before interchanging integrals in the last step.  The inner integral is further expanded with polar coordinates performing the change of variables $x=\phi^{-1}(\widehat{u}\cdot\sigma)$.  In this way,
\begin{align*}
\int_{\phi^{-1}(\widehat{u}\cdot\sigma)\geq0}&\frac{1}{\alpha^2\big( \alpha + (1-\alpha) \,\phi^{-1}(\widehat{u}\cdot\sigma)\big)} B_{ij}\Big(\frac{|u|}{\beta(\phi^{-1}(\widehat{u}\cdot\sigma))}, \phi^{-1}(\widehat{u}\cdot\sigma)\Big)d\sigma\\
&=|\S|\int_{x\geq0}\frac{\phi'(x)}{\alpha^2\big( \alpha + (1-\alpha)x\big)} B_{ij}\Big(\frac{|u|}{\beta(x)}, x\Big)dx\\
&= |\S|\int_0^\frac{\pi}{2} \frac{\sin\theta}{\beta(\cos\theta)^3}B_{ij}\Big(\frac{|u|}{\beta(\cos\theta)}, \cos(\theta)\Big)d\theta\,.
\end{align*}
In the last step we used that
$$
\frac{\phi'(x)}{\alpha^2\big( \alpha + (1-\alpha)x\big)} = \frac{1}{\beta(x)^3}\,.
$$
Therefore, estimate \eqref{Canc1} follows with
\begin{align*}
S_{ij}(u) = |\S|\int_0^\frac{\pi}{2} \bigg[ \frac{1}{\beta(\cos\theta)^{3}}B_{ij}\big(\frac{|u|}{\beta(\cos\theta)}, \cos(\theta)\big) - B_{ij}\big(|u|, \cos(\theta)\big)\bigg]\sin\theta\,d\theta.
\end{align*}
\end{proof} 
\begin{rem}\label{cancelation-remark}
Note that for cross sections of the form $B_{ij}(|u|, \cos\theta)=|u|^{\lambda_{ij}}\cos\theta\,,$
one has the explicit form
$$
S_{ij}(u)=|\S|\,|u|^{\lambda_{ij}}\,\int_0^{\pi/2}\Big[\frac{1}{\beta(\cos\theta)^{3+\lambda_{ij}}}-1\Big]\sin\theta\,b_{ij}(\cos\theta)\,d\theta\geq0\,.
$$
\end{rem}
\subsection{Coercive estimate of the Dirichlet form}
In this subsection we explore a coercive estimate for Dirichlet form of the collision operator for monatomic gas mixtures.  The argument is inspired in the classical case developed in \cite{ADVW} and is based in a series of lemmata.

\smallskip
\noindent
Recall the weak formulation for a suitable functions $f$, $g$, and $\varphi(v)$,
\begin{equation}\label{Q+}
\int_{\mathbb{R}^3}Q_{ij}^+(f,g)\varphi(v)dv=\int_{\mathbb{R}^{3}\times \R^3\times \mathcal{S}^{2}}B_{ij}(v-v_*,\sigma)g(v_*)f(v)\varphi(v'_{ij})dvdv_*d\sigma.
\end{equation}
\begin{lem}[Bobylev identity]\label{P0}
Let $f\in L^2(\R^3)$ and $g\in L^1(\R^3)$.  For
\begin{equation*}
B_{ij}(v-v_*,\sigma) = b_{ij}\Big(\frac{v-v_*}{|v-v_*|}\cdot\sigma\Big)\,,
\end{equation*}
it holds that
$$\mathcal{F}\big( Q^+_{ij}(f,g)\big)(\xi)=\int_{\mathcal{S}^{2}}\mathcal{F}(g)(\xi^-_{ij})\mathcal{F}(f)(\xi^+_{ij})b_{ij}\Big(\frac{\xi}{|\xi|}\cdot\sigma\Big)d\sigma\,,\qquad 1\leq i,j\leq I\,,$$
where $\xi = \xi^+_{ij} + \xi^-_{ij}$.  More explicitly,
$$
\xi^+_{ij}:=\frac{m_i\, \xi}{m_i + m_i} + \frac{m_j \,|\xi| \,\sigma}{m_i+m_j} \qquad\text{and}\qquad \xi^-_{ij}:=\frac{m_j\, \xi}{m_i + m_i} - \frac{m_j \,|\xi| \,\sigma}{m_i+m_j}\,.
$$
\end{lem}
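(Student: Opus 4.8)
The plan is to reproduce the classical Bobylev computation of \cite{ADVW}, the only new ingredient being the mass ratio $\frac{m_j}{m_i+m_j}$, which enters merely as a scalar factor. Fix $1\le i,j\le I$ and insert the test function $\varphi(v)=e^{-iv\cdot\xi}$ into the weak identity \eqref{Q+}, using the Fourier convention $\mathcal{F}(h)(\xi)=\int_{\R^3}h(v)e^{-iv\cdot\xi}\,dv$ (other normalizations only affect overall constants). First I would perform the unimodular change of variables $(v,v_*)\mapsto(w,u)$ with $w=\frac{m_iv+m_jv_*}{m_i+m_j}$ and $u=v-v_*$, whose inverse is $v=w+\frac{m_j}{m_i+m_j}u$, $v_*=w-\frac{m_i}{m_i+m_j}u$, and which has the crucial property $v'_{ij}=w+\frac{m_j}{m_i+m_j}|u|\sigma$. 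Hence $e^{-i\xi\cdot v'_{ij}}=e^{-i\xi\cdot w}\,e^{-i\frac{m_j}{m_i+m_j}|u|\,\xi\cdot\sigma}$, and after this substitution \eqref{Q+} reads as an integral over $(w,u,\sigma)$ in which the only $\sigma$--dependent factor is $\int_{\S^2}b_{ij}(\widehat{u}\cdot\sigma)\,e^{-i\frac{m_j}{m_i+m_j}|u|\,\xi\cdot\sigma}\,d\sigma$.

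The heart of the argument is the elementary symmetry lemma: for $a\in\R^3$ and $e\in\S^2$ the quantity $\Phi(a,e):=\int_{\S^2}b_{ij}(e\cdot\sigma)\,e^{-ia\cdot\sigma}\,d\sigma$ depends on $a$ and $e$ only through $|a|$ and $a\cdot e$. Indeed, any two pairs $(a,e)$ and $(a',e')$ with $|a|=|a'|$, $|e|=|e'|=1$ and $a\cdot e=a'\cdot e'$ have identical Gram matrices and are therefore related by an orthogonal map $R\in O(3)$; since $d\sigma$ is $O(3)$--invariant, the substitution $\sigma\mapsto R^{-1}\sigma$ gives $\Phi(a,e)=\Phi(a',e')$. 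I would then apply this with the two choices $(a,e)=\bigl(\tfrac{m_j}{m_i+m_j}|u|\,\xi,\ \widehat{u}\bigr)$ and $(a,e)=\bigl(\tfrac{m_j}{m_i+m_j}|\xi|\,u,\ \widehat{\xi}\bigr)$; both produce $|a|=\tfrac{m_j}{m_i+m_j}|u||\xi|$ and $a\cdot e=\tfrac{m_j}{m_i+m_j}\,u\cdot\xi$, so
\[
\int_{\S^2}b_{ij}(\widehat{u}\cdot\sigma)\,e^{-i\frac{m_j}{m_i+m_j}|u|\,\xi\cdot\sigma}\,d\sigma=\int_{\S^2}b_{ij}(\widehat{\xi}\cdot\sigma)\,e^{-i\frac{m_j}{m_i+m_j}|\xi|\,u\cdot\sigma}\,d\sigma .
\]

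Substituting this identity, using Fubini to bring the $\sigma$--integral outside, and reversing the change of variables $(w,u)\mapsto(v,v_*)$, the phase regroups as
\[
-\,\frac{m_iv+m_jv_*}{m_i+m_j}\cdot\xi-\frac{m_j}{m_i+m_j}|\xi|\,(v-v_*)\cdot\sigma=-\,v\cdot\xi^{+}_{ij}-v_*\cdot\xi^{-}_{ij},
\]
with $\xi^{+}_{ij}=\frac{m_i}{m_i+m_j}\xi+\frac{m_j}{m_i+m_j}|\xi|\sigma$ and $\xi^{-}_{ij}=\frac{m_j}{m_i+m_j}\xi-\frac{m_j}{m_i+m_j}|\xi|\sigma$, so that $\xi^{+}_{ij}+\xi^{-}_{ij}=\xi$. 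The remaining $(v,v_*)$--integral then factors into $\mathcal{F}(f)(\xi^{+}_{ij})\,\mathcal{F}(g)(\xi^{-}_{ij})$, which is exactly the asserted formula.

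I do not expect a genuine obstacle here; the only points requiring attention are bookkeeping the mass weights through the two linear changes of variables and justifying Fubini. As throughout this appendix, one first assumes $b_{ij}\in L^1(\S^2)$, for which $Q^{+}_{ij}(f,g)\in L^2$ whenever $f\in L^2$ and $g\in L^1$ by a standard $L^2$--$L^1$ estimate for the gain term, $\mathcal{F}(g)\in L^\infty$, and $\xi\mapsto\xi^{+}_{ij}$ is a change of variables with Jacobian bounded below on the forward set $\{\widehat{\xi}\cdot\sigma\ge0\}$, so that both sides are well-defined functions of $\xi$; the singular kernels of \eqref{B2} are then recovered by density.
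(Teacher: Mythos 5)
Your proposal is correct and follows essentially the same route as the paper: insert $\varphi(v)=e^{-iv\cdot\xi}$ into the weak form, apply Bobylev's symmetry identity $\int_{\mathcal{S}^2}b_{ij}(\widehat{u}\cdot\sigma)e^{-i\frac{m_j}{m_i+m_j}|u|\,\xi\cdot\sigma}d\sigma=\int_{\mathcal{S}^2}b_{ij}(\widehat{\xi}\cdot\sigma)e^{-i\frac{m_j}{m_i+m_j}|\xi|\,u\cdot\sigma}d\sigma$, and regroup the phase so the $(v,v_*)$-integral factors into $\mathcal{F}(f)(\xi^+_{ij})\mathcal{F}(g)(\xi^-_{ij})$. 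The only differences are cosmetic: you pass through center-of-mass/relative variables where the paper works with $v'_{ij}$ directly, and you supply the $O(3)$-invariance justification of the symmetry identity that the paper merely asserts as "a key remark by Bobylev."
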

\begin{proof}
Plugging $\varphi(v)=e^{-iv\cdot\xi}$ in the weak formulation \eqref{Q+}, we get that
\begin{align*}
\mathcal{F}\big(Q_{ij}^+(f,g)\big)(\xi)&=\int_{\mathbb{R}^{3}\times\R^3\times S^{2}}g(v_*)f(v)B_{ij}(v-v_*,\sigma)e^{-iv'_{ij}\cdot\xi}dvdv_*d\sigma\\
&=\int_{\mathbb{R}^{3}\times \R^3\times S^{2}}g(v_*)f(v)B_{ij}(v-v_*,\sigma)e^{-i\frac{m_iv+m_jv_*}{m_i+m_j}\cdot\xi} e^{-\frac{m_j}{m_i+m_j}|v-v_*|\sigma\cdot\xi}dvdv_*d\sigma\,.
\end{align*}
Note that, a key remark by Bobylev,
$$\int_{\mathcal{S}^{2}}b_{ij}\Big(\frac{v-v_*}{|v-v_*|}\cdot\sigma\Big)e^{-i\frac{m_j}{m_i+m_j}|v-v_*|\sigma\cdot\xi}d\sigma=\int_{\mathcal{S}^{2}}b_{ij}\Big(\frac{\xi}{|\xi|}\cdot\sigma\Big)e^{-i\frac{m_j}{m_i+m_j}|\xi|\sigma\cdot(v-v_*)}d\sigma\,. $$
Thus,
\begin{align*}
\mathcal{F}&\big(Q_{ij}^+(f, g)\big)(\xi)\\
&=\int_{\mathbb{R}^{3}\times\R^3\times \mathcal{S}^{2}}g(v_*)f(v)b_{ij}\Big(\frac{\xi}{|\xi|}\cdot\sigma\Big)e^{-i\xi\cdot\frac{m_iv+m_jv_*}{m_i+m_j}}e^{-i\frac{m_j}{m_i+m_j}|\xi|\sigma\cdot(v-v_*)}dvdv_*d\sigma\\
&=\int_{\mathcal{S}^{2}}\bigg(\int_{\mathbb{R}^{3}}f(v)e^{-iv\cdot\xi^+_{ij}}dv\bigg)\bigg(\int_{\R^3}g(v_*)e^{-iv_*\cdot\xi^-_{ij}}dv_*\bigg)b_{ij}\Big(\frac{\xi}{|\xi|}\cdot\sigma\Big)d\sigma\,.
\end{align*}
The result follows from here.
\end{proof}
\begin{lem}\label{P1}
Let $g\in L^1(\R^3)$ and $f \in L^2(\R^3)$.  Then,
\begin{align*}
\int_{\mathbb{R}^{2N}}&\int_{\mathcal{S}^2}b_{ij}(\widehat{u}\cdot\sigma)g(v_*)\big( f(v'_{ij})-f(v) \big)^2dvdv_*d\sigma\\
&=\frac{1}{(2\pi)^3}\int_{\R^3}\int_{\mathcal{S}^2} b_{ij}\Big(\frac{\xi}{|\xi|}\cdot\sigma\Big)\Big[\mathcal{F}(g)(0)\,|\mathcal{F}(f)(\xi)|^2+\mathcal{F}(g)(0)\,|\mathcal{F}(f)(\xi^+_{ij})|^2\\
&\qquad\qquad-\mathcal{F}(g)(\xi^-_{ij})\,\mathcal{F}(f)(\xi^+_{ij})\,\overline{\mathcal{F}(f)}(\xi) -\overline{\mathcal{F}(g)}(\xi^-_{ij})\, \overline{\mathcal{F}(f)}(\xi^+_{ij})\,\mathcal{F}(f)(\xi)\Big]d\xi d\sigma,
\end{align*}
with the same definitions of $\xi^\pm_{ij}$ of Lemma \ref{P0}.
\end{lem}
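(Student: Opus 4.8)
The plan is to expand the square and evaluate the three resulting terms using Plancherel's theorem, the Bobylev identity of Lemma \ref{P0}, and the change of variables $v\mapsto v'_{ij}$ from the Cancellation Lemma \ref{P-1}. As in the other results of this subsection, I would first replace $b_{ij}$ by an integrable approximation so that the triple integral splits and Fubini applies, and pass to the limit at the end; the assumptions $g\in L^1(\R^3)$ and $f\in L^2(\R^3)$ make every use of Plancherel legitimate. Writing
\[
\big(f(v'_{ij})-f(v)\big)^2 = f(v'_{ij})^2 - 2\,f(v'_{ij})f(v) + f(v)^2,
\]
the left-hand side becomes $J_1 - 2\,J_2 + J_3$ with the obvious meaning for each $J$.

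The term $J_3=\int b_{ij}(\widehat{u}\cdot\sigma)g(v_*)f(v)^2$ is immediate: integrating first in $\sigma$, the quantity $\int_{\mathbb{S}^{2}}b_{ij}(\widehat{u}\cdot\sigma)\,d\sigma$ is independent of $\widehat u$ by rotational invariance and equals $\int_{\mathbb{S}^{2}}b_{ij}\big(\tfrac{\xi}{|\xi|}\cdot\sigma\big)\,d\sigma$ for every $\xi\neq0$; combined with $\int g=\mathcal{F}(g)(0)$ and $\int f^{2}=(2\pi)^{-3}\int|\mathcal{F}(f)|^{2}$ this reproduces the first term of the statement. For $J_2$, the weak formulation \eqref{Q+} with test function $\varphi=f$ and kernel $B_{ij}(v-v_*,\sigma)=b_{ij}(\widehat u\cdot\sigma)$ gives $J_2=\int_{\R^{3}}Q^{+}_{ij}(f,g)(v)f(v)\,dv$; applying Plancherel and then Lemma \ref{P0},
\[
J_2=\frac{1}{(2\pi)^{3}}\int_{\R^{3}}\int_{\mathbb{S}^{2}}b_{ij}\Big(\tfrac{\xi}{|\xi|}\cdot\sigma\Big)\,\mathcal{F}(g)(\xi^{-}_{ij})\,\mathcal{F}(f)(\xi^{+}_{ij})\,\overline{\mathcal{F}(f)(\xi)}\;d\sigma\,d\xi .
\]
Since $f$ and $g$ are real, $J_2$ is real, so $J_2=\tfrac12\big(J_2+\overline{J_2}\big)$, and $-2J_2$ is precisely the sum of the last two terms of the statement.

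It remains to treat $J_1=\int b_{ij}(\widehat u\cdot\sigma)g(v_*)f(v'_{ij})^{2}$, which is where the computation is slightly more delicate. Fixing $v_*$ and $\sigma$ and restricting to $\{\widehat u\cdot\sigma\ge0\}$ (the support of $b_{ij}$, by the forward-scattering hypothesis), I would change variables $v\mapsto w=v'_{ij}$: by the proof of Lemma \ref{P-1} this is a diffeomorphism onto its image, with Jacobian $\alpha^{2}\big(\alpha+(1-\alpha)\widehat u\cdot\sigma\big)$, $\alpha=\tfrac{m_i}{m_i+m_j}$, and with $|w-v_*|=\beta(\widehat u\cdot\sigma)|v-v_*|$, $\widehat{w-v_*}\cdot\sigma=\phi(\widehat u\cdot\sigma)$. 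Carrying out afterwards the $\sigma$-integration in polar coordinates around $\widehat{w-v_*}$ and then substituting $x=\phi^{-1}(\,\cdot\,)$ exactly as in Lemma \ref{P-1}, the kernel collapses to an explicit constant $\Lambda_{ij}$ depending only on $b_{ij}$ and the masses, whence $J_1=\mathcal{F}(g)(0)\,\Lambda_{ij}\int_{\R^{3}}f^{2}\,dv$. Finally, the corresponding term on the right-hand side, $(2\pi)^{-3}\mathcal{F}(g)(0)\int_{\R^{3}}\int_{\mathbb{S}^{2}}b_{ij}\big(\tfrac{\xi}{|\xi|}\cdot\sigma\big)\,|\mathcal{F}(f)(\xi^{+}_{ij})|^{2}\,d\sigma\,d\xi$, is evaluated by performing the very same change of variables in the frequency variable: for fixed $\sigma$ the map $\xi\mapsto\eta=\xi^{+}_{ij}(\xi,\sigma)=\tfrac{m_i}{m_i+m_j}\xi+\tfrac{m_j}{m_i+m_j}|\xi|\sigma$ has Jacobian $\alpha^{2}\big(\alpha+(1-\alpha)\widehat\xi\cdot\sigma\big)$ and satisfies $|\eta|=\beta(\widehat\xi\cdot\sigma)|\xi|$, $\widehat\eta\cdot\sigma=\phi(\widehat\xi\cdot\sigma)$ — exactly the relations holding in physical space — so this term equals $(2\pi)^{-3}\mathcal{F}(g)(0)\,\Lambda_{ij}\int|\mathcal{F}(f)|^{2}=\mathcal{F}(g)(0)\,\Lambda_{ij}\int f^{2}\,dv=J_1$. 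Collecting $J_1-2J_2+J_3$ yields the asserted identity.

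The main obstacle is the bookkeeping forced by the lack of symmetry of the mixture collision rules, namely $\alpha=\tfrac{m_i}{m_i+m_j}\neq\tfrac12$ in general: one must check that $v\mapsto v'_{ij}$ and its Fourier analogue $\xi\mapsto\xi^{+}_{ij}$ are honest diffeomorphisms on the relevant forward regions and that the Jacobian $\alpha^{2}(\alpha+(1-\alpha)\,\cdot\,\sigma)$ together with the angular substitution $\phi$ behave identically on the velocity and on the frequency sides — this is exactly what makes $J_1$ and its Fourier counterpart collapse to one and the same constant $\Lambda_{ij}$. Apart from this, the argument is the classical one of \cite{ADVW}, carried out with the asymmetric weights $\xi^{\pm}_{ij}$ of Lemma \ref{P0}.
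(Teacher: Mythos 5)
Your proposal is correct and follows essentially the same route as the paper: the same expansion of the square, Plancherel plus Bobylev's identity (Lemma \ref{P0}) for the cross term together with the reality observation, rotational invariance of $\int_{\mathcal{S}^2}b_{ij}$ for the $f(v)^2$ term, and the change of variables $v\mapsto v'_{ij}$ from the Cancellation Lemma for the $f(v'_{ij})^2$ term. The only cosmetic difference is that you evaluate both $J_1$ and its Fourier-side counterpart to a common constant $\Lambda_{ij}$, whereas the paper passes through Plancherel and then undoes the change of variables $\xi\mapsto\xi^{+}_{ij}$ on the frequency side; these are the same computation.
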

\begin{proof}
Expanding the quadratic term gives three terms, namely,
$$\big( f(v'_{ij})-f(v) \big)^2 = f(v'_{ij})^2+f(v)^2-2f(v'_{ij})f(v).$$
We begin with the middle term. By the pre-post collisional change of variables and Parseval's identity,
\begin{align*}
\int_{\R^3\times \R^3\times \S^2} b_{ij}(\widehat{u}\cdot\sigma)g(v_*)f(v)f(v'_{ij})dvdv_*d\sigma&=\int_{\R^3} Q^+_{ij}(f,g)\,f\,dv\\
&= \frac{1}{(2\pi)^3}\int_{\R^3} \mathcal{F}\big[Q^+_{ij}(f,g)\big]\overline{\mathcal{F}(f)}d\xi.
\end{align*}
Furthermore, using Bobylev's identity Lemma \ref{P0} in the right-side it holds that 
\begin{align*}
\int_{\R^3\times \R^3\times \S^2} b_{ij}(\widehat{u}\cdot\sigma)g(v_*)&f(v)f(v'_{ij})dvdv_*d\sigma\\
&=\frac{1}{(2\pi)^3}\int_{\R^3\times \S^2} b_{ij}\big(\widehat{\xi}\cdot\sigma\big)\mathcal{F}(g)(\xi^-_{ij})\mathcal{F}(f)(\xi^+_{ij})\overline{\mathcal{F}(f)}(\xi)d\xi d\sigma.
\end{align*}
The right-side is equal to its complex conjugate since the left-side is real valued.  Now, we note that $\displaystyle\int_{\mathcal{S}^{2}}b_{ij}(\widehat{u}\cdot\sigma)d\sigma$ does not depend on the unit vector $\widehat{u}\in\mathcal{S}^2$, consequently it follows that
\begin{align*}
\int_{\R^3\times \R^3\times \S^2} b_{ij}(\widehat{u}\cdot\sigma)g(v_*)f(v)^2dvdv_*d\sigma&=\int_{\S^2} b_{ij}(\widehat{u}\cdot\sigma)d\sigma \int_{\R^3} g(v_*)dv_*\int_{\R^3} f(v)^2dv\\
&=\frac{1}{(2\pi)^3}\int_{\S^2} b_{ij}\big(\widehat{\xi}\cdot\sigma\big)d\sigma\mathcal{F}(g)(0)\int_{\R^3}|\mathcal{F}(f)(\xi)|^2 d\xi,
\end{align*}
where we have applied the usual Plancherel identity.  Finally, for the term involving $f(v')^2$ we first make the change of variables $(v,v_*)\rightarrow (v-v_*, v_*)$, and then $v\rightarrow  v^{+}_{ij}$ (and rename as $v$) as in Cancellation lemma to obtain that
\begin{align*}
\int_{\R^3\times \R^3\times \S^2}&  b_{ij}(\widehat{u}\cdot\sigma) g(v_*)f(v'_{ij})^2\,d\sigma\,dv\,dv_*=\int_{\R^3\times \S^2\times \R^3} b_{ij}(\widehat{v}\cdot\sigma)g(v_*)\big| f(v^{+}_{ij}+v_*)\big|^2 dvd\sigma dv_*\\
&= \int_{\R^3} g(v_*) \int_{\S^2\times \R^3}\frac{b_{ij}\big( \phi^{-1}(\widehat{v}\cdot\sigma) \big)}{\alpha^2\big( \alpha + (1-\alpha) \,\phi^{-1}(\widehat{v}\cdot\sigma)\big)}\big|\tau_{-v_*}f(v)\big|^2dvd\sigma dv_*\\
&=\frac{1}{(2\pi)^3}\int_{\R^3} g(v_*)dv_*\,\int_{\R^3\times \S^2} \frac{b_{ij}\big( \phi^{-1}(\widehat{\xi}\cdot\sigma) \big)}{\alpha^2\big( \alpha + (1-\alpha) \,\phi^{-1}(\widehat{\xi}\cdot\sigma)\big)}|\mathcal{F}(f)(\xi)|^2d\xi d\sigma\\
&=\frac{1}{(2\pi)^3}\mathcal{F}(g)(0)\int_{\R^3\times \S^2} b_{ij}(\widehat{\xi}\cdot\sigma)\big|\mathcal{F}(f)(\xi^+_{ij})\big|^2d\xi d\sigma,
\end{align*}
which achieve the proof of Lemma \ref{P1} after adding the three respective terms.
\end{proof}
\begin{cor}\label{cor-coercive}
Let $f\in L^2(\R^3)$ and $g\in L^1(\R^3)$ with $g\geq0$.  Then, 
\begin{align*}
&\int_{\mathbb{R}^{3}\times\R^3}\int_{S^{2}}b_{ij}(\widehat{u}\cdot\sigma)g(v_*)\big( f(v'_{ij})-f(v) \big)^2dvdv_*d\sigma\\
&\qquad \geq\frac{1}{(2\pi)^3}\int_{\mathbb{R}^3}|\mathcal{F}(f)(\xi)|^2\,\int_{S^{2}} b_{ij}\Big(\frac{\xi}{|\xi|}\cdot\sigma\Big)\Big[\mathcal{F}(g)(0)-\big|\mathcal{F}(g)(\xi^-_{ij})\big|\Big]d\sigma d\xi \,.
\end{align*}
\end{cor}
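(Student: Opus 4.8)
The plan is to start from the exact Fourier-side identity provided by Lemma \ref{P1} and then discard the manifestly nonnegative pieces while bounding the mixed (cross) term from below by Cauchy--Schwarz. Write the four terms appearing inside the bracket in Lemma \ref{P1} as $T_1 + T_2 - T_3 - T_4$, where $T_1 = \mathcal{F}(g)(0)|\mathcal{F}(f)(\xi)|^2$, $T_2 = \mathcal{F}(g)(0)|\mathcal{F}(f)(\xi^+_{ij})|^2$, and $T_3, T_4$ are the two complex-conjugate cross terms $\mathcal{F}(g)(\xi^-_{ij})\mathcal{F}(f)(\xi^+_{ij})\overline{\mathcal{F}(f)}(\xi)$ and its conjugate. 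Since $g\geq 0$ we have $\mathcal{F}(g)(0) = \int g \geq 0$, so both $T_1$ and $T_2$ are nonnegative and $b_{ij}(\widehat{\xi}\cdot\sigma)\geq 0$ by \eqref{B2}.

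First I would dispose of $T_2$: drop it from the integrand, using $b_{ij}\geq 0$ and $\mathcal{F}(g)(0)\geq 0$, which only decreases the right-hand side. Next, for the cross terms $T_3+T_4 = 2\,\mathrm{Re}\big(\mathcal{F}(g)(\xi^-_{ij})\mathcal{F}(f)(\xi^+_{ij})\overline{\mathcal{F}(f)}(\xi)\big)$, I would estimate
$$
T_3 + T_4 \leq 2\,\big|\mathcal{F}(g)(\xi^-_{ij})\big|\,\big|\mathcal{F}(f)(\xi^+_{ij})\big|\,\big|\mathcal{F}(f)(\xi)\big|
\leq \big|\mathcal{F}(g)(\xi^-_{ij})\big|\Big(\big|\mathcal{F}(f)(\xi^+_{ij})\big|^2 + \big|\mathcal{F}(f)(\xi)\big|^2\Big),
$$
by the elementary inequality $2ab\leq a^2+b^2$. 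Then $T_1 + T_2 - T_3 - T_4 \geq \big(\mathcal{F}(g)(0) - |\mathcal{F}(g)(\xi^-_{ij})|\big)|\mathcal{F}(f)(\xi)|^2 + \big(\mathcal{F}(g)(0) - |\mathcal{F}(g)(\xi^-_{ij})|\big)|\mathcal{F}(f)(\xi^+_{ij})|^2$, and since $\mathcal{F}(g)(0) = \int g \geq \big|\int g\, e^{-iv_*\cdot\xi^-_{ij}}\big| = |\mathcal{F}(g)(\xi^-_{ij})|$ (again using $g\geq0$), the coefficient of $|\mathcal{F}(f)(\xi^+_{ij})|^2$ is nonnegative and that whole term can be dropped. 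Integrating the surviving lower bound $\big(\mathcal{F}(g)(0) - |\mathcal{F}(g)(\xi^-_{ij})|\big)|\mathcal{F}(f)(\xi)|^2\, b_{ij}(\widehat{\xi}\cdot\sigma)$ over $\xi\in\R^3$ and $\sigma\in\S^2$, and dividing by $(2\pi)^3$, yields exactly the claimed inequality.

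The only subtlety — and the one step worth double-checking rather than an actual obstacle — is making sure every discarded term genuinely has the right sign under the integral. This hinges entirely on the three facts $g\geq0$ (hence $\mathcal{F}(g)(0)\geq0$ and $\mathcal{F}(g)(0)\geq|\mathcal{F}(g)(\eta)|$ for every $\eta$), $b_{ij}\geq0$ pointwise, and the pointwise Cauchy--Schwarz bound on $T_3+T_4$; once these are in place the argument is a direct term-by-term comparison inside the integral of Lemma \ref{P1}, with no cancellation or Fubini issue beyond what is already justified there. No extra hypotheses on $f$ are needed beyond $f\in L^2$, which guarantees $\mathcal{F}(f)\in L^2$ so all integrands are integrable, and the left-hand integral is well defined (possibly $+\infty$) because the integrand is nonnegative.
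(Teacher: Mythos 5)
Your argument is correct and is essentially the paper's own proof: both start from the Fourier identity of Lemma \ref{P1}, bound the two cross terms by $\big|\mathcal{F}(g)(\xi^-_{ij})\big|\big(|\mathcal{F}(f)(\xi)|^2+|\mathcal{F}(f)(\xi^+_{ij})|^2\big)$ via the elementary $2ab\leq a^2+b^2$, and then discard the $|\mathcal{F}(f)(\xi^+_{ij})|^2$ contribution using $\mathcal{F}(g)(0)\geq|\mathcal{F}(g)(\xi^-_{ij})|$ for $g\geq0$. Nothing further is needed.
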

\begin{proof}
In Lemma \ref{P1} use Cauchy-Schwarz inequality to control the latter two terms as
\begin{align*}
\mathcal{F}(g)(\xi^-_{ij})\,\mathcal{F}(f)(\xi^+_{ij})\,\overline{\mathcal{F}(f)}(\xi) +\overline{\mathcal{F}(g)}(\xi^-_{ij})\, \overline{\mathcal{F}(f)}(\xi^+_{ij})\,\mathcal{F}(f)(\xi)\\
\leq \big|\mathcal{F}(g)(\xi^-_{ij})\big|\Big(\big|\mathcal{F}(f)(\xi)\big|^{2}+\big|\mathcal{F}(f)(\xi^+_{ij})\big|^{2}\Big)\,.
\end{align*}
The result follows after observing that for $g\geq0$ one has that $\mathcal{F}(g)(0) - \big|\mathcal{F}(g)(\xi^-_{ij})\big|\geq0$.
\end{proof}
\begin{lem}\label{Kij}
Suppose that $b_{ij}$ satisfies assumption (\ref{B1}) and (\ref{B2}).  Then, there exists a constant $K^{ij}>0$ such that
$$
\int_{\mathcal{S}^{2}}b_{ij}\Big(\frac{\xi}{|\xi|}\cdot\sigma\Big)\Big[\mathcal{F}(g)(0)-|\mathcal{F}(g)(\xi^-_{ij})|\Big]d\sigma\geq  K^{ij}\big(|\xi|^2 \wedge |\xi|^{s_{ij}} \big)\,.
$$
The constant can be taken as $K^{ij} := K^{ij}(g)=\Big(\frac{m_j}{m_i+m_j}\Big)^2\frac{1}{2-s_{ij}}\,\kappa^1_{ij} \, C_g\,|\S|$ with the constant $C_{g}>0$ depending only on $ \|g\|_{L^1_1}$ and $\|g\|_{L \log L}$.
\end{lem}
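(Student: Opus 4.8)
The approach is that of \cite{ADVW}, adapted to the asymmetric kernel $\xi^{-}_{ij}$ produced by Bobylev's identity (Lemma \ref{P0}). First I would record that $\xi^{-}_{ij}=\frac{m_j}{m_i+m_j}\big(\xi-|\xi|\sigma\big)$, so that, with $\cos\theta=\frac{\xi}{|\xi|}\cdot\sigma$,
\begin{align*}
|\xi^{-}_{ij}|^2=2\Big(\tfrac{m_j}{m_i+m_j}\Big)^2|\xi|^2\,(1-\cos\theta)\,.
\end{align*}
Hence the bracket $\mathcal{F}(g)(0)-|\mathcal{F}(g)(\xi^{-}_{ij})|$ depends on $\sigma$ only through $\theta$, is nonnegative (since $g\ge0$ gives $\mathcal{F}(g)(0)\ge|\mathcal{F}(g)(\cdot)|$), and after integrating the azimuthal variable the left-hand side of the claim equals a fixed constant times $\int_0^{\pi}\beta_{ij}(\theta)\big[\mathcal{F}(g)(0)-|\mathcal{F}(g)(\xi^{-}_{ij})|\big]\,d\theta$, with $\beta_{ij}$ as in \eqref{B2}. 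Because the integrand is nonnegative I may freely discard part of the $\theta$-range when proving a lower bound.

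The analytic heart is a \emph{scalar non-concentration estimate}: there is $C_g>0$, depending only on $\|g\|_{L^1_1}$, $\|g\|_{L\log L}$ (and a lower bound on $\|g\|_{L^1}$), with
\begin{align*}
\mathcal{F}(g)(0)-|\mathcal{F}(g)(\eta)|\ \ge\ C_g\,|\eta|^2\,,\qquad |\eta|\le 1\,.
\end{align*}
To obtain it I would use that, for $g\ge0$ and $h(v):=g(-v)$, $|\mathcal{F}(g)(\eta)|^2=\mathcal{F}(g\ast h)(\eta)$ with $g\ast h\ge0$ and even, whence $\mathcal{F}(g)(0)^2-|\mathcal{F}(g)(\eta)|^2=\int_{\R^3}(g\ast h)(w)\big(1-\cos(w\cdot\eta)\big)\,dw$. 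I would then choose a radius $R_0\le\pi$ — controlled by $\|g\|_{L^1_1}$ through $\int(g\ast h)(w)|w|\,dw\le2\|g\|_{L^1}\|g\|_{L^1_1}$ and Markov's inequality — so that $\int_{|w|\le R_0}(g\ast h)\ge\frac12\|g\|_{L^1}^2$, use $1-\cos t\ge c(R_0)t^2$ on $|t|\le R_0$, restrict to $|\eta|\le1$ (so $|w\cdot\eta|\le R_0$ on the relevant set), bound the integral below by $c(R_0)\,\eta^{\mathrm T}M\eta$ with $M:=\int_{|w|\le R_0}(g\ast h)(w)\,w\otimes w\,dw$, and divide by $\mathcal{F}(g)(0)+|\mathcal{F}(g)(\eta)|\le2\|g\|_{L^1}$. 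What remains is a uniform-in-direction lower bound $\inf_{|e|=1}\int_{|w|\le R_0}(g\ast h)(w)(w\cdot e)^2\,dw\ge c>0$, i.e. quantitative positivity $\lambda_{\min}(M)>0$; this is precisely where $\|g\|_{L\log L}$ enters, via the logarithmic-rate uniform integrability that forbids $g$ — and hence $g\ast h$ — from concentrating near a hyperplane through the origin, exactly as in \cite{ADVW}.

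Finally I would conclude by restricting the $\sigma$-integral to the cap $\theta\in(0,\theta_\star)$, $\theta_\star:=\min\{\theta_0,|\xi|^{-1}\}$, where $\theta_0\le\pi/2$ is the scale on which \eqref{B2} is in force and where $1-\cos\theta\le\frac12\theta^2$ together with $1-\cos\theta\ge\frac{4}{\pi^2}\theta^2$. On that cap $|\xi^{-}_{ij}|\le\frac{m_j}{m_i+m_j}<1$, so the scalar estimate applies with $\eta=\xi^{-}_{ij}$, and with $\beta_{ij}(\theta)\ge\kappa^1_{ij}\theta^{-s_{ij}-1}$ one gets, schematically,
\begin{align*}
\int_0^{\pi}\beta_{ij}(\theta)\big[\mathcal{F}(g)(0)-|\mathcal{F}(g)(\xi^{-}_{ij})|\big]\,d\theta\ \gtrsim\ C_g\,\kappa^1_{ij}\Big(\tfrac{m_j}{m_i+m_j}\Big)^2|\xi|^2\int_0^{\theta_\star}\theta^{1-s_{ij}}\,d\theta\ =\ \frac{C_g\,\kappa^1_{ij}}{2-s_{ij}}\Big(\tfrac{m_j}{m_i+m_j}\Big)^2|\xi|^2\,\theta_\star^{\,2-s_{ij}}\,,
\end{align*}
which is $\gtrsim|\xi|^2$ when $|\xi|\le\theta_0^{-1}$ and $\gtrsim|\xi|^{s_{ij}}$ when $|\xi|>\theta_0^{-1}$ (since then $\theta_\star=|\xi|^{-1}$ and $|\xi|^2\theta_\star^{\,2-s_{ij}}=|\xi|^{s_{ij}}$); together this is $K^{ij}\big(|\xi|^2\wedge|\xi|^{s_{ij}}\big)$ with $K^{ij}$ of the announced form, the universal and $\theta_0$-dependent prefactors being absorbed into $C_g$. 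The only genuinely delicate point is the quantitative positivity of $M$ in the middle step; everything else is routine manipulation of the angular singularity.
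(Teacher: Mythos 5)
Your proposal is correct and follows essentially the same route as the paper: reduce to the scalar non-concentration bound $\mathcal{F}(g)(0)-|\mathcal{F}(g)(\eta)|\geq C_g\,(|\eta|^2\wedge 1)$, insert $|\xi^-_{ij}|^2=2\big(\tfrac{m_j}{m_i+m_j}\big)^2|\xi|^2(1-\cos\theta)$, and lower-bound the angular integral via $\beta_{ij}(\theta)\geq\kappa^1_{ij}\theta^{-s_{ij}-1}$ with the split at $\theta\sim|\xi|^{-1}$ (your truncation to the cap $\theta<\theta_\star$ is the same computation as the paper's $\big(|\xi|^2\theta^2\wedge 1\big)$ integral). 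The only difference is that you sketch a proof of the scalar estimate, which the paper simply cites as \cite[Lemma 3]{ADVW}, and your sketch is consistent with that reference (noting, as you do, that a lower bound on the mass of $g$ is implicitly needed).
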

\begin{proof}
Using \cite[Lemma 3]{ADVW}, there exists a constant $C_g$ depending on $\|g\|_{L^1_1}$ and $\|g\|_{L \log L}$ such that
$$
\mathcal{F}(g)(0)-|\mathcal{F}(g)(\xi^-_{ij})|\geq C_g\,\big(|\xi^-_{ij}|^2\wedge 1\big)\geq0\,.
$$
Then,
$$
\int_{\mathcal{S}^{2}}b_{ij}\Big(\frac{\xi}{|\xi|}\cdot\sigma\Big)\big(\mathcal{F}(g_j)(0)-|\mathcal{F}(g_j)(\xi^-_{ij})|\big)d\sigma\geq C_g\int_{\mathcal{S}^{2}}b_{ij}\Big(\frac{\xi}{|\xi|}\cdot\sigma\Big)\big(|\xi^-_{ij}|^2\wedge 1\big)d\sigma\,.
$$
We note that 
$$
|\xi^-_{ij}|^2=2 \Big(\frac{m_j}{m_i+m_j}\Big)^2|\xi|^2\Big(1-\frac{\xi}{|\xi|}\cdot\sigma\Big)\,.
$$
Thus, using spherical coordinates and recalling assumption \eqref{B2} it follows that
\begin{align*}
\int_{\mathcal{S}^{2}}b_{ij}\Big(\frac{\xi}{|\xi|}\cdot\sigma\Big)\big( |\xi^-_{ij}|^2\wedge 1\big)d\sigma&= \Big(\frac{m_j}{m_i+m_j}\Big)^2|\S|\int_0^{\frac{\pi}{2}}\sin\theta\, b_{ij}(\cos\theta)\,\big[|\xi|^2(1-\cos\theta)\wedge 1\big]d\theta\\
&\geq \Big(\frac{m_j}{m_i+m_j}\Big)^2\kappa^{1}_{ij} |\S|\int_0^{\frac\pi2}\big(|\xi|^2\theta^2\wedge1\big)\frac{d\theta}{\theta^{1+s_{ij}}}\,.
\end{align*}
Using the change of variables $\tilde\theta = |\xi|\theta$, this latter integral can be estimated as
$$
|\xi|^{s_{ij}}\int_0^{\frac\pi2|\xi|} \big(\tilde{\theta}^2\wedge1\big)\frac{d\tilde\theta}{\tilde{\theta}^{1+s_{ij}}}\geq|\xi|^{s_{ij}}\int_0^{\frac\pi2} \big(\tilde{\theta}^2\wedge1\big)\frac{d\tilde{\theta}}{\tilde{\theta}^{1+s_{ij}}}\geq\frac{|\xi|^{s_{ij}}}{2-s_{ij}}\,,\quad \text{for}\quad |\xi|\geq1\,.
$$
Meanwhile, when $|\xi|\leq1$ the integral is estimated as
$$
|\xi|^{s_{ij}}\int_0^{\frac\pi2|\xi|} \big(\tilde{\theta}^2\wedge1\big)\frac{d\tilde\theta}{\tilde{\theta}^{1+s_{ij}}}\geq|\xi|^{s_{ij}}\int_0^{|\xi|} \big(\tilde{\theta}^2\wedge1\big)\frac{d\tilde{\theta}}{\tilde{\theta}^{1+s_{ij}}} = \frac{|\xi|^{2}}{2-s_{ij}}\,.
$$
This concludes the proof with the constant
$$K_{ij} = \Big(\frac{m_j}{m_i+m_j}\Big)^2\frac{\kappa^1_{ij}}{2-s_{ij}}\, C_g\,|\S|\,.$$
\end{proof} 
Corollary \ref{cor-coercive} and Lemma \ref{Kij} readily prove the main coercive estimate.
\begin{prop}[Coercivity estimate]\label{Propcoercive}
Let $f \in L^2(\R^3)$ and $g\in(L^1_{1}\cap L\log L)(\R^3)$ with $g\geq0$.  Then,
\begin{align*}
\int_{\mathbb{R}^{2N}}&\int_{\mathcal{S}^2}b_{ij}(\widehat{u}\cdot\sigma)g(v_*)\big( f(v'_{ij})-f(v) \big)^2dvdv_*d\sigma \geq \frac{K_{ij}}{(2\pi)^3}\int_{\mathbb{R}^3}|\mathcal{F}(f)(\xi)|^2\,\big(|\xi|^2 \wedge |\xi|^{s_{ij}} \big)\,d\sigma d\xi \,,
\end{align*}
with constant $K_{ij}$ given in Lemma \ref{Kij}.  And, as a consequence, it holds that
 \begin{align*}
\int_{\mathbb{R}^{2N}}&\int_{\mathcal{S}^2}b_{ij}(\widehat{u}\cdot\sigma)g(v_*)\big( f(v'_{ij})-f(v) \big)^2dvdv_*d\sigma \geq \frac{K_{ij}}{(2\pi)^3}\Big(2^{-\frac{s_{ij}}{2}}\| f \|_{ H^{\frac{s_{ij}}{2}}} - \|f\|_{L^{2}}\Big) \,.
\end{align*}
\end{prop}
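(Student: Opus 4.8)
The plan is to obtain the statement by directly chaining the two results that precede it. First I would apply Corollary~\ref{cor-coercive} to the given $f\in L^2(\R^3)$ and $g\ge0$, which represents the Dirichlet form in Fourier variables and yields
\[
\int_{\R^3\times\R^3}\int_{\mathcal{S}^2}b_{ij}(\widehat{u}\cdot\sigma)\,g(v_*)\big(f(v'_{ij})-f(v)\big)^2\,dv\,dv_*\,d\sigma
\ \ge\ \frac{1}{(2\pi)^3}\int_{\R^3}|\mathcal{F}(f)(\xi)|^2\,\Phi_{ij}(\xi)\,d\xi,
\]
where $\Phi_{ij}(\xi):=\int_{\mathcal{S}^2}b_{ij}\big(\tfrac{\xi}{|\xi|}\cdot\sigma\big)\big[\mathcal{F}(g)(0)-|\mathcal{F}(g)(\xi^-_{ij})|\big]\,d\sigma\ge0$. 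Then I would invoke Lemma~\ref{Kij}, which, thanks to $g\in L^1_1\cap L\log L$, $g\ge0$, and the angular singularity assumption \eqref{B2}, provides the pointwise lower bound $\Phi_{ij}(\xi)\ge K_{ij}\,\big(|\xi|^2\wedge|\xi|^{s_{ij}}\big)$ with the explicit constant $K_{ij}$ of that lemma. Substituting this into the previous display gives the first asserted inequality at once.

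For the second inequality I would only need the elementary pointwise comparison
\[
|\xi|^2\wedge|\xi|^{s_{ij}}\ \ge\ 2^{-s_{ij}/2}\,(1+|\xi|^2)^{s_{ij}/2}-1,\qquad \xi\in\R^3 ,
\]
which I would verify by splitting on $|\xi|\le1$ and $|\xi|\ge1$: in the first range the left side equals $|\xi|^2$ (since $s_{ij}<2$) and the claim reduces to $(1+|\xi|^2)^{1-s_{ij}/2}\ge1\ge2^{-s_{ij}/2}$; in the second range the left side equals $|\xi|^{s_{ij}}$ and one uses $(1+|\xi|^2)^{s_{ij}/2}\le 2^{s_{ij}/2}|\xi|^{s_{ij}}$. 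Multiplying by $|\mathcal{F}(f)(\xi)|^2$, integrating in $\xi$, and using Plancherel together with the convention $\|f\|^2_{H^{s_{ij}/2}}=\int_{\R^3}(1+|\xi|^2)^{s_{ij}/2}|\mathcal{F}(f)(\xi)|^2\,d\xi$, one gets $\int_{\R^3}(|\xi|^2\wedge|\xi|^{s_{ij}})|\mathcal{F}(f)(\xi)|^2\,d\xi\ge 2^{-s_{ij}/2}\|f\|^2_{H^{s_{ij}/2}}-\|f\|^2_{L^2}$; combining with the first inequality (carrying the factor $K_{ij}/(2\pi)^3$) yields the consequence.

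I do not expect any real obstacle here: the two substantial ingredients --- the Fourier-side identity for the Dirichlet form (Lemma~\ref{P1} and Corollary~\ref{cor-coercive}, built on Bobylev's identity, Lemma~\ref{P0}) and the spectral lower bound $\Phi_{ij}(\xi)\gtrsim|\xi|^2\wedge|\xi|^{s_{ij}}$ (Lemma~\ref{Kij}, itself resting on \cite[Lemma~3]{ADVW}) --- are already in place, so what remains is the routine pointwise inequality above and bookkeeping of constants. The only small points deserving attention are keeping track of the paper's normalization of the $H^{s_{ij}/2}$ norm and noting that the two norms on the right-hand side of the consequence appear squared.
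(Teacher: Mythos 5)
Your proposal is correct and is essentially the paper's own argument: the paper gives no separate proof, stating only that Corollary \ref{cor-coercive} and Lemma \ref{Kij} ``readily prove'' the proposition, and your chaining of the two plus the elementary pointwise bound $|\xi|^2\wedge|\xi|^{s_{ij}}\ge 2^{-s_{ij}/2}(1+|\xi|^2)^{s_{ij}/2}-1$ is exactly the intended filling-in. Your observation that the norms on the right-hand side of the second inequality should appear squared is also correct; the unsquared norms in the stated proposition are a typo.
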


\end{document}